\setlist{leftmargin=5.5mm}
\newtheorem{theorem}{Theorem}[section]
\theoremstyle{definition}
\newtheorem{definition}[theorem]{Definition}
\newtheorem{lemma}[theorem]{Lemma}
\newtheorem{proposition}[theorem]{Proposition}
\newtheorem{corollary}[theorem]{Corollary}
\theoremstyle{definition}
\newtheorem{example}[theorem]{Example}
\newtheorem{algorithm}[theorem]{Algorithm}
\newtheorem{conjecture}[theorem]{Conjecture}
\newcommand{\Seq}{\textsc{Seq}}
\newcommand{\USeq}{\textsc{USeq}}
\newcommand{\am}{\mathrm{m}} 
\renewcommand{\*}[1]{\mathbf{#1}}
\title{$\mathbf{\textit{k}}$-NIM Trees: Characterization and Enumeration\footnote{This work was carried out during the summer of 2021 and was supported by the NSF Grant DMS \#0751964.}}
\author{Charles R. Johnson\footnote{Department of Mathematics, College of William and Mary, 
Williamsburg, VA. crjohn@wm.edu}, George Tsoukalas\footnote{Department of Mathematics, Rutgers University, New Brunswick, NJ; \tt george.tsoukalas@rutgers.edu}, 
Greyson C. Wesley\footnote{Department of Mathematics, University of Notre Dame, Notre Dame, IN;
\tt gwesley@nd.edu},
Zachary Zhao\footnote{Department of Mathematics, Washington University in St. Louis, St. Louis, MO; \tt zlzhao@wustl.edu}}
\date{} 
\begin{document}

\maketitle

\begin{abstract}
    Among those real symmetric matrices whose graph is a given tree $T$, the maximum multiplicity $M(T)$ that can be attained by an eigenvalue is known to be the path cover number of $T$. We say that a tree is $k$-NIM if, whenever an eigenvalue attains a multiplicity of $k-1$ less than the maximum multiplicity, all other multiplicities are $1$. $1$-NIM trees are known as NIM trees, and a characterization for NIM trees is already known. Here we provide a graph-theoretic characterization for $k$-NIM trees for each $k\geq 1$, as well as count them. It follows from the characterization that $k$-NIM trees exist on $n$ vertices only when $k=1,2,3$. In case $k=3$, the only $3$-NIM trees are simple stars.
\end{abstract}

\noindent \textbf{Keywords:} $k$-NIM trees, NIM (no intermediate multiplicities), Caterpillar, Parter-Wiener, etc. Theory, Enumeration, Generating function, Symbolic method, Singularity analysis, Real symmetric matrices, Tree

\noindent \textbf{MSC 2022:} 05C50 (primary), 05C50 (primary), 15B57 (secondary), 15A18 (secondary)



\section{Introduction}
Let $T$ be a tree on $n$ vertices. We consider the set of real symmetric matrices associated with $T$, denoted $\mathcal{S}(T)$, in which $A=(a_{i j}) \in \mathcal{S}(T)$ if and only if $a_{i j} \neq 0$ for each edge $\{i, j\}$ of $T$, $i \neq j$. We are interested in the set of unordered eigenvalue multiplicity lists of matrices in $\mathcal{S}(T)$, called the \textit{catalog} of $T$, denoted $\mathcal{L}(T)$. An unordered multiplicity list lists, in descending order, the multiplicities of the eigenvalues of a matrix. The order of the multiplicities does not depend on the numerical values of the underlying eigenvalues.

There has been considerable study of the possible unordered multiplicity lists of real symmetric matrices in $\mathcal{S}(T)$. Given a matrix $A \in \mathcal{S}(T)$ and a real number $\lambda$, denote by $\am_{A}(\lambda)$ the multiplicity of $\lambda$ in $A$ (since $A \in \mathcal{S}(T)$, algebraic and geometric multiplicity are the same). Given a tree $T$ on $n$ vertices and a set $\alpha \subseteq\{1, \ldots, n\}$, denote by $A[\alpha]$ the principal submatrix of $A$ resulting from including only the rows and columns of $A$ indexed by $\alpha$. Similarly, denote by $A(\alpha)$ the principal submatrix resulting from excluding the rows and columns of $A$ indexed by $\alpha$. If $v$ is a vertex of $T$, then $A[v]$ and $A(v)$ are defined analogously. We call a vertex of degree $3$ or higher a \textit{high degree vertex}, or HDV.

Given a matrix $A$ whose graph is $T$, a vertex is called \textit{Parter} for an eigenvalue $\lambda$ if $\am_{A(v)}(\lambda)=\am_{A}(\lambda)+1$. Similar, a \textit{Parter set} is a set of vertices $Q$ such that $\am_{A(Q)}(\lambda)=\am_{A}(\lambda)+|Q|$. The Parter-Wiener, etc. Theorem, which reveals that all multiple eigenvalues (and some eigenvalues of multiplicity 1) have at least one Parter vertex. Note that the removal of a vertex can never change the multiplicity by more than $1$ because of the Cauchy interlacing theorem.

\begin{theorem}\label{parterwiener}

\cite{johnson_duarte_saiago_2003}
Let $T$ be a tree, $A\in\mathcal{S}(T)$ and suppose there is a vertex $v$ of $T$ and a real number $\lambda$ such that $\lambda\in\sigma(A)\cap\sigma(A(v))$. Then

\begin{enumerate}
\item there is a vertex $u$ of $T$ such that $\am_A(\lambda)=\am_A(\lambda)+1$, i.e., $u$ is Parter for $\lambda$ (with respect to $A$ and $T$);

\item if $\am_A(\lambda)\geq 2$, then the prevailing hypothesis is automatically satisfied and $u$ may be chosen so that $\deg_T(u)\geq 3$ and so that there are at least three components $T_1$, $T_2$, and $T_3$ of $T-u$ such that $\am_{A[T_i]}(\lambda)\geq 1$, $i=1,2,3$; and 

\item $\am_A(\lambda)=1$, then $u$ may be chosen so that $\deg_T(u)\geq 2$ so that there are two components $T_1$ and $T_2$ of $T-u$ such that $\am_{A[T_i]}(\lambda)=1$, $i=1,2$.
\end{enumerate}
\end{theorem}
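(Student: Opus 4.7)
The plan is to prove the three parts by induction on $|V(T)|$, with the core tool being the block-diagonal decomposition $A(v)\cong\bigoplus_i A[T_i]$ over the components $T_1,\dots,T_k$ of $T-v$, giving $\am_{A(v)}(\lambda)=\sum_i \am_{A[T_i]}(\lambda)$, together with Cauchy interlacing, which forces $|\am_{A(w)}(\mu)-\am_A(\mu)|\leq 1$ for every vertex $w$ and eigenvalue $\mu$.

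For part (1), given the hypothesis vertex $v$, either $v$ is already Parter or $\am_{A(v)}(\lambda)\leq \am_A(\lambda)$. In the latter case some component $T_i$ of $T-v$ satisfies $\am_{A[T_i]}(\lambda)\geq 1$; restricting attention to the subtree on $T_i\cup\{v\}$ (a strictly smaller tree, unless $T-v$ is connected and large, in which case one instead shrinks along a carefully chosen neighbor edge), I would set up the hypothesis of the theorem on a smaller tree and invoke the inductive conclusion to obtain a Parter vertex there, then verify via the block-diagonal description that the same vertex is Parter for $A$ in $T$.

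For the automatic hypothesis in part (2), take two linearly independent $\lambda$-eigenvectors $x,y$ of $A$; for any fixed vertex $v$ choose scalars $\alpha,\beta$, not both zero, with $(\alpha x+\beta y)_v=0$. The restriction of $\alpha x+\beta y$ to $T-v$ is a nonzero $\lambda$-eigenvector of $A(v)$: the equation $A(\alpha x+\beta y)=\lambda(\alpha x+\beta y)$ read at any neighbor of $v$ involves the $v$-entry only through a term killed by the chosen zero, so the remaining equations give exactly $A(v)\cdot(\text{restriction})=\lambda\cdot(\text{restriction})$. Thus $\lambda\in\sigma(A(v))$ for every $v$, so the hypothesis of part (1) is automatically met.

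To upgrade the Parter vertex of part (2) to a HDV with three branches each carrying $\lambda$, I would argue extremally: among all Parter vertices for $\lambda$ pick one, $u$, minimizing (say) the size of the smallest component of $T-u$ whose local restriction carries $\lambda$. If fewer than three branches of $u$ carry $\lambda$, then since $\am_{A(u)}(\lambda)=\am_A(\lambda)+1\geq 3$ some branch $T_i$ would satisfy $\am_{A[T_i]}(\lambda)\geq 2$; the induction hypothesis applied inside $T_i$ yields a Parter vertex in $T_i$ that, via the block-diagonal structure, transfers to a Parter vertex of $T$ with strictly smaller extremal measure, a contradiction. The main obstacle is this transfer step: carefully verifying that an internal Parter vertex in $T_i$ remains Parter globally (the other branches, together with $u$, must be shown not to absorb the jump) and that the chosen measure strictly decreases. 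Part (3) is parallel: only two branches of multiplicity $1$ are required, so the degree requirement weakens from $\geq 3$ to $\geq 2$ and the extremal step is correspondingly simpler.
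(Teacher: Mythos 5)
This is the Parter--Wiener theorem, which the paper cites from the Johnson--Leal Duarte--Saiago reference and does not prove; there is therefore no in-paper proof to compare against, so I will assess your argument on its own terms. Your eigenvector argument for the ``automatic hypothesis'' in part (2) is correct and standard: if $\am_A(\lambda)\geq 2$, then for any vertex $v$ a linear combination of two independent $\lambda$-eigenvectors can be made to vanish at $v$, and its restriction to $T-v$ is a $\lambda$-eigenvector of $A(v)$. That piece is fine.

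The rest of the sketch has genuine gaps. First, your inductive setup for part (1) is not sound as stated: restricting to $T_i\cup\{v\}$ does not obviously place you in the hypothesis of the theorem, because interlacing only gives $\am_{A[T_i\cup\{v\}]}(\lambda)\geq \am_{A[T_i]}(\lambda)-1$, which may be $0$, so $\lambda$ need not lie in $\sigma(A[T_i\cup\{v\}])$. The parenthetical ``shrink along a carefully chosen neighbor edge'' is not an argument. Second, and most seriously, the transfer step --- that a Parter vertex $w$ for $\lambda$ inside a branch $T_i$ (relative to $A[T_i]$) is in fact Parter for $\lambda$ in $T$ --- is the genuine crux of the theorem and cannot be obtained from the block decomposition $A(v)\cong\bigoplus_i A[T_i]$ and Cauchy interlacing alone: interlacing bounds $|\am_{A(w)}(\lambda)-\am_A(\lambda)|\leq 1$ but gives no control over the \emph{sign} of the change. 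The standard route is the branch formula for the characteristic polynomial at $v$, i.e.\ $p_A(\lambda)=\left(\prod_i p_{A[T_i]}(\lambda)\right)\!\left((a_{vv}-\lambda)-\sum_i a_{vv_i}^2\,p_{A[T_i-v_i]}(\lambda)/p_{A[T_i]}(\lambda)\right)$, and a careful order-of-vanishing analysis; you flag the transfer as ``the main obstacle'' but leave it unresolved, so the proof is incomplete. Third, your extremal measure --- smallest component of $T-u$ carrying $\lambda$ --- does not obviously strictly decrease when you pass from $u$ to a Parter vertex $w$ inside the branch $T_j$: the new ``small'' components of $T-w$ live inside $T_j$ and may well be larger than a small $\lambda$-carrying branch at $u$ that lay outside $T_j$. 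Some other well-founded descent (for instance, induction on $|T|$ itself with the transfer lemma doing the heavy lifting, as in the cited reference) is needed.
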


We say a vertex $i$ is a \textit{strong Parter} vertex of $T$ for $\lambda$ relative to $A$ (a strong Parter vertex, for short) if $v$ has degree at least $3$ in $T$ such that $\am_{A(i)}(\lambda)=\am(\lambda)+1$ and $\lambda$ occurs as an eigenvalue in direct summands of $A(i)$ that correspond to at least three branches of $T$ at $i$.

The \emph{maximum multiplicity} $M(T)$ is the largest entry of any list in the catalog $\mathcal{L}(T)$. It is known that $M(T)$ is equal to the \emph{path cover number} $P(T)$, the minimum number of vertex disjoint induced paths that contain all vertices of $T$ (Theorem 3.4.3 of \cite{johnson_saiago_2018}). Together with Theorem \ref{parterwiener}, this information implies the following two results:

\begin{theorem}[Theorem 4.2.3 from \cite{johnson_saiago_2018}] \label{monograph423}
Let $T$ be a tree, $A\in\mathcal{S}(T)$ and $\lambda$ be an eigenvalue of $A$. Then there is a vertex $v$ of $T$ such that $\lambda \in \sigma(A) \cap \sigma(A(v))$ if and only if there is a Parter set $Q$ of cardinality $k \geq 1$ such that $\lambda$ is an eigenvalue of $\am_A(\lambda) + k$ direct summands of $A(Q)$ (with multiplicity 1 in each). Moreover, if vertex $v$ above is Parter for $\lambda$, then such a Parter set $Q$ may be constructed so that $v \in Q$.
\end{theorem}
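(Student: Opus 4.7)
The plan is to prove the biconditional by treating each direction separately, then handle the ``moreover'' clause by starting the forward construction at $v$.

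For the $(\Leftarrow)$ direction I would argue directly. Suppose $Q$ is a Parter set for $\lambda$ with $|Q| = k \geq 1$ satisfying the multiplicity condition. Pick any $v \in Q$. Since $A(Q) = A(v)(Q\setminus\{v\})$, iterated Cauchy interlacing gives
\[
\am_{A(v)}(\lambda) \;\geq\; \am_{A(Q)}(\lambda) - (k-1) \;=\; \am_A(\lambda) + 1 \;\geq\; 1,
\]
so $\lambda \in \sigma(A)\cap\sigma(A(v))$, as required.

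For the $(\Rightarrow)$ direction the strategy is iterative augmentation. Theorem \ref{parterwiener}(1) produces a Parter vertex $u$ for $\lambda$; to secure the ``moreover'' clause, if a specific Parter vertex $v$ is prescribed then start with $u = v$. Initialize $Q := \{u\}$, which is Parter by construction. At each stage write $A(Q) = \bigoplus_i A[T_i]$ where $\{T_i\}$ are the components of $T - Q$. If every summand has $\am_{A[T_i]}(\lambda) \leq 1$, stop; otherwise choose one summand with $\am_{A[T_i]}(\lambda) \geq 2$ and apply Theorem \ref{parterwiener}(2) inside the smaller tree $T_i$ to obtain a Parter vertex $u' \in T_i$ for $\lambda$ relative to $A[T_i]$. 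Replace $Q$ by $Q\cup\{u'\}$ and iterate.

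Two invariants drive the argument. First, $Q$ remains Parter after each update: adding $u'$ replaces $A[T_i]$ in the direct sum decomposition by $A[T_i - u'] = A[T_i](u')$, which increases $\am_{\cdot}(\lambda)$ by exactly one, matching the growth of $|Q|$ by one. Second, the procedure terminates, because the quantity $\sum_{i\,:\,\am_{A[T_i]}(\lambda)\geq 2} |V(T_i)|$ strictly decreases at each step: the chosen $T_i$ contributes $|V(T_i)|$ before the step and is replaced afterward by subtrees whose total vertex count is $|V(T_i)| - 1$. Upon termination every summand of $A(Q)$ contains $\lambda$ with multiplicity $0$ or $1$, so the total multiplicity $\am_A(\lambda) + |Q|$ is distributed one-per-summand among exactly $\am_A(\lambda) + |Q|$ summands, yielding the required property.

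The most delicate point I anticipate is verifying that Theorem \ref{parterwiener} actually applies to the intermediate subtrees $T_i$, since its stated hypothesis demands $\lambda \in \sigma(A[T_i])\cap\sigma(A[T_i](w))$ for some $w$, not merely $\am_{A[T_i]}(\lambda) \geq 2$. The resolution is furnished by part (2) of Theorem \ref{parterwiener} itself, which asserts that this ``prevailing hypothesis'' is automatic once the multiplicity is at least $2$, and in fact produces a strong Parter vertex with three components of $A[T_i](u')$ containing $\lambda$. Once this is in place, the remainder of the argument is bookkeeping in direct sums.
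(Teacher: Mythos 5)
The paper does not prove this statement; it is quoted verbatim as Theorem~4.2.3 from the Johnson--Saiago monograph and used as a black box, so there is no in-paper proof to compare your argument against. Judged on its own merits, your proof is correct and follows the standard route one would take here. The $(\Leftarrow)$ direction by iterated interlacing is fine: from $\am_{A(Q)}(\lambda)=\am_A(\lambda)+k$ and the fact that deleting one vertex changes multiplicity by at most one, any $v\in Q$ satisfies $\am_{A(v)}(\lambda)\geq\am_A(\lambda)+1\geq 1$. The $(\Rightarrow)$ direction by iterated application of Theorem~\ref{parterwiener} is also sound: each augmentation preserves the Parter-set equality $\am_{A(Q)}(\lambda)=\am_A(\lambda)+|Q|$ because the added vertex $u'$ is Parter for $\lambda$ \emph{inside} the summand $A[T_i]$, and your potential function $\sum_{\am_{A[T_i]}(\lambda)\geq 2}|V(T_i)|$ does strictly decrease (an even quicker termination bound is $|Q|\leq (n-\am_A(\lambda))/2$, since $A(Q)$ has order $n-|Q|$ and must carry $\lambda$ with multiplicity $\am_A(\lambda)+|Q|$). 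You correctly flagged and resolved the one genuinely delicate point, namely that part~(2) of Theorem~\ref{parterwiener} already asserts the standing hypothesis is automatic once $\am_{A[T_i]}(\lambda)\geq 2$, so the recursion into subtrees is legitimate; and seeding the iteration with $u=v$ when $v$ is itself Parter cleanly delivers the ``moreover'' clause.
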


\begin{theorem}[Theorem 4.2.9 from \cite{johnson_saiago_2018}]\label{monograph429} Let $T$ be a tree, $A\in\mathcal{S}(T)$ and $\lambda$ be a multiple eigenvalue of $A$. There is a fragmenting Parter set for $\lambda$ relative to $A$ whose elements are all HDVs. Moreover, if $Q$ is a fragmenting Parter set of minimum cardinality for a multiple eigenvalue, then each vertex of $Q$ is an HDV.
\end{theorem}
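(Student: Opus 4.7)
The plan is to treat both claims of the theorem at once by establishing the ``moreover'' clause (minimum cardinality forces HDVs); the first claim then follows immediately by taking a minimum fragmenting Parter set. First I would confirm that fragmenting Parter sets for $\lambda$ exist at all: since $\lambda$ is a multiple eigenvalue, Theorem~\ref{parterwiener}(2) produces an HDV strong Parter vertex $u$, so $\lambda \in \sigma(A) \cap \sigma(A(u))$, and then Theorem~\ref{monograph423} guarantees a fragmenting Parter set. Let $Q$ be such a set of minimum cardinality; showing every vertex of $Q$ is an HDV then establishes both statements simultaneously.

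For the minimality claim, I would argue by contradiction. Assume some $v \in Q$ has $\deg_T(v) \leq 2$, and set $Q' = Q \setminus \{v\}$. Label the components of $T - Q'$ as $D_1, \ldots, D_m$ with $v \in D_1$; for $i \geq 2$, the vertex $v$ is not in $D_i$, so $D_i$ is also a component of $T - Q$. The remaining components of $T - Q$ are the components of $D_1 - v$, of which there are at most two---call them $E_1, E_2$---because $\deg_{D_1}(v) \leq \deg_T(v) \leq 2$. Let $\ell \in \{0,1,2\}$ count the $E_j$ on which $\lambda$ is an eigenvalue, and let $k$ count the $D_i$ with $i \geq 2$ on which $\lambda$ is an eigenvalue; by the fragmenting property of $Q$ each such eigenvalue has multiplicity exactly $1$, and $k + \ell = \am_A(\lambda) + |Q|$.

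The crux is to pin down $\am_{A[D_1]}(\lambda)$ via two applications of Cauchy interlacing. Removing $v$ from $A[D_1]$ gives $\am_{A[D_1]}(\lambda) \geq \am_{A[D_1](v)}(\lambda) - 1 = \ell - 1$. On the other hand, removing $Q'$ from $A$ yields $\am_{A(Q')}(\lambda) \leq \am_A(\lambda) + |Q'|$; combined with the decomposition $\am_{A(Q')}(\lambda) = k + \am_{A[D_1]}(\lambda)$ (since each $D_i$ with $i \geq 2$ contributes $0$ or $1$ to the multiplicity), this rearranges to $\am_{A[D_1]}(\lambda) \leq \ell - 1$. Hence $\am_{A[D_1]}(\lambda) = \ell - 1$. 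The case $\ell = 0$ is immediately absurd since multiplicities are nonnegative, and in the cases $\ell = 1$ and $\ell = 2$ a direct count shows that $Q'$ is itself a fragmenting Parter set for $\lambda$ of cardinality $|Q| - 1$, contradicting the minimality of $Q$.

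The main obstacle I anticipate is the bookkeeping in the two live cases $\ell \in \{1,2\}$: verifying carefully that $\lambda$ appears in exactly $\am_A(\lambda) + |Q'|$ summands of $A(Q')$, each with multiplicity $1$, so that $Q'$ truly qualifies as fragmenting. Since $\am_{A[D_1]}(\lambda) = \ell - 1 \in \{0,1\}$, the ``multiplicity $1$ in each summand'' clause will hold automatically, and the summand count should match; once that accounting is nailed down, the contradiction is immediate and both parts of the theorem follow.
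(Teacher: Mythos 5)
The paper does not prove this statement: it is imported verbatim as Theorem~4.2.9 of \cite{johnson_saiago_2018}, so there is no internal proof to compare against, and your argument has to stand on its own. It does. Reducing everything to the ``moreover'' clause and then deleting a low-degree vertex from a minimum fragmenting Parter set is exactly the right strategy, and the two interlacing bounds correctly pin down $\am_{A[D_1]}(\lambda)=\ell-1$: the lower bound because $A[D_1](v)$ is the direct sum of the $E_j$, each carrying $\lambda$ with multiplicity $0$ or $1$ (the designated $\am_A(\lambda)+|Q|$ summands of $A(Q)$ already exhaust $\am_{A(Q)}(\lambda)$, so $\lambda$ is absent from all other summands --- worth stating explicitly, since it also justifies the decomposition $\am_{A(Q')}(\lambda)=k+\am_{A[D_1]}(\lambda)$); the upper bound by iterated interlacing over $Q'$. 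For $\ell\in\{1,2\}$ the count $\am_{A(Q')}(\lambda)=k+(\ell-1)=\am_A(\lambda)+|Q'|$ does show $Q'$ is again a fragmenting Parter set, with multiplicity $1$ per summand automatic since $\am_{A[D_1]}(\lambda)\le 1$. The one loose end is $|Q|=1$: then $Q'=\varnothing$ is not an admissible fragmenting Parter set and the minimality contradiction is unavailable; but in that case $m=1$ and $k=0$, so your identity $k+\ell=\am_A(\lambda)+|Q|$ forces $\ell=\am_A(\lambda)+1\ge 3$, contradicting $\ell\le 2$ directly. Add that sentence and the proof is complete.
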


We call a Parter set $Q$, as guaranteed by \ref{monograph423}, a \emph{fragmenting Parter set} of vertices of $T$ relative to $A$ (a fragmenting Parter set, for short).

Theorem \ref{parterwiener} is also key to giving eigenvalue assignments for multiplicity lists.

\begin{definition}[Assignment] Let $T$ be a tree on $n$ vertices and let $(p_1,p_2,\dots,p_k,(1)^{n-\sum_{i=1}^n p_i})$ be a non-increasing list of positive integers, with $\sum_{i=1}^{k} p_{i} \leq n$. Then, an \emph{assignment} $\mathcal{A}$ is a collection $\mathcal{A}=\left\{\mathcal{A}_{1}, \ldots, \mathcal{A}_{k}\right\}$ of $k$ collections $\mathcal{A}_{i}$ of subtrees of $T$, corresponding to eigenvalues with multiplicities $\am_{i}(A)$, with the following properties:

\begin{itemize}
\item[(1)] (Specification of Parter vertices) For each $i$, there exists a set $V_{i}$ of vertices of $T$ such that
\begin{itemize} 
\item[(1a)]each subtree in $\mathcal{A}_{i}$ is a connected component of $T-V_{i}$, 
\item[(1b)] $\left|\mathcal{A}_{i}\right|=p_{i}+\left|V_{i}\right|$, and 
\item[(1c)] for each vertex $v \in V_{i}$, there exists a vertex $x$ adjacent to $v$ such that $x$ is in one of the subtrees in $\mathcal{A}_{i}$.
\end{itemize}
\item[(2)] (No overloading) We require that no subtree $S$ of $T$ is assigned more than $|S|$ eigenvalues; define $c_{i}(S)=\left|\mathcal{A}_{i} \cap \mathcal{Z}(S)\right|-\left|V_{i} \cap S\right|$, the difference between the number of subtrees contained in $S$ and the number of Parter vertices in $S$ for the $i$th multiplicity. Then we require that $\sum_{i=1}^{k} \max \left(0, c_{i}(S)\right) \leq|S|$ for each $S \in \mathcal{Z}(T)$. If this condition is violated at any subtree, then that subtree is said to be \emph{overloaded}.

\underline{Notation}: If $V$ is a set of vertices and $G$ is a graph, then $V\cap G$ denotes the set of vertices in both $V$ and $G$. Additionally, if $T$ is a tree, we let $\mathcal{Z}(T)$ denote the collection of all subtrees of $T$, including $T$, rather than the power set of the vertices in $T$.
\end{itemize} 
We also refer to the $i$th eigenvalue as being ``assigned'' to each subtree in $\mathcal{A}_{i}$.
\end{definition}

(Further developments of the use of assignments, and some variations of an assignment, may be found in \cite{johnson_jordan-squire_sher_2010}.)

In practice, the usage of assignments is simpler than the definition suggests. We record the full definition for completeness. We also make use of some related concepts:

\begin{definition}[Assignment Candidate/Near-Assignment]
An \emph{assignment candidate} is a collection of vertices and components satisfying condition (1), but not necessarily (2). Similarly, a \emph{near-assignment} is a collection of vertices and components satisfying conditions (1a), (1b), and (2), but not necessarily (1c). We also define a near-assignment candidate to be a similar collection satisfying (1a) and (1b) but not necessarily (1c) or (2).
\end{definition}

Given a list $p=(p_{1}, p_{2}, \dots, p_{k}, 1^{n-\sum_{i=1}^{k} p_{i}})$, we call an assignment $\mathcal{A}$ of $p$ for a tree $T$ \textit{realizable} if there exists a matrix $B \in \mathcal{S}(T)$ with multiplicity list $(p_{1}, p_{2}, \dots, p_{k}, 1^{n-\sum_{i=1}^{k} p_{i}})$ and eigenvalues $\lambda_{1}, \lambda_{2}, \dots, \lambda_{k}$ corresponding to the $p_{i}$, such that the following is true for each $i$ between 1 and $k$ :
(1) For each subtree $R$ of $T$ in $\mathcal{A}_{i}, \lambda_{i}$ is a multiplicity 1 eigenvalue associated with $R$. Also, for each connected component $Q$ of $T-V_{i}$ that is not in $\mathcal{A}_{i}$, $\lambda_{i}$ is not an eigenvalue of $B[Q]$.
(2) For each vertex $v$ in $V_{i}, v$ is Parter for $\lambda_{i}$.
(3) All eigenvalues of $B$ other than the $\lambda_{i}$ have multiplicity 1 .
In this case, we also call the multiplicity list $(p_{1}, p_{2}, \dots, p_{k}, 1^{n-\sum_{i=1}^{k} p_{i}})$ realizable.

\begin{theorem}[from \cite{fernandes_2015}]\label{singlehighmultassignment}
Every valid assignment for one (high multiplicity) eigenvalue in $T$ is realizable. 
\end{theorem}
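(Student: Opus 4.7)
My plan is to construct $B \in \mathcal{S}(T)$ witnessing realizability by first populating the components of $T - V_1$ with appropriate local matrices, and then carefully filling in the diagonal entries and edge weights at the vertices of $V_1$. For each $R \in \mathcal{A}_1$ I would choose $B_R \in \mathcal{S}(R)$ having $\lambda_1$ as a simple eigenvalue, and for each component $Q$ of $T - V_1$ with $Q \notin \mathcal{A}_1$ I would choose $B_Q \in \mathcal{S}(Q)$ with $\lambda_1 \notin \sigma(B_Q)$. I would further arrange that, apart from the shared copies of $\lambda_1$, the union of the local spectra is simple and pairwise disjoint; this is standard since every tree supports a matrix realizing any prescribed simple eigenvalue together with an otherwise arbitrary simple spectrum.

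I would next specify the diagonal entry $a_v$ for each $v \in V_1$ and the weight $b_{v,u}$ of each edge between $V_1$ and $T - V_1$, producing $B$. The key tool is the standard recursion for the characteristic polynomial of a tree matrix at a cut-vertex: if $v$ is joined to branches $C_1, \ldots, C_r$ through neighbors $u_1, \ldots, u_r$, then
\[
p_B(x) \;=\; (a_v - x)\prod_{i=1}^{r} p_{C_i}(x) \;-\; \sum_{j=1}^{r} b_{v,u_j}^{\,2}\, p_{C_j - u_j}(x) \prod_{i \neq j} p_{C_i}(x).
\]
If $\lambda_1$ is a simple eigenvalue of exactly $s$ of the branches $C_i$, then $\prod_i p_{C_i}$ vanishes to order $s$ at $\lambda_1$ while each summand vanishes to order $s-1$, so for generic nonzero edge weights $p_B$ vanishes to order exactly $\max(s-1,0)$ at $\lambda_1$. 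I would process the vertices of $V_1$ in an order compatible with a rooting of $T$ at some vertex outside $V_1$ (reverse BFS), at each step attaching a Parter vertex to its already-assembled adjacent branches and applying this formula. The counting $|\mathcal{A}_1| = p_1 + |V_1|$ together with condition (1c) of the assignment is designed precisely to force each $v \in V_1$ to be adjacent to enough $\lambda_1$-bearing assembled branches to become Parter, so the multiplicity of $\lambda_1$ drops by exactly one at each stage and terminates at $p_1$.

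The main obstacle is verifying rigorously that this inductive step always finds the needed count of $\lambda_1$-bearing branches at each Parter vertex. This reduces to a bookkeeping argument on the subtree structure induced by $V_1$: condition (1c) supplies at least one adjacent $\mathcal{A}_1$-subtree per Parter vertex, and the global count $|\mathcal{A}_1| - |V_1| = p_1 \geq 1$ redistributes the remaining $\mathcal{A}_1$-subtrees among the assembled branches via a standard induction on the subtree of $V_1$ rooted at the processing root. Once this combinatorial claim is in place, the final issue of simplicity of the non-$\lambda_1$ part of $\sigma(B)$ is handled by a dimension count: the locus in the parameter space $(a_v, b_{v,u})$ where two non-$\lambda_1$ eigenvalues coincide is a proper Zariski-closed subset, so a generic choice of the free parameters completes the realization. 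The only substantive ingredient beyond the recursion above is the Parter--Wiener Theorem \ref{parterwiener} applied in reverse, which guarantees that once $s \geq 2$ adjacent branches carry $\lambda_1$, no tuning of $a_v$ or the $b_{v,u_j}$ can prevent $v$ from being Parter.
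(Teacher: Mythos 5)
First, a caveat: the paper offers no proof of this statement—it is quoted from \cite{fernandes_2015}—so there is no in-paper argument to compare against. Your architecture (seed the components of $T-V_1$ with branch matrices, reattach the vertices of $V_1$ via the cut-vertex recursion for the characteristic polynomial, then clean up the residual spectrum by genericity) is the standard route to realizability results of this kind, so the plan is reasonable. But as written it has a genuine gap at its central step.

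The gap: it is not true that ``each summand vanishes to order $s-1$,'' nor that generic choices of $a_v$ and the $b_{v,u_j}$ therefore force $p_B$ to vanish to order exactly $s-1$. The $j$th summand is $b_{v,u_j}^{2}\,p_{C_j-u_j}(x)\prod_{i\neq j}p_{C_i}(x)$, and it vanishes to order $s-1$ only if $C_j$ carries $\lambda_1$ \emph{and} $p_{C_j-u_j}(\lambda_1)\neq 0$, equivalently (for $\lambda_1$ simple in $C_j$) only if the $\lambda_1$-eigenvector of $B[C_j]$ is nonzero at the attaching vertex $u_j$. If that entry vanishes for every $\lambda_1$-bearing branch at $v$, then every summand vanishes to order at least $s$, each branch eigenvector extends by zero to an eigenvector of the assembled matrix, $\am(\lambda_1)\geq s$, and $v$ is \emph{not} Parter for any tuning of $a_v$ and the edge weights—so your closing claim that $s\geq 2$ makes Parter-ness unavoidable is false. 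Worse, this failure mode is exactly what your induction produces: once $w\in V_1$ has been made Parter inside an assembled branch $C$, every $\lambda_1$-eigenvector of $B[C]$ vanishes at $w$, so if the next vertex $v\in V_1$ meets that branch only through $u_j=w$ (which happens whenever $v$ and $w$ are adjacent in $V_1$), that branch cannot supply the multiplicity drop at $v$. The repair is precisely condition (1c): each $v\in V_1$ has a neighbor lying in a fresh component $R\in\mathcal{A}_1$, whose matrix $B_R$ you are free to choose with its simple $\lambda_1$-eigenvector nonzero at that neighbor, and it is through that edge that the drop occurs. Your sketch names (1c) but never connects it to the eigenvector-support condition that actually makes the recursion close; that connection, carried through the induction, is the substance of the theorem and cannot be deferred to ``standard bookkeeping.''
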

\begin{lemma}[from \cite{johnson_jordan-squire_sher_2010}]\label{M2thm2pt5}

Given a tree $T$ on $n=p_{1}+p_{2}+l$ vertices, a multiplicity list $(p_{1}, p_{2}, 1^{l})$, a near-assignment of this list for $T$, and any distinct real numbers $\alpha$ and $\beta$, there exists an $A \in \mathcal{S}(T)$ which satisfies the following conditions:

If $R$ is a connected component of $T-V_{1}, \alpha$ is an eigenvalue of $R$ if and only if $R \in \mathcal{A}_{1}$. Similarly, if $S$ is a connected component of $T-V_{2}, \beta$ is an eigenvalue of $S$ if and only if $S \in \mathcal{A}_{2}$.
\end{lemma}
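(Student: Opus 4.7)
The plan is to combine the single-multiplicity realization result (Theorem \ref{singlehighmultassignment}) with a dimension-counting perturbation argument that preserves the $\alpha$-conditions while achieving the $\beta$-conditions. Since condition (1c) is not assumed for near-assignments, only the spectral equivalences on components need to be achieved, not the Parter structure of $V_1$ and $V_2$ themselves, so there is strictly more flexibility than in Theorem \ref{singlehighmultassignment}.

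First, I would invoke Theorem \ref{singlehighmultassignment} on the list $(p_1, 1^{n-p_1})$ and the assignment $\mathcal{A}_1$ to produce a starting matrix $A_0 \in \mathcal{S}(T)$ such that $\alpha$ is an eigenvalue of $A_0[R]$ iff $R \in \mathcal{A}_1$ for every component $R$ of $T - V_1$. I would then define $\mathcal{V} \subseteq \mathcal{S}(T)$ as the set of all matrices satisfying this $\alpha$-condition, and $\mathcal{W}$ as the analogous set for $\beta$ relative to $V_2$ and $\mathcal{A}_2$. These are constructible subsets cut out by the vanishing and non-vanishing of $\det(\lambda I - A[R])$ at $\lambda = \alpha$ (respectively $\beta$), and the lemma reduces to showing $\mathcal{V} \cap \mathcal{W} \neq \emptyset$.

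Second, I would use the classical tree recursion $\phi_T(x) = (x - a_v)\phi_{T-v}(x) - \sum_{w \sim v} b_{vw}^2 \phi_{T_w}(x)\phi_{T - v - T_w}(x)$ to show that the edge weights and diagonal entries of $A_0$ may be perturbed within $\mathcal{V}$ so as to independently adjust the value of $\phi_{A[R']}(\beta)$ for each component $R'$ of $T - V_2$. The no-overloading condition~(2) bounds the combined number of vanishing conditions at $\alpha$ and at $\beta$ imposed on any single subtree by its size, which is precisely the parameter count that prevents $\mathcal{V} \cap \mathcal{W}$ from being over-determined. In the base case $V_1 \cap V_2 = \emptyset$, the $\alpha$- and $\beta$-constraints decouple across the Parter vertices and the construction is essentially immediate from Theorem \ref{singlehighmultassignment} applied twice and then glued; I would handle the general case by induction on $|V_1 \cap V_2|$, peeling off a single shared Parter vertex at each step.

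The main obstacle is controlling the interaction between the $\alpha$- and $\beta$-conditions on the components of $T - (V_1 \cup V_2)$, particularly when $V_1 \cap V_2$ is non-empty or when a single component of $T - V_1$ is split into several components by removing $V_2 \setminus V_1$. In such cases the characteristic polynomial of $A[R]$ for $R \in \mathcal{A}_1$ couples the characteristic polynomials of its sub-components through the matrix entries indexed by $V_2 \setminus V_1$, so a perturbation tuning one sub-component may inadvertently destroy the vanishing or non-vanishing at $\beta$ on another sub-component contained in the same $R$. Verifying that enough free parameters remain, under only the no-overloading hypothesis, to simultaneously satisfy all $\alpha$-vanishings, all $\beta$-vanishings, and the requisite non-vanishings on components not in $\mathcal{A}_1$ or $\mathcal{A}_2$ is the delicate heart of the argument, and is where a careful leaf-based induction on $|V_1 \cup V_2|$ is likely to be cleaner than a direct dimension count.
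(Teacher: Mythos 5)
Note first that the paper does not prove this lemma at all: it is imported verbatim from \cite{johnson_jordan-squire_sher_2010}, so there is no in-paper proof to compare against. Evaluating your attempt on its own terms, there are genuine gaps. Your opening step invokes Theorem \ref{singlehighmultassignment} on $(V_1,\mathcal{A}_1)$, but that theorem requires a \emph{valid} assignment, i.e.\ one satisfying condition (1c). A near-assignment by definition may fail (1c), so $(V_1,\mathcal{A}_1)$ by itself need not be a valid assignment and the theorem does not directly produce your $A_0$. You would need an additional argument that (1c) can be restored (say, by deleting offending vertices of $V_1$ and merging components), and that this does not change the spectral condition you are trying to achieve; this is not addressed.

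The larger gap is that the central claim---that the no-overloading inequality on every subtree is exactly the parameter count that makes the $\alpha$- and $\beta$-vanishing conditions simultaneously attainable---is asserted rather than proved. The characteristic-polynomial recursion you quote gives an algebraic relation, not independence: the same diagonal or off-diagonal entry enters the recursions of several components at once precisely when $V_2\setminus V_1$ cuts a component of $T-V_1$ into pieces, which is the coupling you yourself flag. Showing that enough independent parameters survive under hypothesis (2) alone is essentially the content of the lemma, and you defer it to an induction on $|V_1\cup V_2|$ that you describe as ``likely to be cleaner'' rather than carry out. The base case is also not ``essentially immediate'': two applications of Theorem \ref{singlehighmultassignment} produce two unrelated matrices in $\mathcal{S}(T)$, and there is no gluing operation that merges them into a single matrix meeting both lists of conditions. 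What you have is a plausible outline of a perturbation-theoretic strategy, but the steps you acknowledge as delicate are exactly where the real proof lies, and they are left undone.
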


\begin{corollary}[from \cite{fernandes_2015}]\label{alltwoeigassignmentsarerealizable}
Any valid assignment for two (high multiplicity) eigenvalues of multiplicities $p_1$ and $p_2$ in $T$ is realizable.
\end{corollary}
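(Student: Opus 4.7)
The plan is to combine Lemma~\ref{M2thm2pt5} with the adjacency condition (1c), which is the one piece of the assignment definition that a near-assignment is allowed to drop. Since any valid assignment satisfies (1a), (1b), and (2) by definition, it is automatically a near-assignment, so for any choice of two distinct reals $\alpha \neq \beta$ I can invoke Lemma~\ref{M2thm2pt5} to produce $A \in \mathcal{S}(T)$ for which $\alpha$ is an eigenvalue of $A[R]$ exactly when $R \in \mathcal{A}_1$ (among components $R$ of $T - V_1$), and symmetrically for $\beta$, $V_2$, $\mathcal{A}_2$.

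The remaining work is to verify that the multiplicity list of $A$ is really $(p_1, p_2, 1^{n-p_1-p_2})$. First, reading Lemma~\ref{M2thm2pt5} with multiplicity~$1$ in each assigned component (after a mild generic perturbation of the free entries, if needed) gives $\am_{A(V_1)}(\alpha) = |\mathcal{A}_1| = p_1 + |V_1|$, and similarly for $\beta$. I would then use condition (1c) to upgrade this to the statement that $V_1$ is a Parter set for $\alpha$ relative to $A$: for each $v \in V_1$, some neighbor of $v$ lies in a subtree of $\mathcal{A}_1$, so $\alpha$ sits in a branch at $v$, and iteratively invoking Theorem~\ref{parterwiener} (or applying Theorem~\ref{monograph423} directly with $Q = V_1$) yields $\am_A(\alpha) = \am_{A(V_1)}(\alpha) - |V_1| = p_1$. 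The symmetric argument handles $\beta$. Condition (2) (no overloading) then prevents the remaining eigenvalues from accidentally coinciding with $\alpha$ or $\beta$ or with each other beyond what is forced, and one last generic perturbation that fixes $\alpha$ and $\beta$ ensures the other $n - p_1 - p_2$ eigenvalues are simple.

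The main obstacle I anticipate is the Parter-set upgrade, i.e., showing that as the vertices of $V_1$ are removed one-by-one the multiplicity of $\alpha$ strictly increases at each step so that the total jump is exactly $|V_1|$. This demands careful bookkeeping of how components of $T - (V_1 \setminus S)$ relate to those of $T - V_1$ for intermediate $S \subseteq V_1$, together with a repeated appeal to condition (1c) to guarantee that $\alpha$ still appears in a branch at the vertex being removed. Once that Parter-set structure is in hand, reading off $\am_A(\alpha) = p_1$ and $\am_A(\beta) = p_2$ and ruling out spurious coincidences among the simple eigenvalues should follow routinely from condition (2).
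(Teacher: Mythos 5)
The paper does not prove this corollary: it is stated as an imported result, cited to \cite{fernandes_2015}, and sits immediately after Lemma~\ref{M2thm2pt5} for context rather than as a consequence. So there is no in-paper argument to compare yours against; your proposal is an independent attempt to derive the corollary from Lemma~\ref{M2thm2pt5}.

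On its own merits, the plan has the right shape but several genuine gaps. First, Lemma~\ref{M2thm2pt5} only produces an $A$ for which $\alpha$ is \emph{an eigenvalue} of $A[R]$ exactly when $R \in \mathcal{A}_1$; it says nothing about $\am_{A[R]}(\alpha) = 1$. Your ``mild generic perturbation'' is doing a lot of unacknowledged work here: a perturbation that forces multiplicity $1$ in every $A[R]$ must simultaneously preserve the ``if and only if'' statement for both $\alpha$ and $\beta$ on all components of both $T-V_1$ and $T-V_2$, and it is not at all clear that the set of matrices satisfying the conclusion of Lemma~\ref{M2thm2pt5} is open or closed under the kind of perturbation you need. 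Second, and more seriously, interlacing alone only gives $\am_A(\alpha) \geq \am_{A(V_1)}(\alpha) - |V_1| = p_1$; to obtain equality you must establish that $V_1$ is a Parter set for $\alpha$ relative to this particular $A$, i.e., that the multiplicity increases by exactly one at each vertex removal. Condition (1c) ensures $\alpha$ sits in a branch at each $v \in V_1$ in the fully-deleted picture $T - V_1$, but the Parter--Wiener theorem (Theorem~\ref{parterwiener}) and Theorem~\ref{monograph423} run in the opposite direction: they take a known multiplicity in $A$ and produce a Parter structure, not the reverse. You flag this step as the main obstacle yourself, and rightly so — it is the heart of the matter and is not resolved by ``iteratively invoking'' those theorems. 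Third, condition (2) (no overloading) is a combinatorial constraint on the assignment, not on the matrix $A$; it prevents the assignment from being inconsistent, but it does not by itself prevent $A$ from acquiring a third multiple eigenvalue, and the closing ``generic perturbation that fixes $\alpha$ and $\beta$'' again needs an argument that such a perturbation exists within $\mathcal{S}(T)$ while preserving everything built so far. As written, this is a reasonable outline of where a proof would have to go, but the hard steps are named rather than carried out.
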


\begin{definition}[RPM Set]
A \emph{residual path maximizing set} (RPM set) for a tree $T$ is a collection of $q$ vertices of $T$, whose removal from $T$ leaves a forest of $p$ paths in such a way that $p-q$ is maximized. In general, an RPM set of vertices is not unique, even in the value of $q$. We notate this maximum value of $p-q$ as $\Delta(T)$.
\end{definition}
\begin{lemma} \emph{(Corollary 4.2.5 in \cite{johnson_saiago_2018})}. \label{monograph425} 
Let $T$ be a tree, $A\in\mathcal{S}(T)$ and $\lambda$ be an eigenvalue of $A$ of multiplicity $M(T)\geq 2$. A vertex $v$ of $T$ is Parter for an eigenvalue $\lambda$ in $A$ if and only if there is an RPM set $Q$ for $T$, containing $v$, and such that $\am_{A(Q)}(\lambda)=\am_A(\lambda)+|Q|$.
\end{lemma}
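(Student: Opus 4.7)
My plan is to split the biconditional into two directions. The ``if'' direction will follow immediately from Cauchy interlacing, while the ``only if'' direction will combine Theorem~\ref{monograph423} with a short path-cover bookkeeping argument that leverages the identity $M(T) = P(T)$ recalled in the introduction.

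For the ``if'' direction, suppose $Q \ni v$ is an RPM set with $\am_{A(Q)}(\lambda) = \am_A(\lambda) + |Q|$. I enumerate $Q = \{v_1, \dots, v_q\}$ with $v_1 = v$ and let $m_i = \am_{A(\{v_1, \dots, v_i\})}(\lambda)$. Cauchy interlacing forces $m_i - m_{i-1} \in \{-1, 0, 1\}$, and since the telescoping sum equals $m_q - m_0 = |Q|$, every increment must be $+1$. In particular $m_1 - m_0 = 1$, i.e., $\am_{A(v)}(\lambda) = \am_A(\lambda) + 1$, so $v$ is Parter.

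For the ``only if'' direction, suppose $v$ is Parter for $\lambda$. Because $\am_A(\lambda) \geq 2$, the hypothesis $\lambda \in \sigma(A) \cap \sigma(A(v))$ of Theorem~\ref{monograph423} is satisfied, and I apply its ``moreover'' clause to extract a fragmenting Parter set $Q \ni v$ with $\am_{A(Q)}(\lambda) = M(T) + |Q|$, where $\lambda$ appears as a simple eigenvalue in exactly $M(T) + |Q|$ of the components $C_1, \dots, C_r$ of $T - Q$. I then chain
\[
M(T) + |Q| \,=\, \am_{A(Q)}(\lambda) \,\leq\, \sum_{i=1}^r M(C_i) \,=\, \sum_{i=1}^r P(C_i) \,=\, P(T - Q) \,\leq\, P(T) + |Q| \,=\, M(T) + |Q|,
\]
where the final inequality $P(T - Q) \leq P(T) + |Q|$ holds because removing a vertex from a path splits it into at most two pieces, so any minimum path cover of $T$ restricts to a path cover of $T - Q$ of size at most $P(T) + |Q|$.

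Equality throughout then forces $P(C_i) = 1$ for every non-empty $C_i$ and $\lambda \in \sigma(A[C_i])$ in each one; in other words, every component of $T - Q$ is an induced path containing $\lambda$. Thus $T - Q$ is a forest of $r = M(T) + |Q|$ paths with $r - |Q| = M(T)$, and the same chain applied to an arbitrary vertex set whose removal leaves only paths shows $p - q \leq M(T)$ in general, so $Q$ realizes $\Delta(T) = M(T)$ and is thus an RPM set containing $v$. The main obstacle is recognizing that the two bounds $\am_{A(Q)}(\lambda) \leq \sum_i P(C_i)$ and $P(T - Q) \leq P(T) + |Q|$ squeeze together via $M(T) = P(T)$ into simultaneous equalities that deliver both ``every component is a path'' and ``$Q$ is RPM'' in one step; once this telescoping observation is made, both directions fit on a single page.
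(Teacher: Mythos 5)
The paper does not actually prove this lemma; it is quoted verbatim as Corollary 4.2.5 of the Johnson--Saiago monograph and used as a black box, so there is no internal proof to compare against. What you have supplied is a correct, self-contained derivation from the two ingredients already recorded in the introduction (Theorem~\ref{monograph423} and the identity $M(T)=P(T)$), which is exactly the right level of machinery.

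Your ``if'' direction is the standard telescoping argument via Cauchy interlacing and is fine as written. Your ``only if'' direction is also sound, though a couple of steps deserve to be made slightly more explicit. In the squeeze
\[
M(T) + |Q| \,=\, \am_{A(Q)}(\lambda) \,\leq\, \sum_{i} \am_{A[C_i]}(\lambda) \,\leq\, \sum_{i} M(C_i) \,=\, \sum_i P(C_i) \,=\, P(T-Q) \,\leq\, P(T) + |Q|,
\]
equality forces $\am_{A[C_i]}(\lambda) = M(C_i) = P(C_i)$ for every component. Separately, the fragmenting Parter set makes $\lambda$ appear with multiplicity exactly $1$ in each of $M(T)+|Q|$ components; since each nonempty component contributes $P(C_i)\geq 1$ to the total $\sum_i P(C_i) = M(T)+|Q|$, there can be no further components, so $r = M(T)+|Q|$ and every $C_i$ is a path. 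The final step, that any vertex set $Q'$ whose removal leaves $p$ paths satisfies $p - |Q'| \leq P(T) = M(T)$ by the same monotonicity of path covers under vertex deletion, correctly establishes $\Delta(T) = M(T)$ and hence that your constructed $Q$ is an RPM set containing $v$. One small wording nit: the reason the hypothesis of Theorem~\ref{monograph423} holds is that $v$ is Parter (so $\lambda \in \sigma(A(v))$ automatically); the bound $\am_A(\lambda)\geq 2$ would also suffice via interlacing, but it is the Parter hypothesis that licenses the ``moreover'' clause giving $v\in Q$.
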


\begin{definition}[\texorpdfstring{$k$}{k}-NIM] A tree $T$ is called a \emph{$k$-NIM tree} if $M(T) \geq k$ and if, whenever $A\in\mathcal{S}(T)$ has $\am_A(\lambda)=M(T)-(k-1)\geq 2$ for some $\lambda$, it implies that $\am_A(\alpha)=1$ for all remaining eigenvalues $\alpha\in\sigma(A)$. 

\end{definition}



\begin{example}
If $k=1$ then $k$-NIM trees are known as simply \textit{NIM trees}.
Figure \ref{fig:nimexample} shows a NIM tree on $49$ vertices.

\begin{figure}[h]
\centering
\includegraphics[scale=0.15]{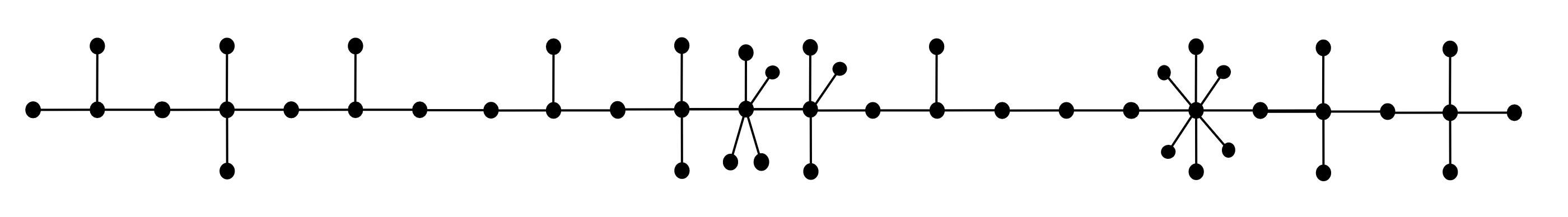}
\caption{An example of a NIM tree on $49$ vertices.}
\label{fig:nimexample}
\end{figure}
\end{example}

Finally, we present some background for use in the NIM enumeration section, which utilizes the symbolic method for constructing ordinary generating functions (OGFs); see \cite{flajolet_sedgewick_2013}.

\begin{definition}[undirected sequence construction]\label{USEQ}
This is the sequence construction $\Seq$ but counts two elements isomorphic to each other through reflection as the same element, which is what we mean when we say we treat these two objects as \emph{symmetrically equivalent}. We have for two combinatorial classes $\mathcal{B}$ and $\mathcal{C}$ that for $B(\*x)$ and $C(\*x)$ their respective OGFs, the following is known (see e.g. p. 86 of \cite{flajolet_sedgewick_2013} for reference)
\begin{align*}
    \mathcal{B}\cong\USeq_{\geq 1}\left(\mathcal{C}\right)
    \Longrightarrow
    B(\mathbf{x})=\frac{1}{2}\frac{1}{1-C(\mathbf{x})}+\frac{1}{2}\frac{1+C(\mathbf{x})}{1-C(\mathbf{x}^2)}-1.
\end{align*}\end{definition}

\begin{definition}[combinatorial substitution]
Given an object $c$ in some combinatorial class $\mathcal{C}$, if $c$ is marked by variables $\mathcal{Z},\mathcal{X}_1,\dots,\mathcal{X}_k$ then we denote by $c\left[\mathcal{X}_1\mapsto F_1(\mathcal{Z}),\dots,\mathcal{X}_k\mapsto F_k(\mathcal{Z})\right]$ the corresponding element in the new combinatorial class resulting from this substitution, denoted by  $\mathcal{C}\left[\mathcal{X}_1\mapsto F_1(\mathcal{Z}),\dots,\mathcal{X}_k\mapsto F_k(\mathcal{Z})\right]$ (where the $F_i$ map $\mathcal{Z}$ to some some function of $\mathcal{Z}$), the elements of which are only marked by $\mathcal{Z}$.

The OGF for a substitution corresponds with composition of the OGFs. That is, if $C_*(z)$ is the OGF for $\mathcal{C}\left[\mathcal{X}_1\mapsto F_1(\mathcal{Z}),\mathcal{X}_k\mapsto F_k(\mathcal{Z})\right]$, then $C_*$ is a univariate OGF with $C_*(z)=C(z,f_1(z),\dots,f_k(u_k))$, where $f_i(z)$ denotes the OGF for the combinatorial class $F_i(\mathcal{Z})$.  In summary, we have for combinatorial classes $\mathcal{B}$ and $\mathcal{C}$ that 
\begin{equation}
 \mathcal{B}\cong \mathcal{C}\left[\mathcal{X}_1\mapsto F_1(\mathcal{Z}),\dots,\mathcal{X}_k\mapsto F_k(\mathcal{Z})\right]
 \Longrightarrow 
 B(z)=C(z,f_1(z),\dots,f_k(z)).
\end{equation} 
(For more on combinatorial substitution, see Chapters I and III of \cite{flajolet_sedgewick_2013}.)
\end{definition}

\section{Characterization of \texorpdfstring{$k$}{k}-NIM trees}

We first present the main result, a characterization of $k$-NIM trees for all $k\geq 1$.

\begin{theorem}\label{mastertheorem}
Suppose $k\geq 1$ and $T$ is a tree with $M(T)\geq k+1$. Then $T$ is $k$-NIM if and only if for each HDV $v$ of $T$,
\begin{itemize}\label{nimcharacterization}
    \item[\textbf{\textsc{(i)}}] at most $3-k$ components of $T-v$ have more than one vertex, and
    \item[\textbf{\textsc{(ii)}}] $\delta(v)=\deg_T(v)-\deg_H(v)\geq k+1$.
\end{itemize}\end{theorem}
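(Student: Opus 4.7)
The plan is to prove the two directions of the equivalence separately, relying throughout on the Parter--Wiener framework (Theorem~\ref{parterwiener}), the identity $M(T)=P(T)$, the RPM-set criterion (Lemma~\ref{monograph425}), and the assignment realizability results (Theorem~\ref{singlehighmultassignment} and Corollary~\ref{alltwoeigassignmentsarerealizable}). In both directions, the hypothesis $\am_A(\lambda)=M(T)-(k-1)\geq 2$ invokes Theorem~\ref{monograph429} to produce a fragmenting Parter set $Q$ consisting only of HDVs, with $\lambda$ a simple eigenvalue in $M(T)-(k-1)+|Q|$ direct summands of $A(Q)$; this Parter set will be the primary structural object to analyze.

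For the forward direction I would argue the contrapositive: assume some HDV $v$ violates (i) or (ii), and exhibit a valid two-eigenvalue assignment for the list $\bigl(M(T)-(k-1),\,2,\,1,\,1,\dots\bigr)$ using $v$ as a Parter vertex for both eigenvalues. If (i) fails, then $v$ has at least $4-k$ components of $T-v$ with more than one vertex, so after the primary eigenvalue $\lambda$ consumes the $k-1$ branches needed to attain multiplicity $M(T)-(k-1)$, at least two large branches remain, into which I can place a second eigenvalue $\mu$ of multiplicity two. If (ii) fails, then $v$ has too many HDV neighbors, and the large branches rooted at those neighbors furnish the required additional capacity for an analogous construction. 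In either subcase, Corollary~\ref{alltwoeigassignmentsarerealizable} realizes the two-eigenvalue assignment as an actual $A\in\mathcal{S}(T)$, contradicting the $k$-NIM property.

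For the backward direction, assume (i) and (ii) hold at every HDV and let $A\in\mathcal{S}(T)$ satisfy $\am_A(\lambda)=M(T)-(k-1)\geq 2$. Suppose, toward contradiction, that a second eigenvalue $\mu$ of $A$ has $\am_A(\mu)\geq 2$, and let $Q$, $Q'$ be fragmenting Parter RPM sets of HDVs for $\lambda$ and $\mu$ respectively (Theorems~\ref{monograph423},~\ref{monograph429} and Lemma~\ref{monograph425}). The realization $A$ induces a valid two-eigenvalue assignment $(\mathcal{A}_1,\mathcal{A}_2)$ in the sense of Corollary~\ref{alltwoeigassignmentsarerealizable}. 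Localizing at an HDV $v\in Q\cup Q'$, condition (i) caps at $3-k$ the number of branches of $T-v$ that can absorb either $\lambda$ or $\mu$ in its multiplicity-$\geq 2$ role, while condition (ii) supplies at least $k+1$ non-HDV branches whose local eigenvalues at $\lambda$ and $\mu$ are rigidly pinned by $A$. A branch-capacity count then shows that the slots at $v$ are already tight for accommodating $\lambda$ alone; the added demand from $\mu$ overloads condition (2) of the assignment definition, yielding the contradiction.

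The main obstacle will be the combinatorial bookkeeping in the backward direction, particularly when $Q\cap Q'\neq\emptyset$, when a small branch of $T-v$ must host eigenvalue slots for both $\lambda$ and $\mu$, or when the Parter sets for $\lambda$ and $\mu$ pass through distinct HDVs whose constraints must be reconciled globally. I would separate the argument into the cases $k\in\{1,2\}$ (where large branches remain and the local overload at a single HDV already suffices) and $k=3$ (where (i) forbids large branches altogether, forcing $T$ to be a star and recovering the corollary noted in the abstract), and I would tie the local counts at each HDV to the global path-cover identity $M(T)=P(T)$ to ensure consistency across all HDVs simultaneously.
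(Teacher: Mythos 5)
Your high-level strategy (contrapositive construction of a second multiple eigenvalue for necessity; an overloading argument for sufficiency) is the same as the paper's, but as sketched it has three concrete gaps. First, the case $k\geq 4$ is not covered: there condition \textbf{\textsc{(i)}} is vacuously unsatisfiable, so the theorem amounts to the claim that \emph{no} tree with $M(T)\geq k+1$ is $k$-NIM, and your (i)-failure construction degenerates ("at least $4-k$ large branches remain" gives nothing when $4-k\leq 0$). The paper handles this with a separate existence argument (Theorem \ref{notreehigherthan4nim}) built on peripheral HDVs (Lemma \ref{existenceofperipheralHDVs}), assigning $\lambda_1$ to $M(T)+|Q|-3$ paths of $T-Q$ and $\lambda_2$ to branches at one peripheral HDV; nothing in your sketch substitutes for this. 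Second, your description of a failure of \textbf{\textsc{(ii)}} as "$v$ has too many HDV neighbors" is not correct: $\delta(v)=\deg_T(v)-\deg_H(v)\leq k$ can hold with $\deg_H(v)=0$ (e.g.\ a degree-$3$ vertex with three pendant neighbors when $k\geq 3$, or the paper's Case 2(b)(ii) for $k=2$ where $\deg_T(v)=3$, $\deg_H(v)=0$), so the "large branches rooted at HDV neighbors" you propose to exploit need not exist, and the construction must instead be routed through a second HDV elsewhere in the tree or through the RPM/path-cover count, as the paper does.

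Third, and most seriously, the backward (sufficiency) direction is asserted rather than proved. The claim that "a branch-capacity count at a single $v\in Q\cup Q'$" yields the overload is exactly the hard point: when the fragmenting Parter sets for $\lambda$ and $\mu$ are distinct or overlapping, no single vertex need be locally overloaded, and the paper's proof for $k=2$ spends most of its effort on precisely this (Cases 2a--2c), using that the set of HDVs forms an RPM set so that $T-\{v_1,v_2\}$ has exactly $M(T)+2$ components, all but at most two of which are singletons by \textbf{\textsc{(i)}}, and then running a pigeonhole argument across the \emph{global} component count, plus a degree obstruction in Case 2c. You correctly identify this as "the main obstacle," but identifying it is not the same as resolving it; a structural lemma such as Lemma \ref{2nim2hdv} (condition \textbf{\textsc{(i)}} with $k=2$ forces at most two HDVs) is what makes the case analysis finite, and your sketch contains no analogue. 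Finally, note that the paper does not prove $k=1$ uniformly but cites it to prior work; if you intend a uniform proof you must actually verify your capacity count there too, where up to two non-singleton branches per HDV are allowed.
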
 

Theorem 1 of \cite{johnson_saiago_2018} establishes this result for the NIM case (that is, the case $k=1$). The following two subsections verify $k=2$ and $k=3$, respectively, and the final subsection verifies the theorem for all $k\geq 4$, that is, that no $k$-NIM trees exist for $k\geq 4$.

\subsection{2-NIM trees}
In this subsection, we prove the special case of Theorem \ref{mastertheorem} for the case $k=2$. 


\begin{lemma}\label{2nim2hdv} If $k=2$ and a tree $T$ satisfies \textbf{\textsc{(i)}}, then $T$ has at most two HDVs.

\begin{proof} 
If $T$ satisfies \textbf{\textsc{(i)}} but has three HDVs then there's an induced path in $T$ containing them. Then one of them, say $v$, is not an end-vertex of this path. But then $T-v$ has at least two branches containing an HDV part of the induced path, which implies $T-v$ has two components with more than one vertex, contradicting \textbf{(i)}.\end{proof}\end{lemma}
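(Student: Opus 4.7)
The plan is to argue by contradiction: assume $T$ satisfies \textbf{\textsc{(i)}} with $k=2$ and has at least three HDVs, and derive a direct violation of \textbf{\textsc{(i)}}. The key preliminary observation is that if $u$ is an HDV contained in some component $C$ of $T-v$ (for any vertex $v\neq u$), then $|C|\geq 3$: indeed, $u$ has at least three neighbors in $T$ and loses at most one upon removing $v$, so $u$ retains at least two neighbors inside $C$. Combined with \textbf{\textsc{(i)}}, which for $k=2$ permits at most $3-k=1$ component of $T-v$ to have more than one vertex, this forces the following dichotomy: for every HDV $v$, all other HDVs of $T$ lie in a single component of $T-v$.

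From here I would pass to the minimal subtree $S$ of $T$ whose vertex set contains every HDV. By minimality, every leaf of $S$ is an HDV. I claim conversely that every HDV must be a leaf of $S$: if some HDV $v$ had $\deg_S(v)\geq 2$, then $S-v$ would split into at least two components, each of which contains a leaf of $S$ and therefore another HDV; these two HDVs would then sit in distinct components of $T-v$, contradicting the preceding paragraph.

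Now suppose there are at least three HDVs, so $S$ has at least three leaves. An elementary degree-sum argument on trees (using $\sum_x(\deg_S(x)-2)=-2$) guarantees that $S$ contains an internal vertex $c$ with $\deg_S(c)\geq 3$. Since $\deg_T(c)\geq\deg_S(c)\geq 3$, the vertex $c$ is itself an HDV of $T$. But then the previous paragraph forces $c$ to be a leaf of $S$, contradicting the fact that $c$ is an internal vertex of $S$. This is the desired contradiction.

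The main obstacle is the careful bookkeeping for the Steiner-subtree step, specifically verifying that (a) leaves of $S$ lie among the HDVs by minimality, and (b) the components of $S-v$ correspond to distinct branches of $T$ at $v$ that each contain an additional HDV. Both are routine once one tracks which branches at an HDV contain other HDVs. The author's write-up compresses this reasoning into the assertion ``there is an induced path in $T$ containing'' the three HDVs, which is essentially the same observation: condition \textbf{\textsc{(i)}} forces the HDVs to be linearly arranged along a single path of $T$, and then the middle HDV of three such vertices immediately violates \textbf{\textsc{(i)}}.
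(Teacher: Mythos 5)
Your proof is correct and is essentially the paper's argument made rigorous: the decisive step in both is that an HDV $v$ "between" two other HDVs has two branches each containing an HDV (hence each with more than one vertex), violating \textbf{\textsc{(i)}} for $k=2$. Your Steiner-subtree formulation has the added merit of justifying the paper's unproved assertion that the three HDVs lie on a common induced path, since your branching vertex $c$ handles exactly the non-collinear case the paper glosses over.
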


\begin{proof}[Proof of Theorem \ref{mastertheorem} for $k=2$]
Fix a tree $T$ with $M(T)\geq 3$. We first show that \textbf{\textsc{(i)}} and \textbf{\textsc{(ii)}} together are sufficient. Let $A\in\mathcal{S}(T)$ have an eigenvalue $\lambda$ of multiplicity $M(T)-1$. Then $\am_A(\lambda)\geq 2$, so by Theorem \ref{monograph423} there is a fragmenting Parter set $V$ for which $\am_{A(V)}(\lambda)=M(T)-1+|V|$. By Lemma \ref{2nim2hdv}, $T$ has either one or two HDVs (not zero, since that would imply $M(T)=1\leq 2$):

\begin{itemize} 
\item \textit{Case 1: $T$ has exactly one HDV, $v$.} By \textbf{\textsc{(ii)}} we know $\deg_T(v)\geq 4$, so $v$ is Parter for $\lambda$ by Theorem \ref{parterwiener}. Then by Theorem \ref{monograph423} there is a nonempty Parter set $V$ such that $\lambda$ is an eigenvalue of $M(T)-1+|V|$ direct summands of $A(V)$ with multiplicity 1 in each. By Theorem \ref{monograph429}, we can choose $V$ to contain only HDVs; $v$ is the only HDV of $T$, so $V=\{v\}$ is a Parter set for $\lambda$. Then $\am_{A(v)}(\lambda)=M(T)-1+|V|=M(T)$, where each of the $\deg_{T}(v)\geq 4$ components $T_i$ of $T-v$ have $\am_{A[T_i]}(\lambda)=1$. 

By \textbf{\textsc{(i)}}, either all or all but one of these components must be singletons. Suppose toward a contradiction $\alpha\neq \lambda$ is also a high multiplicity eigenvalue of $A$. Then, because $v$ is the only HDV in $T$, $v$ must be Parter for both $\alpha$ and $\lambda$, so at least three of the $T_i$ must be assigned $\alpha$ by Theorem \ref{parterwiener}. There is at most one non-singleton component in $T-v$ by \textbf{\textsc{(ii)}}, so by the pigeonhole principle a singleton is assigned both $\alpha$ and $\lambda$. It follows that the assignment is invalid by overloading, so no such $\alpha$ exists, a contradiction.

\item \textit{Case 2: $T$ has exactly two HDVs, $v_1$ and $v_2$.} By \textbf{\textsc{(ii)}} we know $\deg_T(v_i)\geq 4$, $i=1,2$. Suppose toward contradiction $\alpha\neq \lambda$ is also a high multiplicity eigenvalue of $A$. $v_1$ (or $v_2$) alone cannot be Parter for both $\alpha$ and $\lambda$ by the same reasoning as in the previous case, so we can assume without loss of generality that $v_1$ is strong Parter for $\lambda$ and $v_2$ is strong Parter for $\alpha$. Then by Theorem \ref{monograph423} there are nonempty Parter sets $V_\lambda, V_\alpha$ such that $\lambda$ (resp. $\alpha$) is an eigenvalue of $M(T)-1+|V_{\lambda}|$ (resp. $\am_A(\alpha)+|V_{\alpha}|$) direct summands of $A(V_\lambda)$ (resp. of $A(V_\alpha)$),
where $\lambda$ (resp. $\alpha$) has multiplicity $1$ in each. By Theorem \ref{monograph429}, we can choose $V_\lambda$ (resp. $V_\alpha$) to contain only HDVs, so there are only three possible cases:
\begin{itemize} 
\item \textbf{Case 2a}: $V_\lambda=V_{\alpha}=\{v_1,v_2\}$. Then $M(T)-1+|V_\lambda|=M(T)+1$ components of $T-V_\lambda$ are assigned $\lambda$ with multiplicity $1$ in each and $\am_A(\alpha)+|V_\alpha|=\am_A(\alpha)+2$ components of $T-V_\alpha$ are assigned $\alpha$ with multiplicity $1$ in each. Notice that $\{v_1,v_2\}$ is an RPM set for $T$, since by \textbf{\textsc{(ii)}} we have $\delta(v_i)\geq 4\geq 2$, $i=1,2$. It follows that there are $\Delta(T)+|\{v_1,v_2\}|=M(T)+2$ components (all paths) in $T-\{v_1,v_2\}$. All but at most two of these must be singletons by \textbf{\textsc{(i)}}, so at least one singleton must be overloaded. 
This rules out this case.
    
\item \textbf{Case 2b}: $V_\lambda=\{v_1,v_2\}$, $V_\alpha=\{v_2\}$. Then $M(T)-1+|V_\lambda|=M(T)+1$ components of $T-V_\lambda$ are assigned $\lambda$ with multiplicity $1$ in each and $\am_A(\alpha)+2$ components of $T-V_\alpha$ are assigned $\alpha$ with multiplicity $1$ in each. 

There are $\Delta(T)+|\{v_1,v_2\}|=M(T)+2$ components (all paths) in $T-\{v_1,v_2\}$ by the same argument from Case 2a. Moreover, all but at most two of these must be singletons by \textbf{\textsc{(i)}} (in fact, at most one). On the other hand, there are $\deg_T(v_2)$ components in $T-v_2$, so $\am_{A(v_2)}(\alpha)=\am_A(\alpha)+1$, which is $\geq 3$ since $\alpha$ is a high multiplicity eigenvalue for $A$. Thus, since all but (at most) one of the components of $T-V_\alpha=T-v_2$ must be singletons, there must be $\geq 2$ singletons present in both $T-V_\lambda$ and $T-V_\alpha$. If no singleton is to be overloaded, each must be assigned an eigenvalue at most once. Since there are $M(T)+2$ branches formed by the removal of $V_\lambda$, and we must assign $\lambda$ to $M(T)+1$ of them, at least one of these singletons must be assigned $\lambda$ by the pigeonhole principle. But then $3$ branches of $T-V_\alpha$ must be assigned $\alpha$, and there are only two branches that do not cause overloading when assigning---the unassigned singleton, and the non-singleton component of $T-v_2$. So, we must assign $\alpha$ to another singleton by the pigeonhole principle. But we already assigned $\lambda$ to that singleton, so this assignment is invalid by overloading.

\item \textbf{Case 2c}: $V_{\lambda} = \{v_1\}$. We need that $\am_A(\lambda)=M(T)-1$ and that $\am_A(\alpha)\geq 2$. $\am_{A(v_1)}(\lambda)=M(T)$ since $V_\lambda=\{v_1\}$ is a Parter set, so $\lambda$ must be assigned to $M(T)$ branches at $v_1$, which requires $\deg_T(v_1)\geq M(T)$. But we know from Case 2a that $\{v_1,v_2\}$ is an RPM set, so if $\deg_T(v_1)\geq M(T)$ then the following holds:
\begin{align*}
    M(T)+2=\Delta(T)+|\{v_1,v_2\}|
    &=
    \text{The number of components of }(T-\{v_1,v_2\})
    \\
    &\geq \underbrace{(M(T)-1)}_{\text{pendant vertices at $v_1$}}+\underbrace{1}_{\substack{\text{path connecting} \\ {\text{$v_1$ and $v_2$}}}}+\underbrace{\deg_T(v_2)-1}_{\text{pendant vertices at $v_2$}}\\
    &\geq M(T)+3 \tag{since $\deg_T(v_2)\geq 4$ by \textbf{\textsc{(ii)}}},
\end{align*} 
a contradiction. This rules out this case. 
\end{itemize} 

Since the above cases cover all possibilities, \textbf{\textsc{(i)}} and \textbf{\textsc{(ii)}} are together sufficient.

\end{itemize}

We now show that \textbf{\textsc{(i)}} and \textbf{\textsc{(ii)}} are necessary conditions. 
Suppose toward contradiction some tree $T$ is $2$-NIM but that condition \textbf{\textsc{(i)}} fails for $T$ or condition \textbf{\textsc{(ii)}} fails for $T$. 
\begin{itemize} 

\item \emph{Case 1: $T$ does not satisfy \textbf{\textsc{(i)}}}. 
Then there's some HDV $v_*$ such that $T-v_*$ has two non-singleton components, say $T_1$ and $T_2$. Let $A\in\mathcal{S}(T)$ satisfy $\am_A(\lambda)=M(T)-1$ and denote by $V_\lambda$ a fragmenting Parter set for $\lambda$ in $T$ ($V_\lambda$ exists by Theorem \ref{monograph423}). We now split into cases depending on the number of HDVs of $T$.
   
\begin{itemize} 
\item  \textit{Case 1.1: $T$ has a single HDV}. Then $Q = \{v_*\}$ is an RPM set for $T$, and so $\deg_T(v_*) = M(T)+1$. Then since $M(T) \geq 3$, we have $\deg_T(v_*) \geq 4$. Let $f$ be the number of arms of $T$ at $v_*$ (so $f = \deg_T(v_*)$). Since \textbf{\textsc{(i)}} fails for $T$, there are two arms of length $\geq 2$. We will construct $B\in\mathcal{S}(T)$ with multiplicity list $(M(T)-1,2,\dots)$. This will imply $T$ is not $2$-NIM because then there is a list of $\mathcal{L}(T)$ such that a multiplicity of $M(T)-1$ does not imply the simplicity of the remaining eigenvalues, as the list contains an eigenvalue $\mu$ with $\am_B(\mu)=2$. To ensure $\am_B(\mu)= 2$, three components of $T-v_*$ must be assigned $\mu$. We assign both $\lambda$ and $\mu$ to $T_1$ and then assign both $\lambda$ and $\mu$ to $T_2$. We then assign $\mu$ to one of the $f-2$ remaining arms. Finally, assign $\lambda$ to each of the $f-3$ remaining arms. Then $\mu$ is assigned to three branches at $v_*$, so is realizable by Corollary 5.2.3 of \cite{johnson_saiago_2018}. Hence $T$ is not $2$-NIM, a contradiction. 

\item \textit{Case 1.2: $T$ has two or more HDVs}. Then $v_*$ may or may not be Parter for $\lambda$, giving two cases. Our strategy is to construct a near-assignment $\mathcal{A}=\{\mathcal{A}_\lambda,\mathcal{A}_\mu\}$ with fragmenting Parter sets $V_\lambda$, $V_\mu$ corresponding to the list  $(M(T)-1,p,1^{n-(M(T)-1)-p})$ for $p\geq 2$.
We will then apply Lemma \ref{M2thm2pt5}, which guarantees the existence of $A\in\mathcal{S}(T)$ with each component of $T-V_{\lambda}$ being assigned $\lambda$ and, for $\mu\neq \lambda$, $\mu$ being assigned to $\geq 3$ connected components of $T-V_\alpha$, should we find a corresponding valid near-assignment, which is realizable by Lemma \ref{M2thm2pt5}.

\begin{itemize} 
\item \textit{Case 1.2a: $v_*$ is Parter for $\lambda$ in $T$}. Then either (I) $V_\lambda=\{v_*\}$ or (II) $V_\lambda=\{v_*,v_{i_2},\dots,v_{i_h}\}$.
   
\begin{itemize} 
\item We show that (I) is not possible. Suppose for a contradiction that it was. Then, since $V_\lambda$ is then clearly minimal, we have by (the contrapositive of) Corollary 4.2.8 in \cite{johnson_saiago_2018} that since $k=|V_\lambda|\not >1$, either $V_\lambda$ is not a fragmenting Parter set for $\lambda$ of cardinality $k$ or $v_*\in V_\lambda$ is not a Parter vertex for $\lambda$ of degree less than $\am_A(\lambda)+1=(M(T)-1)+1=M(T)$. Since $V_\lambda$ is by definition/hypothesis the fragmenting Parter set for $\lambda$ and has, in this case, cardinality $k=1$, then $d(v_*) \geq M(T)$. Since condition \textbf{(i)} does not hold, $v_*$ contains at least two components with more than one vertex, and at least one of these components must contain an HDV $w$, and thus be of size at least three. We assign to each branch at $v_*$ the eigenvalue $\lambda$, and to each branch at $w$ the eigenvalue $\alpha$. Since $d(w) \geq 3$, this gives an assignment candidate to the list $(M(T)-1, 2, \dots)$, and no overloading occurs because the component at $v$ containing $w$ is assigned $3$ times, and has size at least $3$. Per Theorem 1.5, It follows that our tree is not $2$-NIM, and thus (I) is impossible. 
   
\item We now consider (II), that $V_\lambda=\{v_*,v_{i_2},\dots,v_{i_h}\}$. Now $T$ has $\geq h$ HDVs, and $T$ has at least two branches at $v_*$ with more than one vertex. Since $V_\lambda$ is by definition a fragmenting Parter set for $\lambda$ in $T$, we have that $\am_{A(V_\lambda)}(\lambda)=(M(T)-1)+|V_\lambda|=(M(T)-1)+h$. Upon deletion of the $h$ HDVs, there must be, therefore, $(M(T)-1)+h$ components that are assigned $\lambda$. Either $V_\lambda$ contains all HDVs or just some. We first deal with the case that $V_\lambda$ contains all HDVs, meaning that $T$ has $h$ HDVs in total. Then by Theorem \ref{monograph429} we can choose an RPM set $Q$ for $T$ that is a subset of $V_{\lambda}$ that allows any RPM set to consist only of HDVs in $T$ since $M(T)-1\geq 2$). Furthermore, we choose $Q \subset V_{\lambda}$ to be the largest of these. Suppose for a contradiction that the fragmenting Parter set $V_\lambda$ is minimal. Then $\am_{A[T-V_\lambda]}(\lambda)=(M(T)-1)+h$ and $\am_{A[T-(V_\lambda\setminus \{w\})]}<M(T)-1+h$ for any HDV $w$ in $T$. Now, any fragmenting Parter set $Q$ can be made such that $Q\subseteq V_{\lambda}$, since $V_\lambda$ contains all the HDVs present in $T$. Moreover, there's a $B\in\mathcal{S}(T)$ that has an eigenvalue $\beta$ of $M(T)$ for which $Q$ is a fragmenting Parter set for $\beta$ in $B$. This means that $T-Q$ leaves at least $\geq M(T)+|Q|$ \textit{paths}. So, since $Q\subseteq V_\lambda$, if we obtain $T-V_\lambda$ by first deleting $Q$ from $T$ and then by deleting $V_\lambda\setminus Q$ from what results, we should have $\geq M(T)+|Q|$ paths. But we need that $M(T)+(h-1)$ direct summands of this are assigned $\lambda$, so we would like to have $h-1\leq Q$. The vertices in $V_\lambda\setminus Q$ must be endpoints of the remaining paths that form the forest $T-Q$, since if they weren't, then they could be included in the RPM set. This is because if they were on the interior of any of the paths in $T-Q$, then any \textit{one} vertex $y\in V_\lambda\setminus Q$ being deleted would increase the number of components by \textit{one}, which would mean that an RPM set $Q'=Q\cup \{y\}$ could be constructed to include them, contradicting that $Q$ is at the maximum size. Thus, deleting the vertices in $V_\lambda\setminus Q$ from $T-Q$ results in either the same number of paths or fewer paths (should any vertex in $V_\lambda\setminus Q$ be a singleton in $T-Q$ so that upon its deletion the number of paths decreases, or also in the case where there is a 2-path in $T-Q$ each vertex is in $V_{\lambda}-Q$). The takeaway here is that $T-V_\lambda$ has no more components than $T-Q$. Since $T-Q$ has at least $M(T)+|Q|$ components and $T-V_\lambda$ has at least $(M(T)-1)+h$ components and $|Q|\leq h$, we have that $|T-Q|\geq |T-V_\lambda|$ implies $M(T)+|Q|\geq (M(T)-1)+h$ implies $|Q|\geq h-1$. But $|Q|\leq h$, so 
\begin{align}\label{h1Qh}
                h-1\leq |Q|\leq h.
\end{align} 
If $|Q|=h$, then $Q=V_\lambda$, which means that there's an RPM set $Q$ for $T$, containing $v_*$, and such that $\am_{A(Q)}=\am_A(\lambda)+|Q|$ since $V_\lambda$ by definition is a fragmenting Parter set for $\lambda$ (and thus now so is $Q$). But Lemma \ref{monograph425} implies that $v_*$ is Parter for $\lambda$ but that $\lambda$ is of multiplicity $M(T)$, a contradiction. It then follows from \eqref{h1Qh} that $|Q|=h-1$, so there's a vertex in $V_\lambda$ that is not in $Q$, meaning $Q$ is a \emph{proper} subset of $V_\lambda$. Now, if $|Q|=h-1$, then $T-Q$ has at least $M(T)+|Q|=(M(T)-1)+h$ components. But we assumed that $V_\lambda$ was minimal, so no set $P$ of Parter vertices in $T$  that is smaller than $V_\lambda$ can exist such that $\am_{A(P)}(\lambda)=(M(T)-1)+h$. But $Q$ demonstrates that this is indeed the case, a contradiction. It then follows that $V_{\lambda}$ is not minimal. Therefore, we can choose any HDV $z\in V_\lambda$ and we will have that $V_{\lambda}':= V_{\lambda} \setminus \{z\}$ is also a fragmenting Parter set for $T$. We now need to show that we can make $z$ strong Parter for $\mu$ in an assignment without overloading. We note that the branches at $z$ in $T-V_\lambda'$ are all branches different from those of $T-z$ when $z$ has two pendant paths, and let $z$ be such a vertex as guaranteed by Lemma 3.3.3 in $\cite{johnson_saiago_2018}$. $z$ must have another branch, so we have now established that $T-z$ consists of $\geq 3$ branches, two pendant paths that are not assigned $\lambda$ since we can choose $z$ to be the one HDV that satisfies $z\not\in V_\lambda$, and also the third branch that is guaranteed, to at least connect $z$ and its two pendant paths to the rest of the tree, is also not assigned $\lambda$. This means that at least three components of $T-z$ are not assigned $\lambda$, so we can assign them $\mu$ without overloading. We now set $\mathcal{A}_\mu$ to be the set of some three of these components in $T-z$,  $V_\mu=\{z\}$, and then set $\mathcal{A}_\lambda$ to be the set of components of $T-V_{\lambda}'= T- (V_\lambda \setminus\{v_*\})$. Since this is a valid near-assignment per our remarks throughout, we're done by our initial remarks.
\end{itemize}
            
On the other hand, if $V_\lambda$ does not contain all the HDVs, then we can choose an HDV $y$ in $T$ with $y\not\in V_{\lambda}$ for which $T-y$ has $\geq 3$ components, none of which are components of $T-V_{\lambda}$. Such a $y$ exists---indeed, if $Q$ is an RPM set for $T$ and $Q\subseteq V_{\lambda}$ then we're done by the above case; if instead $V_{\lambda}\subseteq Q$, then then by Lemma 5.1.5 in $\cite{johnson_saiago_2018}$ none of the $\geq 3$ components of $T-y$ 
are assigned $\lambda$, so for three branches at $y$, say $T_1,T_2,T_3$, we can set $\mathcal{A}_\alpha=\{T_1,T_2,T_3\}$, with $V_\alpha=\{y\}$.
        
\item \textit{Case 1.2b: $v_*$ is not Parter for $\lambda$ in $T$}.
Observe that everything in $V_\lambda$ has at most one non-pendant neighbor, since if there were a vertex $u$ in $V_\lambda$ with $\geq$ two non-pendant neighbors, then we can choose $v_*=u$.

We're done if there is a vertex $y$ of $T-V_\lambda$ such that $\deg_{T_y}(y)\geq 3$, where $T_y$ is the component of $T-V_\lambda$ containing $y$. Indeed, if this were the case then we can set $\mathcal{A_\mu}=\{X_i:X_i\text{ is a component of } T_y-y,i=1,2,3\}$. Suppose for a contradiction that $T-V_\lambda$ contains nothing more than paths and singletons. $V_\lambda$ satisfies the hypothesis for the set $Q$ in Lemma 3.3.7 of \cite{johnson_saiago_2018}, so $V_\lambda$ is an RPM set for $T$. Then by Lemma \ref{monograph425} $\lambda$ is an eigenvalue of multiplicity $M(T)$, not $M(T)-1$, a contradiction. Thus $T-V_\lambda$ must contain the desired vertex $y$, so we're done by our initial remarks. 
\end{itemize}
We conclude that \textbf{\textsc{(i)}} is a necessary condition.
\end{itemize}

\item\textit{Case 2: $T$ satisfies \textbf{\textsc{(i)}} (it must by the above argument), but not \textbf{\textsc{(ii)}}}. Then there is an HDV $v$ with $\leq 3$ non-HDV neighbors. $\deg_H(v) \leq 1$ by \textbf{\textsc{(i)}} and \ref{2nim2hdv}, so since $v$ is an HDV there are two possibilities: 

\begin{itemize} 
\item \textit{Case 2(a): $\deg_T(v) = \deg_H(v) + 3$}. Then necessarily $\deg_T(v) = 3$ and $\deg_H(v) = 1$, since $\deg_H(v) \leq 1$. Then we know that $v$ is adjacent to another HDV $v'$, and since there can be at most two HDVs by Lemma \ref{2nim2hdv} we know that $T$ takes the form of a double star. Furthermore, we know that $Q = \{v, v'\}$ forms an RPM set for $T$ since $\delta(v),\delta(v') \geq 2$. We know that there are two pendant vertices at $v$. We have that $P(T)=|T-Q|-2 = \deg_T(v) + \deg_T(v') - 3$, since we overcount the edge connecting the two HDVs, and since $\deg_T(v)=3$. By Lemma \ref{monograph425} we therefore have that there is a matrix $A \in \mathcal{S}(T)$ that has an eigenvalue of multiplicity $M(T)=\deg_T(v')$. Our Parter set for $\lambda$ (the eigenvalue of multiplicity $M(T) - 1$) is $\{v'\}$ and our Parter set for $\alpha$, which must occur with multiplicity is $2$, is $\{v\}$. Then construct the matrix $A \in \mathcal{S}(T)$ by prescribing the following: $A - v'$ leaves a block diagonal matrix, with all but one block (of size 3-by-3) being singletons, and to each such block assign $\lambda$ as an eigenvalue. This is certainly possible as removing $v'$ leaves a bunch of singletons and this 3-path, each of which can be assigned such a value. This assigns $\deg_T(v')$ blocks $\lambda$, and in the re-addition of $v'$ has $\lambda$ occurring of multiplicity $M(T)-1$, I think, that is if $v'$ was Parter for $\lambda$. 
    
Then consider $A[T-v]$. We want to assign each block the eigenvalue $\alpha$ so as not to cause overloading. Since the large component of $T-v$ is not fully assigned out, it does not have $\lambda$ occur $\deg_T(v')-1$ times, rather less. Then we assign $\alpha$ to this component and the two pendant vertices of $v$. This gives a valid assignment, and Lemma \ref{M2thm2pt5} gives a matrix $A$, which gives a multiplicity list $(M(T)-1, 2, \dots)$.
    
\item \textit{Case 2(b): $\deg_T(v) = \deg_H(v) + 2$}. 
By case 2, we can assume that $\deg_T(v) \geq \deg_H(v) + 3$ for all HDVs $v$, and for the particular $v$ that has equality. We have two cases, either $\deg_T(v) = 4$ and $\deg_H(v) = 1$ or $\deg_T(v) = 3$ and $\deg_H(v) = 0$.
\end{itemize}

\begin{itemize}
\item \textit{Case 2(b)(i): $\deg_T(v) = 4$ and $\deg_H(v) = 1$}. In this case, another pendant vertex is attached to $v$, and as a result, the path cover number goes up by $1$, with reference to Case 1. To assign $M(T)$ subtrees $\lambda$, we follow the assignment described in Case 1, and in the new pendant vertex at $v$, we assign $\lambda$. In this way, we have assigned $M(T)$ subtrees $\lambda$ and $3$ subtrees $\alpha$. No overloading occurs as there was none in Case 1, and none can result because of assigning $\lambda$ to a single pendant vertex.
     
\item \textit{Case 2(b)(ii): $\deg_T(v) = 3$ and $\deg_H(v) = 0$}. Either there is a second HDV or this is a 3-star, and there's only one interesting case. In this case, the difference from Case 2 is that the two HDVs are not joined together by an edge---rather, by a path. The path cover number does not change, and the same assignment from the first case will do, where we include the path in each subtree where necessary. 
\end{itemize}

\end{itemize}
This proves necessity of \textbf{\textsc{(i)}} and \textbf{\textsc{(ii)}}, completing the proof of Theorem \ref{mastertheorem} in the case $k=2$. 
\end{proof}

The following is an immediate consequence of Theorem \ref{mastertheorem} in the case $k=2$. 

\begin{corollary} 
If $T$ is $2$-NIM, then $T$ is NIM.
\end{corollary}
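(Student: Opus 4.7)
The plan is to obtain the corollary by directly comparing the graph‑theoretic characterizations of $2$-NIM and NIM trees provided by Theorem~\ref{mastertheorem}. By hypothesis $T$ is $2$-NIM, and the nontrivial content of the corollary is under the standing assumption $M(T)\geq k+1=3$ of Theorem~\ref{mastertheorem}, so I may apply the $k=2$ characterization: for every HDV $v$ of $T$, at most $3-2=1$ component of $T-v$ has more than one vertex, and $\delta(v)=\deg_T(v)-\deg_H(v)\geq 3$.

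Next I would invoke Theorem~\ref{mastertheorem} for $k=1$ (equivalently, Theorem~1 of \cite{johnson_saiago_2018}), which says $T$ is NIM provided $M(T)\geq 2$ and, for every HDV $v$: at most $3-1=2$ components of $T-v$ have more than one vertex, and $\delta(v)\geq 2$. The three conditions to be verified are each strictly weaker than their $k=2$ counterparts: $M(T)\geq 3$ trivially gives $M(T)\geq 2$; having at most one non‑singleton component of $T-v$ implies having at most two; and $\delta(v)\geq 3$ implies $\delta(v)\geq 2$. Since each HDV $v$ of the $2$-NIM tree already satisfies the stronger pair, the weaker pair holds at every HDV, hence $T$ is NIM.

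There is essentially no obstacle: the whole argument is a monotonicity observation on the characterization in $k$, with the only thing worth flagging being that Theorem~\ref{mastertheorem} is stated under $M(T)\geq k+1$, so the corollary is to be read in that same regime (which is automatic from $T$ being $2$-NIM in the sense of the theorem, as $M(T)\geq 3$). If one wished to address the degenerate regime $M(T)=2$ separately, the $2$-NIM condition becomes vacuous, but then one would not have the characterization to invoke; this is not needed for the corollary as phrased within the paper's framework.
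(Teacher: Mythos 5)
Your proof is correct and is exactly the argument the paper intends when it calls the corollary ``an immediate consequence of Theorem~\ref{mastertheorem} in the case $k=2$'': translate the $2$-NIM hypothesis into conditions \textbf{\textsc{(i)}} and \textbf{\textsc{(ii)}} with $k=2$, observe these dominate the $k=1$ versions, and apply the (already known) $k=1$ characterization. Your remark about the degenerate regime $M(T)=2$ is well taken and in fact essential: a generalized star with three legs of length two has $M(T)=2$, is vacuously $2$-NIM, but fails condition \textbf{\textsc{(i)}} for $k=1$ and hence is not NIM, so the corollary must indeed be read under the hypothesis $M(T)\geq 3$ inherited from the theorem, as you noted.
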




\subsection{3-NIM Trees}
In this subsection, we prove the statement of Theorem \ref{mastertheorem} in the case $k=3$. This will follow from the following theorem.

\begin{theorem} \label{3nimsimplestar}
Let $T$ be a tree on $n$ vertices with $M(T)=4$. Then $T$ is $3$-NIM if and only if $T=S_n$.

\begin{proof}
We first show that if $T$ is $3$-NIM then it must be the simple star $S_n$. We have that $T$ is 3-NIM, so $(M(T)-2,(1)^{n-M(T)+2})$ is the only list in $\mathcal{L}(T)$ containing the multiplicity $M(T)-2$. The base cases $M(T)\leq 3$ have already been checked and are straightforward calculations, so we suppose $M(T)\geq 4$. Then $M(T)-2\geq 2$, so by Theorem \ref{parterwiener} there's a strong Parter HDV $v$ for $\lambda$ in $T$. Now, suppose toward contradiction $T$ is not a simple star. Then $T$ is either a generalized star or has more than one HDV ($T$ being a path says nothing about our claim since then $\mathcal{L}(T)=(1^n)$, so we may assume that $T$ is not a path):
\begin{itemize}
\item
\textit{Case 1: $T$ is a (non-path) \textit{generalized} star}. Now, one arm $\ell_k$ of $v$ must have length $\ell_k\geq 2$. Even if all the remaining branches at $v$ are pendant vertices, we can assign to $\deg(v)-3$ of these branches the eigenvalue $\alpha$ with $\am_A(\alpha)=M(T)-2$, and assign to the remaining three components the eigenvalue $\beta$. Consider a candidate-assignment $\mathcal{A}$ corresponding two eigenvalues $\alpha\neq \beta$ with $\am_A(\alpha)=M(T)-2$ and $\am_A(\beta)=2$ where $\mathcal{A}_\alpha$ contains only $\deg(v)-5=M(T)-2$ of the arms arms and where the subtrees (resp. Parter vertices of $T$) assigned to $\alpha$ and $\beta$ are $\mathcal{A}_\alpha$ and $\mathcal{A}_\beta$ (resp. $V_\alpha$, $V_\beta$, respectively. that has assignments  $\ell_1,\dots,\ell_M(T)-2$. $M(T)=n-2=\deg(v)-1$, so $M(T)-2=\deg(v)-3$. Thus three arms can be assigned to the eigenvalue $\beta$ while the rest are assigned $\alpha$, so no overloading occurs.
 
\item 
\textit{Case 2: $T$ has more than one HDV}. Now $T$ has two peripheral HDVs, say $v_1, v_2$, which then
belong to an RPM set $Q$. $T-Q$ has at least $M(T) + |Q|$ direct summands, of which we would like to assign $M(T)+|Q|-2$ $\lambda_1$, so we do this for all but two of the components emanating from $v_1$. For $\lambda_2$ let $\{v_1\}$ be the fragmenting Parter set, and assign $\lambda_2$ to the two unassigned components and the subtree containing all other HDVs (there must be at most one such subtree by peripherality). No overloading occurs in this assignment since there is at least one other Parter vertex in this subtree. Thus the assignment is valid, so $(M(T)-1, 2,\dots)$ is realizable.
\end{itemize}

We now show that all simple stars are $3$-NIM. Consider the simple star $S_n$, $n\geq 3$ which has $P(T) = n-2$, so $M(T) = n-2$ (because in a path cover we can create one path between two pendant vertices and the center vertex, forcing the rest of the $n-3$ pendant vertices to be their own path). To show that $S_n$ is $3$-NIM, we only need to show that any list of the form $(M(T)-2,k,\dots)$ for $k\geq 2$ is not in $\mathcal{L}(T)$. Suppose for a contradiction that such a list did indeed exist in $\mathcal{L}(T)$, and let $\lambda$ be the eigenvalue of $A$ with multiplicity $M(T)-2=n-4$. Then, by Theorem \ref{parterwiener}, we have that the central vertex $v$ is Parter for $\lambda$, and thus $\am_{A(v)}(\lambda)=M(T)-2+1=M(T)-1=n-3$, so it follows that $n-3$ of the Pendant vertices must be assigned $\lambda$ as an eigenvalue since the $n-1$ singletons are all that remain when deleting $v$. But $k\geq 2$, so if $\alpha$ is the eigenvalue of multiplicity $k\geq 2$, then again by \ref{parterwiener} we have that $v$ is strong Parter for $\alpha$, so three branches at $v$ must be assigned $\alpha$, but there are only two remaining spots, so by the pigeonhole principle we have that at least one of the pendant vertices must be assigned $\lambda$ and $\alpha$, a contradiction.
\end{proof}
\end{theorem}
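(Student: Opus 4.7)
The plan is to prove both directions of the biconditional via Parter-Wiener combined with overloading arguments on explicit assignments, treating the only-if direction by a case analysis on the HDV-structure of $T$.

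For the direction $T = S_n \Rightarrow T$ is $3$-NIM, I would fix $A \in \mathcal{S}(S_n)$ with some eigenvalue $\lambda$ of multiplicity $M(T) - 2 = n - 4$ and suppose, for contradiction, that another eigenvalue $\alpha$ has multiplicity $k \geq 2$. Since the center $v$ is the only HDV of $S_n$ and both multiplicities are at least $2$, Theorem \ref{parterwiener} forces $v$ to be a strong Parter vertex for both $\lambda$ and $\alpha$. A fragmenting Parter set $\{v\}$ then yields $\am_{A(v)}(\lambda) = n - 3$ pendants carrying $\lambda$ and $\am_{A(v)}(\alpha) = k + 1 \geq 3$ pendants carrying $\alpha$. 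Since a pendant is a size-$1$ subtree that can carry at most one eigenvalue without overloading, yet $(n - 3) + (k + 1) = n + k - 2 > n - 1$ for $k \geq 2$, some pendant must be overloaded, a contradiction.

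For the converse, I would assume $T$ with $M(T) \geq 4$ is not a simple star and construct a valid assignment realizing a list of the form $(M(T) - 2, 2, 1^*)$; by Corollary \ref{alltwoeigassignmentsarerealizable}, this assignment produces $A \in \mathcal{S}(T)$ with that multiplicity list, contradicting $3$-NIM. If $T$ has a single HDV $v$ then $T$ is a generalized star with $\deg(v) = M(T) + 1$ and at least one arm of length $\geq 2$ (else $T = S_n$). I would take $V_\lambda = V_\alpha = \{v\}$, assign $\lambda$ (multiplicity $M(T) - 2$) to $M(T) - 1$ arms including the long one, and assign $\alpha$ (multiplicity $2$) to three arms also including the long one; the long arm, of size $\geq 2$, absorbs both eigenvalues while every other arm carries at most one, so no overloading occurs. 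If instead $T$ has at least two HDVs, I would select two peripheral HDVs $v_1, v_2$ (each having a unique HDV-containing branch at itself), let $Q \supseteq \{v_1, v_2\}$ be an RPM set of HDVs (Theorem \ref{monograph429}), and use $Q$ as the fragmenting Parter set for $\lambda$: assign $\lambda$ to all but two of the $M(T) + |Q|$ path components of $T - Q$, choosing the unassigned pair from the $\deg(v_1) - 1$ non-$T_0$ branches at $v_1$, where $T_0$ is the unique HDV-containing branch at $v_1$. Then take $V_\alpha = \{v_1\}$ and assign $\alpha$ to those two unassigned branches together with all of $T_0$.

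The main obstacle is the overloading check on $T_0$ in the multi-HDV case, where $T_0$ hosts both the global $\alpha$-assignment and many internal $\lambda$-assignments with Parter vertices $Q \setminus \{v_1\}$. A direct count gives $c_\lambda(T_0) = M(T) - \deg(v_1) + 2$ (from the $M(T) - \deg(v_1) + |Q| + 1$ internal $\lambda$-components minus the $|Q| - 1$ internal Parter vertices) and $c_\alpha(T_0) = 1$, while $|T_0| \geq 2|Q| + M(T) - \deg(v_1)$ by counting the Parter vertices in $Q \setminus \{v_1\}$ plus at least one vertex from each path component of $T_0 - (Q \setminus \{v_1\})$. Since $|Q| \geq 2$, we obtain $c_\lambda(T_0) + c_\alpha(T_0) = M(T) - \deg(v_1) + 3 \leq |T_0|$, with easier checks at all other subtrees. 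Once the assignment is validated, Corollary \ref{alltwoeigassignmentsarerealizable} produces the realizing matrix and completes the contradiction.
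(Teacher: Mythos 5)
Your proposal is correct and follows essentially the same route as the paper: the same pigeonhole/overloading argument at the center of $S_n$ for sufficiency, and for necessity the same case split into a non-simple generalized star (doubly loading the one long arm) versus multiple HDVs (peripheral HDVs in an RPM set $Q$, with $\lambda$ on all but two branches at $v_1$ and $\alpha$ on those two plus the HDV-containing branch $T_0$), realized via Corollary \ref{alltwoeigassignmentsarerealizable}. Your write-up is in fact tighter in two places where the paper is loose: the arm count in the generalized-star case and the explicit verification $c_\lambda(T_0)+c_\alpha(T_0)\leq|T_0|$, which the paper only asserts.
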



\begin{corollary}\label{3nimcharacterization} Let $T$ be a tree and $M(T)\geq 4$. $T$ is $3$-NIM if and only if for each HDV $v$ of $T$, \begin{itemize}
    \item[\textbf{\textsc{(i)}}] no components of $T-v$ has more than one vertex, and
    \item[\textbf{\textsc{(ii)}}] $\delta(v)=\deg_T(v)-\deg_H(v)\geq 5$.
\end{itemize}\end{corollary}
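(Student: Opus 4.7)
The plan is to deduce this corollary directly from Theorem~\ref{3nimsimplestar} by checking that, under the standing hypothesis $M(T)\geq 4$, the graph-theoretic conditions \textbf{\textsc{(i)}} and \textbf{\textsc{(ii)}} are equivalent to $T$ being a simple star $S_n$ with $n\geq 6$. No new spectral input beyond Theorem~\ref{3nimsimplestar} should be required; the argument is essentially a translation between the combinatorial conditions and the star classification.

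For the forward direction, the plan is to start from $T$ being $3$-NIM. Theorem~\ref{3nimsimplestar} immediately yields $T=S_n$ for some $n$; since $M(S_n)=n-2\geq 4$, we have $n\geq 6$. The unique HDV is the center $v$ with $\deg_T(v)=n-1\geq 5$ and $\deg_H(v)=0$, so every component of $T-v$ is a pendant singleton (verifying \textbf{\textsc{(i)}}), and $\delta(v)=\deg_T(v)-\deg_H(v)=n-1\geq 5$ (verifying \textbf{\textsc{(ii)}}).

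For the reverse direction, the plan is to first translate \textbf{\textsc{(i)}} into structural information about $T$. Applied to any HDV $v$, \textbf{\textsc{(i)}} forces every neighbor of $v$ to be a leaf, since a non-leaf neighbor $w$ would have some other neighbor $w'\neq v$ and hence the component of $T-v$ containing $w$ would have size at least $2$. This rules out adjacent HDVs and, more generally, rules out $T$ having two HDVs at all: a path joining two HDVs would contain a vertex adjacent to an HDV that is not a leaf. Combined with $M(T)\geq 4$ (which excludes $T$ being a path, where $M(T)=1$), this forces $T$ to have exactly one HDV $v$, all of whose neighbors are leaves, and therefore $T\cong S_{\deg_T(v)+1}$. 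Applying \textbf{\textsc{(ii)}} (with $\deg_H(v)=0$) gives $\deg_T(v)\geq 5$, i.e.\ $n\geq 6$, so $M(T)=n-2\geq 4$. Theorem~\ref{3nimsimplestar}, whose proof in fact shows that every simple star with $M(T)\geq 4$ is $3$-NIM, then concludes that $T$ is $3$-NIM.

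The main ``obstacle'' here is purely bookkeeping: making sure that the deductions from \textbf{\textsc{(i)}} really do pin down $T$ as a star (handling the path subcase and the multi-HDV subcase cleanly), and that the numerical threshold $\delta(v)\geq 5$ in \textbf{\textsc{(ii)}} aligns with $M(S_n)=n-2\geq 4$. Everything else is inherited from Theorem~\ref{3nimsimplestar}.
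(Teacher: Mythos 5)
Your proof is correct and takes essentially the same approach as the paper's: in both directions you translate conditions \textbf{\textsc{(i)}} and \textbf{\textsc{(ii)}} into the statement that $T$ is a simple star with center of degree at least $5$, and then invoke Theorem~\ref{3nimsimplestar}. Your version is actually a bit more careful than the paper's (which is terse, appeals informally to ``central path length,'' and writes $n\geq 5$ where $n\geq 6$ is the threshold consistent with $\deg_T(v)\geq 5$ and $M(S_n)=n-2\geq 4$); in particular your explicit argument that \textbf{\textsc{(i)}} forces every neighbor of an HDV to be a leaf, hence rules out a second HDV, cleanly pins $T$ down as a star.
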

\begin{proof}
Let $T$ be $S_n$ ($n \geq 5$). It is easy to see that when the HDV of $S_n$ is removed, all components are singletons and that because our HDV has at least degree 5, we'll have $\deg_T(v)-\deg_H(v)\geq \deg_T(v)-0 \geq 5$. For the backward direction, we take a tree that satisfies \textbf{(i)} and \textbf{(ii)}. This tree must have central path length of at most 1 since \textbf{(i)} fails otherwise. Next, all HDVs must have at least degree 5 to satisfy \textbf{(ii)}. The only trees that do both are simple stars $S_n$ for $n \geq 5$. Thus, \textbf{(i)} and \textbf{(ii)} imply that a tree is 3-NIM.  
\end{proof}

\subsection{Nonexistence of \texorpdfstring{$k$}{k}-NIM trees for \texorpdfstring{$k\geq 4$}{k geq 4}}
In this subsection, we verify Theorem \ref{mastertheorem} for $k\geq 4$ by showing that no $k$-NIM trees exist for $k\geq 4$.

\begin{lemma}\label{existenceofperipheralHDVs} Every RPM set for graphs with $M(T)\geq 5$ has a \textit{peripheral HDV}, that is, an HDV of $T$ at which there is a branch containing all other HDVs.

\begin{proof}

Let $Q$ be an RPM set. Let $P$ be a path in $T$ with the most vertices from $Q$ possible. Either of the two HDVs closest to the end-vertices of the path is peripheral since if two of its branches contained an HDV, then the path could be extended to one containing more HDVs.
\end{proof}

\end{lemma}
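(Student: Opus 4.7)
The plan is to follow the authors' path-extension strategy. If $T$ has at most one HDV, the sole HDV (if any) is vacuously peripheral, so I may assume $T$ has at least two HDVs. Given an RPM set $Q$ of $T$, I would first reduce to the case that $Q$ consists of HDVs (possible because $M(T)\geq 2$: a non-HDV in $Q$ can be shown to be removable, or swappable with an adjacent HDV, without decreasing $p-q$, contradicting maximality). Then I pick an induced path $P$ in $T$ lexicographically maximizing the pair $(|P\cap Q|,\ \#\{\text{HDVs of }T\text{ on }P\})$. Since any two HDVs lie on a common induced path in the tree, the optimal $P$ contains at least two HDVs.

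Let $v$ be the HDV on $P$ closest to an endpoint $e$ of $P$; by this choice, the subpath of $P$ strictly between $e$ and $v$ contains no HDV. I would claim $v$ is peripheral, arguing by contradiction. Suppose at least two branches of $T$ at $v$ each contain an HDV. One such branch, call it $B_{e'}$, carries the continuation of $P$ past $v$ toward the other endpoint $e'$ (since further HDVs of $P$ lie there), so there exists another branch $B$ at $v$, distinct from $B_{e'}$, containing some HDV $w$. Replace the $e$-side of $P$ with an induced path in $B$ from $v$ through $w$ out to a leaf of $B$ beyond $w$; since trees are uniquely path-connected, the concatenation yields an induced path $P'$ in $T$. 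Then $P'$ retains every HDV of $P$ on the $e'$-side, contains $v$, and contains the additional HDV $w$, so $P'$ has strictly more HDVs than $P$. Moreover, $|P'\cap Q|\geq|P\cap Q|$, because the removed $e$-side of $P$ held no $Q$-vertex (using $Q\subseteq\{\text{HDVs}\}$ and the fact that no HDV lies strictly between $e$ and $v$). This gives either a strict improvement on the tiebreaker, or on $|P\cap Q|$ itself when $w\in Q$, contradicting maximality of $P$. Hence $v$ is peripheral, as required.

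The main obstacle is reconciling the statement's maximization criterion (``most vertices from $Q$'') with the HDV-count improvement that actually drives the contradiction: on its own, maximizing $|P\cap Q|$ need not force the extension at $v$ to strictly improve $|P\cap Q|$. Introducing the HDV-count tiebreaker, together with the reduction $Q\subseteq\{\text{HDVs of }T\}$, is what guarantees that the extension is a genuine strict improvement and thus delivers the contradiction. A minor secondary point is that $P'$ must be an induced path in $T$; this is automatic because trees are uniquely path-connected, so concatenating two edge-disjoint branch paths through $v$ produces no chord.
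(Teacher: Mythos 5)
Your proof is correct and follows essentially the same path-extension strategy as the paper: pick a maximizing induced path, take the HDV on it nearest an endpoint, and derive a contradiction from the possibility of extending through a second HDV-bearing branch. You have, however, correctly diagnosed a real gap in the paper's one-sentence argument: the paper maximizes $|P\cap Q|$ but then claims a contradiction from an extension that produces a path with \emph{more HDVs}, which does not by itself contradict maximality of $|P\cap Q|$. Your lexicographic tiebreaker, combined with the reduction $Q\subseteq\{\text{HDVs}\}$, repairs this.

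Two small remarks. First, the reduction ``$Q$ consists of HDVs'' is plausible (one can show a degree-$2$ vertex $u\in Q$ may be dropped, since removing $u$ from $T-(Q\setminus\{u\})$ merges exactly two residual paths, leaving $p-q$ unchanged), but you assert it rather than prove it; in a write-up you should either supply the short argument or cite something like the paper's Theorem~\ref{monograph429} in the analogous role. Second, and more structurally: the RPM set $Q$ is actually superfluous to the conclusion being proved, which is only that $T$ itself possesses a peripheral HDV. One can drop $Q$ entirely and simply pick an induced path maximizing the number of HDVs it contains; the extension argument then yields the contradiction immediately with no tiebreaker and no reduction step. Your version is correct, but this observation would let you simplify both your proof and the paper's.
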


\begin{theorem}\label{notreehigherthan4nim}
 No tree is $k$-NIM for $k \geq 4$.
\begin{proof}
We have two cases:
\begin{itemize}
\item First, suppose $T$ is a generalized star. Then the only RPM set is the singleton $\{v\}$, and its removal leaves $\deg_T(v)$ paths, so that $M(T) = \deg_T(v)-1$. We assign $M(T)-2 = d(v)-3$ of those paths $\lambda_1$, and $3$ of the remaining paths $\lambda_2$, in total assigning all paths an eigenvalue without any overloading, yielding a valid assignment. Then by Corollary \ref{alltwoeigassignmentsarerealizable} this assignment is realized by some matrix $A\in\mathcal{S}(T)$.

\item Now suppose instead $T$ has more than one HDV. By the proof of Lemma \ref{existenceofperipheralHDVs}, $T$ has some two peripheral HDVs $v_1$ and $v_2$, each with at least two pendant paths. Then $v_1$ and $v_2$ must be in an RPM set $Q$ for $T$. Letting $v$ be one of $v_1$ or $v_2$, we now construct the following assignment for $(M(T)-3, 2, 1^{n - M(T)-5})$, with $\lambda_1, \lambda_2$ corresponding to the two high multiplicities. $T-Q$ leaves $M(T)+|Q|$ paths, and to $M(T)+|Q|-3$ of them we assign $\lambda_1$ in such a way that $\lambda_1$ is not assigned to at least two of the pendant paths at $v$. Denote by $\mathcal{A}_1$ these $M(T)+|Q|-3$ chosen paths, and let $V_1 = Q$. 

Next, set $V_2 = \{v\}$ and assign $\lambda_2$ to two of the non-assigned branches of $T-v$, as well as the entire subtree containing all other vertices of $Q$ (which is well-defined since $v$ is peripheral). Denote by this assignment $\mathcal{A}_2$. No overloading can occur, since the removal of the other peripheral HDV for $V_1$ allows another degree of choice for us to insert $\lambda_2$ as prescribed by the overloading formula. No other subtrees have any potential to be overloaded because $V_1, V_2$ partition the remainder of the tree into paths. We have then given a valid assignment, so by Lemma \ref{alltwoeigassignmentsarerealizable} it is realizable.
\end{itemize}

In both of the above cases, we can find a matrix $A$ which realizes the multiplicity list $(M(T)-3, 2, 1^{n - M(T)-5})$, and thus $T$ is not $4$-NIM.
\end{proof}
\end{theorem}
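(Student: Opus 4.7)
Fix $k \geq 4$ and let $T$ be any tree with $M(T) \geq k+1$. The plan is to exhibit a multiplicity list of the form $(M(T)-(k-1),2,1,\ldots,1)$ in $\mathcal{L}(T)$: since $M(T)-(k-1)\geq 2$ and this list contains a second non-simple entry, its realization will immediately contradict the $k$-NIM property. I would build a valid two-eigenvalue assignment for this list by hand and invoke Corollary~\ref{alltwoeigassignmentsarerealizable} to realize it. Following the structural dichotomy used in the $k=2,3$ proofs, I would split into the cases where $T$ is a generalized star and where $T$ has at least two HDVs.

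In the generalized-star case with center $v$, the only RPM set is $\{v\}$ and $M(T)=\deg_T(v)-1\geq k+1$, so $\deg_T(v)\geq k+2$. Setting $V_1=V_2=\{v\}$, I would assign $\lambda_1$ to any $M(T)-(k-1)=\deg_T(v)-k$ arms of $T$ at $v$ and $\lambda_2$ to three of the remaining $k$ arms; since $(\deg_T(v)-k)+3\leq \deg_T(v)$ and every arm receives at most one eigenvalue, no overloading occurs. In the multi-HDV case, Lemma~\ref{existenceofperipheralHDVs} (together with the argument inside its proof) supplies two peripheral HDVs $v_1,v_2$, each with $\geq 2$ pendant paths, lying in some RPM set $Q$, so $T-Q$ is a forest of $M(T)+|Q|$ paths. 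I would assign $\lambda_1$ to $M(T)+|Q|-(k-1)$ of these paths, choosing them so that at least two pendant paths at $v_1$ remain unassigned (possible because we only need to spare $2\leq k-1$ paths). Taking $V_1=Q$ for $\lambda_1$ and $V_2=\{v_1\}$ for $\lambda_2$, I would then assign $\lambda_2$ to those two reserved pendant paths plus the unique component of $T-v_1$ containing $Q\setminus\{v_1\}$.

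The main obstacle I foresee is the no-overloading verification for the large component $S$ of $T-v_1$ that contains $Q\setminus\{v_1\}$ and receives both $\lambda_1$ and $\lambda_2$. Here $S$ contains the $|Q|-1\geq 1$ Parter vertices of $V_1$ outside $v_1$, so the contribution $c_1(S)$ is offset by these Parter vertices inside, while $c_2(S)$ adds only one to $\sum_i\max(0,c_i(S))$; a short size bound shows this stays within $|S|$. Once this check is in place, both cases give a valid assignment, realized by Corollary~\ref{alltwoeigassignmentsarerealizable}, so no tree is $k$-NIM for any $k\geq 4$.
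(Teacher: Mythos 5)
Your proposal takes essentially the same route as the paper: the same dichotomy (generalized star versus at least two HDVs), the same two-eigenvalue assignment built from an RPM set and a peripheral HDV supplied by Lemma \ref{existenceofperipheralHDVs}, and the same appeal to Corollary \ref{alltwoeigassignmentsarerealizable}. In one respect yours is more complete: you target $(M(T)-(k-1),2,1,\dots,1)$ for arbitrary $k\geq 4$, whereas the paper's construction explicitly realizes only $(M(T)-3,2,\dots)$ and concludes that $T$ is not $4$-NIM, leaving $k\geq 5$ to the same (unstated) generalization you carry out.

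There is, however, an off-by-one in your generalized-star case. With $V_1=\{v\}$ a Parter set of size one, condition (1b) of an assignment, $|\mathcal{A}_1|=p_1+|V_1|$, means that to realize $p_1=M(T)-(k-1)$ you must assign $\lambda_1$ to $M(T)-(k-1)+1=\deg_T(v)-k+1$ arms, not to $\deg_T(v)-k$ arms as you wrote; as written, your assignment realizes multiplicity $M(T)-k$, which does not contradict $k$-NIM-ness. The repair is immediate: after assigning $\deg_T(v)-k+1$ arms to $\lambda_1$, exactly $k-1\geq 3$ arms remain, so three of them can still take $\lambda_2$ and no arm is assigned twice. (Compare the paper, which assigns $M(T)-2=\deg_T(v)-3$ arms to obtain multiplicity $M(T)-3$.) Your multi-HDV count, $|\mathcal{A}_1|=M(T)+|Q|-(k-1)$ with $V_1=Q$, is correct, and your sketch of the overloading check on the large component $S$ of $T-v_1$ (the $|Q|-1\geq 1$ Parter vertices of $V_1$ inside $S$ offsetting $c_1(S)$, with $c_2(S)=1$) is sound and, if anything, more explicit than the paper's own justification of that step.
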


\section{Enumeration of \texorpdfstring{$k$}{k}-NIM trees}

In this section we enumerate $k$-NIM trees for all $k\geq 1$. The case $k=4$ is easy---by Theorem \ref{notreehigherthan4nim} there is nothing to count. Additionally, by Theorem \ref{3nimsimplestar}, there is exactly one $3$-NIM tree on $n$ vertices for all $n\geq 5$, namely $S_n$, and no trees are $3$-NIM for $n\leq 5$. Thus, only the $k=1,2$ cases remain. 

\subsection{Enumeration of 2-NIM trees}
In this subsection we enumerate $2$-NIM trees on $n\geq 1$ vertices. Though still nontrivial, Lemma \ref{2nim2hdv} imposes significant structure on $2$-NIM trees. The $2$-NIM sequence, $(N_2(n))_{n\geq 1}$, goes as follows:
\begin{equation*}
    (0,0,0,0,1,2,3,4,6,9,12,16,20,25,30,36,42,49,56,\dots)
\end{equation*} 
\begin{theorem}
For $n \geq 8$, the number of $2$-NIM trees on $n$ vertices is given by
\begin{equation*}N_2(n)=\left\lfloor\frac{n-4}{2}\right\rfloor\left\lceil\frac{n-4}{2}\right\rceil\end{equation*}
\end{theorem}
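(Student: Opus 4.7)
The plan is to combine the structural characterization of $2$-NIM trees (Theorem \ref{mastertheorem}) with the HDV bound in Lemma \ref{2nim2hdv} to classify $2$-NIM trees by number of HDVs, parametrize each class explicitly, and sum. Since a $2$-NIM tree is not a path and has at most two HDVs, every such tree has exactly one or two HDVs.

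In the \emph{one-HDV case}, condition \textbf{\textsc{(i)}} forces the HDV $v$ to have all pendant arms except possibly one, and $M(T)\geq 3$ together with condition \textbf{\textsc{(ii)}} gives $\deg(v)\geq 4$. So $T$ is either a simple star (all arms pendant) or a broom (all but one arm pendant, one arm a longer path), determined by $d:=\deg(v)\in\{4,\dots,n-1\}$, giving exactly $n-4$ trees. In the \emph{two-HDV case}, condition \textbf{\textsc{(i)}} at each HDV forces $T$ to consist of two HDVs $v_1,v_2$ joined by a path of length $\ell\geq 1$ through degree-$2$ vertices, with $p_i:=\deg(v_i)-1$ pendant neighbors at each HDV and total $n=p_1+p_2+\ell+1$. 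Such a tree is determined by $\ell$ together with the unordered pair $\{p_1,p_2\}$. The degree requirement from \textbf{\textsc{(ii)}}, noting that $\deg_H(v_i)=1$ if and only if $\ell=1$, forces $p_i\geq 4$ when $\ell=1$ and $p_i\geq 3$ when $\ell\geq 2$.

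The count now reduces to enumerating unordered pairs of nonnegative integers with fixed sum and individual lower bound: for sum $s$ with each coordinate at least $a$, the number of unordered pairs is $\lfloor (s-2a)/2\rfloor+1$ when $s\geq 2a$ and $0$ otherwise. Summing this over $\ell$ in the two-HDV case (treating $\ell=1$ separately because of its stricter bound) and adding the $n-4$ contribution from the one-HDV case, the resulting expression simplifies by the elementary identity
\begin{equation*}
\sum_{j=0}^{J}\left\lfloor\tfrac{j}{2}\right\rfloor=\left\lfloor\tfrac{J}{2}\right\rfloor\left\lceil\tfrac{J}{2}\right\rceil,
\end{equation*}
verified by splitting on the parity of $J$. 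A final parity split on $m:=n-4$ then collapses the whole sum to $\lfloor m/2\rfloor\lceil m/2\rceil$, as claimed.

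The main obstacle I anticipate is the two different lower bounds on $p_i$ in the two-HDV case, which require the $\ell=1$ term to be extracted from the sum and added back by hand before the clean parity argument can be applied; this is also the reason the formula begins to hold only at $n=8$, where the $\ell=1$ pairs with both coordinates at least $4$ first become realizable. As a sanity check, at $n=8,9,10,11,12$ the enumeration produces $4,6,9,12,16$ respectively, matching the sequence given at the start of the section.
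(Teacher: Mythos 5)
Your decomposition is exactly the one the paper uses (the paper's proof is essentially just the figures plus the identity $n-4+\lfloor\frac{n-8}{2}\rfloor+\sum_{k=1}^{n-8}\lfloor\frac{n-6-k}{2}\rfloor=\lfloor\frac{n-4}{2}\rfloor\lceil\frac{n-4}{2}\rceil$): split by one versus two HDVs via Lemma \ref{2nim2hdv}, parametrize the brooms by the center's degree and the double brooms by the bridge length $\ell$ and the unordered pair $\{p_1,p_2\}$, and sum. Your three contributions match the paper's three terms exactly (your $\ell\geq 2$ sum is the paper's sum with $k=\ell-1$), so the arithmetic and the final formula are right.

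The one step that does not follow as written is your derivation of the lower bounds on the $p_i$. With $p_i=\deg(v_i)-1$, condition \textbf{\textsc{(ii)}} as stated in Theorem \ref{mastertheorem} for $k=2$ reads $\delta(v_i)\geq 3$, which gives only $p_i\geq 3$ when $\ell=1$ and $p_i\geq 2$ when $\ell\geq 2$ --- one less than what you claim in each case. Those weaker bounds would admit, for instance, the double star with two adjacent degree-$4$ centers at $n=8$ and the double brooms with $p_1=p_2=2$, yielding $N_2(8)=7$ rather than $4$; and indeed those trees are \emph{not} $2$-NIM (one can check the $(3,3)$ double star carries a valid two-eigenvalue assignment for $(3,2,1,1,1)$, realizable by Corollary \ref{alltwoeigassignmentsarerealizable}). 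The bounds you actually use correspond to $\delta(v_i)\geq 4=k+2$, which is what the paper's own proof of the $k=2$ characterization invokes (Case 2a reads ``by \textbf{\textsc{(ii)}} we have $\delta(v_i)\geq 4$'') and what the listed sequence requires; the ``$\geq k+1$'' in the statement of Theorem \ref{mastertheorem} appears to be an error. So your count is correct, but you should either cite the condition as $\delta(v)\geq k+2$ or justify the stronger degree bounds directly rather than attributing them to \textbf{\textsc{(ii)}} as stated.
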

\begin{proof}
By the Theorem \ref{mastertheorem}, a 2-NIM tree $T$ on $n$ vertices must take one of the following forms:

\hspace{0.35in}
\begin{figure}[h]
\vspace{-0.3in}
\begin{subfigure}{0.5\textwidth}
\hspace{-0.8in}
\includegraphics[scale = 0.15]{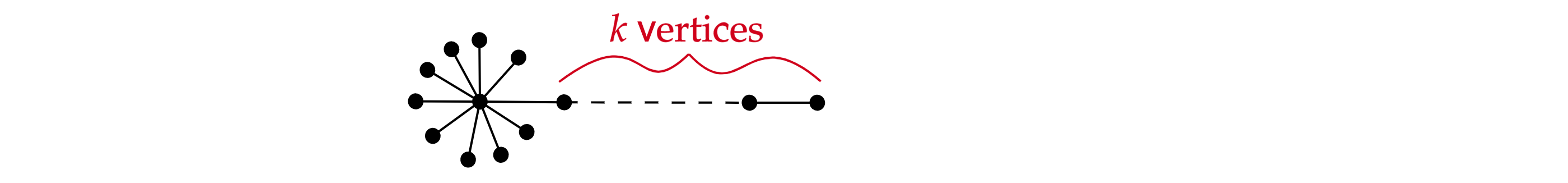}
\end{subfigure}
\hspace{-1in}
\begin{subfigure}{0.5\textwidth}
\includegraphics[scale = 0.15]{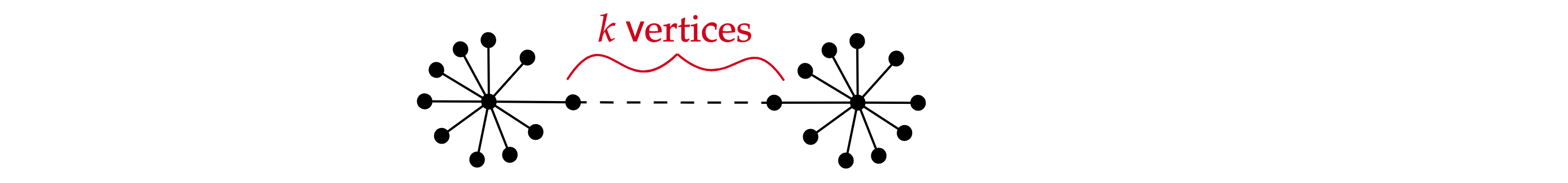} 
\end{subfigure}
\vspace{-0.15in}
\end{figure}
\vspace{-0.05in}

\noindent Straightforward computation yields the result for $n=8,9$. For $n \geq 10$, counting the number of such trees with $n$ vertices yields the formula $n-4 +\left\lfloor \frac{n-8}{2} \right\rfloor +\sum_{k = 1}^{n-8} \left\lfloor \frac{n - 6 - k}{2} \right\rfloor=\left\lfloor\frac{n-4}{2}\right\rfloor\left\lceil\frac{n-4}{2}\right\rceil$.
\end{proof}

\subsection{Enumeration of NIM Trees}

In this subsection, the characterization of NIM (1-NIM) trees in Theorem \ref{nimcharacterization} is be used to enumerate the NIM trees on $n$ vertices, which we denote by $N(n)$. See Figure \ref{fig:nimTable} for a list of computed values. 

This section assumes knowledge of the symbolic method for ordinary generating functions (OGFs). (See Chapters I and III of \cite{flajolet_sedgewick_2013}.)

\begin{figure}[h]
\begin{center}
\begin{tabular}{||l|l||l|l||l|l||l|l||}
\hline
$n$ & $N(n)$ & $n$ & $N(n)$ & $n$ & $N(n)$& $n$ & $N(n)$\\
\hline\hline
 1  & 1    & 14   & 433      & 27 & 1044986        & 40 & 2629706159         \\
 2  & 1    & 15   & 778      & 28 & 1908527        &41 & 4803167836\\
 3  & 1    & 16   & 1416     & 29 & 3485092        & 42 & 8773051285\\
 4  & 2    & 17   & 2564     & 30 & 6365294        & 43 & 16024058213 \\
 5  & 3    & 18   & 4676     & 31 & 11624741       & 44 & 29268175856\\
 6  & 5    & 19   & 8498     & 32 & 21232255       & 45 & 53458678277\\
 7  & 8    & 20   & 15507    & 33 & 38778177       & 46 & 97643083938\\
 8  & 14   & 21   & 28246    & 34 & 70828006       & 47 & 178346438250 \\
 9  & 24   & 22   & 51568    & 35 & 129363233      & 48 & 325752518305\\
 10 & 43   & 23   & 94049    & 36 & 236282260      & 49 & 594991757076\\
 11 & 74   & 24   & 171734   & 37 & 431563697      & 50 & 1086761711574\\
 12 & 134  & 25   & 313417   & 38 & 788254745      & 51 & 1984986757597        \\
 13 & 238  & 26   & 572377   & 39 & 1439742242     & 52 & 3625609406618              \\
\hline
\end{tabular}
\end{center}\caption{Values of the NIM sequence up to $n=52$. This sequence can now be found at OEIS \href{https://oeis.org/A347018}{\textbf{A347018}.}}
\label{fig:nimTable}\end{figure}

\begin{definition}[variable HDV] Given a tree $T$, a \emph{variable HDV} in $T$ is a vertex $v$ of $T$ such that $\deg_T(v)\geq 4$. These will sometimes be marked red in figures involving atoms and skeletons, which are defined below. These are marked by $\mathcal{U}$.
\end{definition}

\begin{definition}[bridge/bridge point] Given a tree $T$ on a central path (such as a caterpillar or NIM tree), a \emph{bridge} in $T$ is a path of length $\geq 1$ induced in $T$ by the vertices with $\deg_T(v)\leq 2$ on the central path of $T$. The length of a bridge is defined by its number of edges. A \emph{bridge point} is a bridge of length $0$ (so a single vertex vertex on the central path with $\deg_T(v)=2$ but with $\delta(v)=0$). These will sometimes be marked blue in figures involving atoms and skeletons, which are defined below. These will soon be marked by $\mathcal{J}$, but initially, we let $\mathcal{J}$ denote the number of components.
\end{definition}

\begin{definition}[atoms] An \emph{atom} is any element of the combinatorial class $\mathcal{A}$, defined in Figure \ref{fig:atoms} below. In particular, the $i$th smallest atom for $i\geq 3$ is marked by $\mathcal{Z}^{4i}\times\mathcal{U}^i\times\mathcal{J}$, with $\mathcal{Z}$ marking vertices.
\end{definition} 

\begin{figure}[h]
    \centering
    \hspace{-0.1in}\includegraphics[scale=0.15]{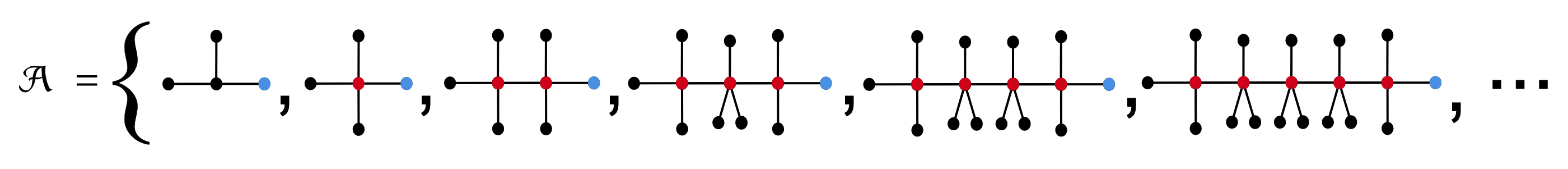}\vspace{-0.1in}
    \caption{The atoms $\mathcal{A}$. The larger atoms follow the obvious pattern.}
    \label{fig:atoms}
\end{figure}

\begin{definition}[molecule] A \emph{molecule} is an undirected sequence\footnote{The use of the term ``undirected sequence'' here indicates that two molecules should be considered distinct only if they are not isomorphic to each other under reflection about the center of their central paths.} of atoms that results from vertex-concatenating atoms to each other at the ends of the central paths. (This is well-defined since each atom is symmetric.)
\end{definition}

\begin{definition}[descendant of a molecule] A tree $T$ is a \emph{descendant} of a molecule $T^*$ if $T$ can be built from $T^*$ by adding bridges at bridge points and pendants (possibly none) at variable HDVs. 

The \emph{descendant class} of $T^*$, denoted $D(T^*)$, is the set of all $T$ that are descendants of $T^*$. We consider $T^*$ to be its own descendant.
\end{definition}

\begin{example} The following figure shows the molecule $T^*$ (left) marked by $\mathcal{Z}^{15}\times \mathcal{U}^3\times\mathcal{J}^3$ (composed of the atoms $\mathcal{Z}^{12}\times\mathcal{U}^3\times\mathcal{J}$ and $\mathcal{Z}^3\times \mathcal{J}$) and one of its descendants $T\in D(T^*)$ (right) generated by the following substitution at variable HDVs and bridge points of the molecule $T^*$:
\begin{equation*}
    T=T^*[\mathcal{U}^3\mapsto (\mathcal{Z},\mathcal{Z},\mathcal{Z}^4), \mathcal{J}^3\mapsto(\* 1,\mathcal{Z}^4,\mathcal{Z}^2)].
\end{equation*} 
\vspace{-0.35in}
\begin{figure}[h]
    \centering
    \vspace{-0.125in}
    \includegraphics[scale=0.15]{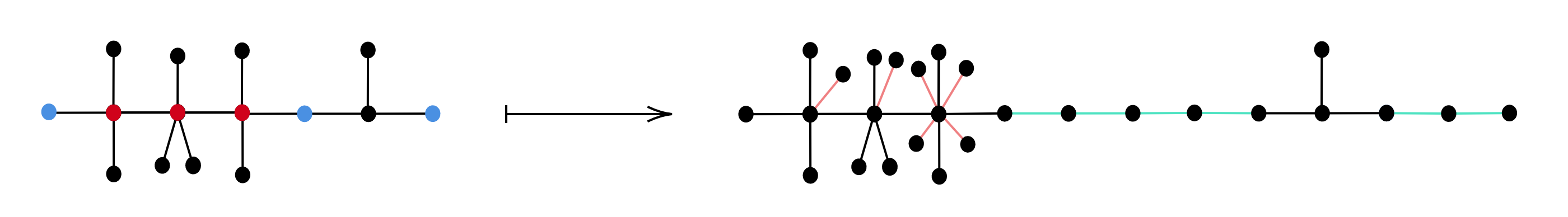}
    \vspace{-0.1in}\caption{From $T^*$ (left), we construct $T\in D(T^*)$ (right) determined by $\mathcal{Z}^{15}\times\left(\mathcal{Z},\mathcal{Z},\mathcal{Z}^4\right)\times\left(\*1,\mathcal{Z}^4,\mathcal{Z}^2\right)$.}
    \label{fig:descendantconstruction}
\end{figure}
\end{example}
\subsubsection{Construction of NIM Trees}

Since there is one path on each $n\geq 1$, we can simply add one at the end after enumerating the non-path NIM trees. We will set aside the paths, which are trivially NIM trees, until the end of the enumeration. Thus, to ease the enumeration, \textit{NIM trees will refer to all non-path trees with the NIM property unless otherwise stated}.

\begin{definition}[NIM skeleton/skeleton class]
Given a NIM Tree $T$, let the \emph{NIM skeleton} of $T$ be the tree $T^*$ that results from an application of Algorithm \ref{nimskeletonalg} below. Define the \emph{skeleton class} of $T$, denoted $\text{Sk}(T^*)$, as the set of all NIM trees $T$ whose NIM skeleton is $T^*$.
\end{definition}

\begin{algorithm}\label{nimskeletonalg} The following algorithm gives the NIM skeleton of a NIM tree $T$.
\begin{itemize}
    \item[1.] Edge contract every pair of adjacent  degree $2$ vertices until no such pairs remain. 
    
    \item[2.] Every induced path in the subgraph of $T$ induced by the (normal) HDVs of $T$ corresponds to the atom in $\mathcal{A}$ with the same number of \emph{variable} HDVs on its central path (possibly $0$ as in the case of the smallest atom). Replace each of these components with their corresponding atom.
\end{itemize}
The resulting tree, denoted by $T^*$, is defined to be the NIM skeleton of $T$.
\end{algorithm}

The following propositions should be clear from Algorithm \ref{nimskeletonalg}:

\begin{proposition} Let $T$ be a NIM tree. Then there is a unique skeleton $T^*$ such that $T^*$ results from applying Algorithm \ref{nimskeletonalg} to $T$.
\label{nimhasuniquenimskeleton}
\end{proposition}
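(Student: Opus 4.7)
The plan is to show the output $T^*$ of Algorithm \ref{nimskeletonalg} depends only on the input tree $T$, not on any implementation choices made during execution. The only source of potential ambiguity is the ordering of edge contractions in Step 1; Step 2 is essentially pattern-matching on the HDV-induced subgraph and has no hidden choices.

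For Step 1, I would first isolate, for each NIM tree $T$, the maximal paths $P = v_0 w_1 w_2 \cdots w_m v_{m+1}$ whose internal vertices $w_1, \ldots, w_m$ all have degree $2$ in $T$ and whose endpoints $v_0, v_{m+1}$ are either HDVs or pendants. Every pair of adjacent degree-$2$ vertices lies inside some such $P$, so any two contractions in different maximal paths involve disjoint vertex sets and therefore commute trivially. Within a single $P$, I would note that contracting an edge between two degree-$2$ vertices produces a new degree-$2$ vertex, so HDV status is never created or destroyed by Step 1. A short induction on $m$ then shows that every maximal contraction sequence inside $P$ terminates in the same configuration: $P$ is left unchanged when $m \leq 1$, and collapses to a single interior degree-$2$ vertex (a bridge point) between $v_0$ and $v_{m+1}$ when $m \geq 2$. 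Hence Step 1 has a unique outcome.

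For Step 2, by the NIM characterization (Theorem \ref{mastertheorem} with $k=1$), each HDV $v$ of $T$ has at most two components of $T-v$ with more than one vertex, so $v$ has at most two HDV neighbors. Therefore the subgraph of the tree (after Step 1) induced on the HDVs has maximum degree $\leq 2$; being acyclic, it decomposes uniquely as a disjoint union of induced paths. Each such path has a definite number of variable HDVs along its central path and thus matches exactly one atom in $\mathcal{A}$. Replacing each component by its matching atom is then unambiguous.

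Combining these two observations yields that $T^*$ is uniquely determined by $T$, proving the proposition. The main technical point is the confluence/commutativity argument in Step 1; if desired, this can be sidestepped entirely by describing Step 1 choice-free as the quotient of $T$ that identifies all interior vertices of each maximal degree-$2$ subpath of length $\geq 2$, and verifying that the Algorithm's output agrees with this canonical construction. The remaining details are routine bookkeeping.
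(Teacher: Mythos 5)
The paper offers no proof of this proposition, stating only that it ``should be clear from Algorithm~\ref{nimskeletonalg}.'' Your argument supplies the missing justification and is correct: the two possible sources of ambiguity in the algorithm are exactly the ones you isolate, and you handle each appropriately.

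For Step~1, the key observation—that contracting an edge joining two degree-$2$ vertices yields a single degree-$2$ vertex, so contractions within a maximal degree-$2$ subpath are confluent and contractions in distinct maximal subpaths act on disjoint vertex sets—is precisely what one needs. Your closing remark that Step~1 can be described choice-free as the quotient identifying all interior vertices of each maximal degree-$2$ subpath of length $\geq 2$ is the cleanest way to package this, and arguably preferable to the induction. For Step~2, you correctly invoke condition~\textsc{(i)} of Theorem~\ref{mastertheorem} (with $k = 1$) to see that each HDV has at most two HDV neighbors, since an HDV neighbor of $v$ necessarily lies in a component of $T - v$ with more than one vertex; hence the HDV-induced subgraph is a vertex-disjoint union of paths, and the replacement by atoms is determined by counting variable HDVs on each path. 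In short, this is a correct and welcome elaboration of a claim the paper leaves entirely to the reader.
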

\begin{proposition} $T^*$ is a NIM skeleton if and only if $T^*$ is a molecule.\label{singlenimskeletoniffdescendant}



\end{proposition}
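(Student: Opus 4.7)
The plan is to prove both directions of this equivalence as direct consequences of Algorithm \ref{nimskeletonalg}, coupled with the NIM characterization (Theorem \ref{mastertheorem} specialized to $k=1$).

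For the forward direction, suppose $T$ is a NIM tree with NIM skeleton $T^*$. I would first invoke Theorem \ref{mastertheorem} with $k=1$: every HDV $v$ of $T$ has at most $3-1=2$ components of $T-v$ with more than one vertex, which forces all HDVs of $T$ to lie along a single central path (otherwise some HDV would have three non-singleton branches, contradicting (i)). After Step 1 of Algorithm \ref{nimskeletonalg} contracts every pair of adjacent degree-2 vertices, the only surviving degree-2 vertices along the central path are isolated bridge points between consecutive HDVs. Step 2 then identifies each maximal induced subpath of adjacent HDVs by its number of variable HDVs and replaces it with the corresponding atom from $\mathcal{A}$. The resulting $T^*$ is then a sequence of atoms strung end-to-end along the central path and meeting at their endpoint HDVs via bridge points, which is exactly the vertex-concatenation of atoms defining a molecule.

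For the backward direction, suppose $T^*$ is a molecule. The plan is to exhibit a NIM tree $T\in D(T^*)$ whose skeleton is $T^*$, which suffices. Let $T$ be any descendant of $T^*$ obtained by attaching enough pendant vertices at each variable HDV of $T^*$ so that $\delta(v)\geq 2$ holds for every HDV of $T$ (bridges at bridge points may be kept at length $1$ or enlarged). Then $T$ satisfies condition \textbf{\textsc{(i)}} of Theorem \ref{mastertheorem} because all HDVs still lie on the central path provided by the concatenation, and it satisfies condition \textbf{\textsc{(ii)}} by the pendant-addition choice; hence $T$ is NIM. When Algorithm \ref{nimskeletonalg} is applied to $T$, Step 1 contracts any added degree-2 chains back down to bridge points, and Step 2 replaces each maximal HDV-induced subpath with the atom matching its number of variable HDVs, precisely reconstructing $T^*$.

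The main subtlety lies in the backward direction: one must check that the pendant-and-bridge additions used to produce $T$ do not change the atom count or the variable-HDV count of each atom under Algorithm \ref{nimskeletonalg}. This follows because Step 1 only contracts degree-2 pairs (which never involve HDVs) and Step 2 indexes each atom by its number of variable HDVs, which is invariant under the descendant construction since pendants are attached only at vertices already designated as variable HDVs of $T^*$. A secondary point is the well-definedness of the ``maximal induced subpath of HDVs'' in Step 2, which relies on the fact already established in the forward direction that HDVs of a NIM tree lie on a single central path.
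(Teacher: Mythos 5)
Your argument is correct and fills in exactly the reasoning the paper leaves implicit: the paper offers no proof of this proposition, merely asserting that it and its neighbors ``should be clear from Algorithm \ref{nimskeletonalg}.'' Two small tightenings for the backward direction: since every HDV of an atom already satisfies $\delta(v)\geq 3$, no pendants need be added and you may simply take $T=T^*$ itself (a molecule is its own descendant and is NIM), and condition \textbf{\textsc{(i)}} holds not merely because the HDVs are collinear but because every branch at an HDV off the central path is a singleton --- collinearity of HDVs alone would not rule out a long pendant path hanging off an HDV.
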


\begin{proposition} 
Let $T^*$ be a molecule (equivalently, a NIM skeleton). Then $T\in \textnormal{Sk}(T^*)$ if and only if $T\in D(T^*)$.\label{nimskeletoniffdescendant}

    

\end{proposition}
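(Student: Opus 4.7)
The plan is to prove both directions by observing that the descendant construction and Algorithm~\ref{nimskeletonalg} are mutually inverse operations, modulo the verification that every descendant is in fact a NIM tree.

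For the backward implication, I would start with $T\in D(T^*)$, so $T$ is obtained from $T^*$ by substituting paths (bridges) at bridge points and additional pendants at variable HDVs. I would first verify that $T$ is a NIM tree using the characterization from Theorem~\ref{mastertheorem} with $k=1$. Condition \textbf{\textsc{(i)}} holds because every HDV of $T$ lies on the central path inherited from the molecule structure of $T^*$, so the only non-singleton branches at any HDV of $T$ run along that central path, of which there are at most two; neither bridge-insertion nor pendant-insertion creates new non-singleton branches off the central path. Condition \textbf{\textsc{(ii)}} is immediate because each atom already supplies every variable HDV with enough non-HDV neighbors to guarantee $\delta(v)\geq 2$, and pendant-insertion only increases $\delta$. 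Having confirmed that $T$ is NIM, I would then trace Algorithm~\ref{nimskeletonalg} on $T$: step~1 contracts each inserted bridge back down to its bridge point, and step~2 identifies the resulting HDV-induced paths with the atoms of $T^*$, so the algorithm outputs exactly $T^*$; hence $T\in\textnormal{Sk}(T^*)$.

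For the forward implication, I would suppose $T\in \textnormal{Sk}(T^*)$ and read off the substitution data directly from the execution of Algorithm~\ref{nimskeletonalg} on $T$. At each bridge point of $T^*$, step~1 of the algorithm contracted some sequence of $m\geq 1$ degree-$2$ central-path vertices of $T$ down to that single vertex; reversing this contraction corresponds to the substitution $\mathcal{J}\mapsto \mathcal{Z}^m$ at that bridge point. At each variable HDV $u$ of $T^*$, step~2 retained $u$ together with the default pendant count encoded by its atom; any additional pendants present in $T$ at the corresponding vertex are recovered by a substitution $\mathcal{U}\mapsto \mathcal{Z}^p$ at $u$ for the appropriate $p\geq 0$. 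Applying all these substitutions to $T^*$ reconstructs $T$, so $T\in D(T^*)$.

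The main obstacle I anticipate is the bookkeeping inside step~2 of the algorithm: one must verify that after step~1 the HDV-induced subgraph of the contracted tree decomposes as a disjoint union of induced paths, each matching the combinatorial shape of exactly one atom from Figure~\ref{fig:atoms}, so that the replacement in step~2 is genuinely a well-defined substitution. Establishing this amounts to extracting from conditions \textbf{\textsc{(i)}} and \textbf{\textsc{(ii)}} the finer structural statement that all HDVs of a NIM tree lie along a single central path, with normal (degree-$3$) HDVs occupying only the prescribed positions adjacent to variable HDVs. Once this atom/HDV dictionary is in place, both directions of the equivalence are mechanical.
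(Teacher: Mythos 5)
Your proposal is correct and is essentially the argument the paper intends: the paper states this proposition without proof, declaring it ``clear from Algorithm~\ref{nimskeletonalg},'' and the one nontrivial ingredient you supply --- verifying via the $k=1$ case of Theorem~\ref{mastertheorem} that every descendant is actually NIM --- is exactly the content the paper defers to its proof of the following proposition ($D_n(T^*)\subset\mathcal{N}_n$). The only small imprecision is that condition \textbf{\textsc{(ii)}} must be checked for \emph{all} HDVs of the descendant, including the normal degree-$3$ ones supplied by the atoms, not only the variable HDVs; this holds because the atoms are built with $\delta(v)\geq 2$ at every HDV and neither pendant- nor bridge-insertion decreases $\delta$, so your argument goes through unchanged.
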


Denote by $\mathcal{N}_n$ the combinatorial class of NIM trees on $n$ vertices.
\begin{proposition} 
As combinatorial classes, 
$\mathcal{N}_n\cong\bigcup_{\substack{\textnormal{$T^*$ is a}\\ \textnormal{molecule}}}D_n(T^*)$.
\end{proposition}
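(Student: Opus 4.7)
The plan is to obtain this as an essentially immediate corollary of Propositions \ref{nimhasuniquenimskeleton}, \ref{singlenimskeletoniffdescendant}, and \ref{nimskeletoniffdescendant}; the statement is a bookkeeping reformulation of these three, so the work consists of verifying coverage and disjointness of the indicated union and noting that the correspondence is size-preserving.

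First I would verify that the union on the right contains every element of $\mathcal{N}_n$. Given $T \in \mathcal{N}_n$, Algorithm \ref{nimskeletonalg} applied to $T$ produces (uniquely, by Proposition \ref{nimhasuniquenimskeleton}) a NIM skeleton $T^*$, and Proposition \ref{singlenimskeletoniffdescendant} guarantees that $T^*$ is a molecule, hence a valid index for the union. Proposition \ref{nimskeletoniffdescendant} then places $T \in D_n(T^*)$ since $T \in \mathrm{Sk}(T^*)$ by construction, and $T^*$ and $T$ have the same variable-HDV/bridge-point structure padded to $n$ vertices. Conversely, given any molecule $T^*$ and any $T \in D_n(T^*)$, Proposition \ref{nimskeletoniffdescendant} identifies $T$ with an element of $\mathrm{Sk}(T^*)$, which is by definition a collection of NIM trees, and restricting to $n$-vertex descendants ensures $T \in \mathcal{N}_n$.

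Next, I would verify that the union is disjoint: if some $T$ appeared in both $D_n(T_1^*)$ and $D_n(T_2^*)$ for molecules $T_1^* \neq T_2^*$, then Proposition \ref{nimskeletoniffdescendant} would force both $T_1^*$ and $T_2^*$ to be NIM skeletons of $T$, contradicting the uniqueness guaranteed by Proposition \ref{nimhasuniquenimskeleton}. Combined with the coverage step, this yields the claimed isomorphism of combinatorial classes, with the identity map $T \mapsto T$ furnishing the size-preserving bijection (size is preserved on the nose since each $D_n(T^*)$ is restricted to $n$-vertex descendants). Since every step simply invokes a previously established proposition, I do not anticipate any genuine obstacle; the only care required is ensuring that the symbol $\cong$ is being read as a bijection of finite sets of the same cardinality, which follows from the uniqueness of the skeleton map and its inverse given by descendant-class membership.
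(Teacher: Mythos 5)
Your coverage direction ($\mathcal{N}_n\subseteq\bigcup D_n(T^*)$) matches the paper's argument exactly: unique skeleton via Proposition \ref{nimhasuniquenimskeleton}, skeleton is a molecule via Proposition \ref{singlenimskeletoniffdescendant}, membership in the descendant class via Proposition \ref{nimskeletoniffdescendant}. The problem is the reverse inclusion $D_n(T^*)\subseteq\mathcal{N}_n$. You dispose of it by citing Proposition \ref{nimskeletoniffdescendant} together with the definition of $\mathrm{Sk}(T^*)$ as ``the set of all NIM trees whose skeleton is $T^*$.'' Read literally, that makes the claim a tautology --- but it also makes Proposition \ref{nimskeletoniffdescendant} silently carry the entire assertion that every descendant of a molecule is a NIM tree, which is precisely the nontrivial content here. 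That proposition is offered as ``clear from Algorithm \ref{nimskeletonalg},'' and what is clear from the algorithm is only that the skeleton map inverts the descendant construction; nothing in the algorithm explains why adding pendants and bridges to a molecule preserves the NIM property. Your argument is therefore circular in the only direction that requires work.

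The paper closes this gap by checking the two conditions of Theorem \ref{mastertheorem} directly on an arbitrary descendant: adding pendants at variable HDVs keeps $\delta(v)\geq 3$ and creates no new non-singleton components of $T-v$; adding bridges at bridge points only enlarges an existing non-singleton component, except at a peripheral HDV, where one verifies that $T^*-v$ had only one non-singleton component to begin with, so the bridge raises the count to at most two, which condition \textbf{\textsc{(i)}} still permits. You need some version of this verification; without it, the inclusion $D_n(T^*)\subseteq\mathcal{N}_n$ is asserted, not proved. (Your disjointness paragraph is fine --- the paper states it as a separate proposition immediately afterward --- but it is not the issue.)
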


\begin{proof} We first show $D_n(T^*)\subset \mathcal{N}_n$. That is, given a molecule/NIM skeleton $T^*$, each of its descendants is NIM: Let $T^*$ be a NIM skeleton. Let $v$ be an HDV of $T^*$. We have $\delta(v)\geq 3$ for all HDVs since this is true in the atoms and is preserved under vertex-concatenation of atoms at their end-vertices by adding pendants to variable HDVs $v$, and by adding bridges to $T^*$. $T^*$ is NIM, so $T^*-v$ has at most two connected components for each HDV $v$ of $T$ by Theorem \ref{nimcharacterization}. Adding pendants preserves this fact since pendants are not their own connected components. Adding bridges preserves this as well since this only adds vertices to one or the other component that already has more than one vertex unless $v$ is a peripheral HDV of $T^*$. If $v$ is a peripheral HDV in $T^*$, then since $v$ is on the central path and peripheral, $T^*-v$ must have only one  component with more than one vertex, so adding a bridge at any of the two end junctions would only make it such that $T^*-v$ has two components with $\geq 1$ vertex, which Theorem \ref{nimcharacterization} permits. Hence $D_n(T^*)\subset\mathcal{N}_n$.

We now show that $\mathcal{N}_n\subset D_n(T^*)$. That is, given a NIM tree $T$ on $n$ vertices, it is the descendant of a unique molecule $T^*$: Let $T$ be a NIM tree. $T$ has a unique NIM skeleton $T^*$ by Proposition \ref{nimhasuniquenimskeleton}, so by Propositions \ref{singlenimskeletoniffdescendant} and \ref{nimskeletoniffdescendant} there is a unique molecule with descendant $T$. Thus $\mathcal{N}_n\subset D_n(T^*)$.
\end{proof} 

The following proposition should be clear, and we omit its proof.

\begin{proposition} If $T_1^*\neq T_2^*$, then $D_n(T_1^*)\cap D_n(T_2^*)=\varnothing$. In other words, any descendant class on $n$ vertices is disjoint from any other descendant class on $n$ vertices.
\end{proposition}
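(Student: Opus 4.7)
The plan is to reduce the claim directly to the uniqueness of the NIM skeleton established in Proposition \ref{nimhasuniquenimskeleton}, using Proposition \ref{nimskeletoniffdescendant} to pass between descendant classes and skeleton classes. The skeleton classes are precisely the fibers of the map $T \mapsto T^{*}$ (take the NIM skeleton) on the set of NIM trees, and fibers of a function are pairwise disjoint by definition, which is exactly what is being asserted.

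Concretely, I would argue by contradiction. Suppose some NIM tree $T$ on $n$ vertices lies in $D_n(T_1^{*}) \cap D_n(T_2^{*})$. Applying Proposition \ref{nimskeletoniffdescendant} to each molecule $T_i^{*}$ gives $T \in \textnormal{Sk}(T_1^{*})$ and $T \in \textnormal{Sk}(T_2^{*})$, which by the definition of the skeleton class means that both $T_1^{*}$ and $T_2^{*}$ arise as outputs of Algorithm \ref{nimskeletonalg} when applied to $T$. Proposition \ref{nimhasuniquenimskeleton} then forces $T_1^{*} = T_2^{*}$, contradicting the hypothesis. The contrapositive is the claim.

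I do not expect any real obstacle: the substance is already present in the earlier two propositions. The one piece of bookkeeping, if the reader wants it, is to note that Algorithm \ref{nimskeletonalg} is order-independent, so the NIM skeleton is a well-defined function on isomorphism classes of NIM trees rather than a relation. Step (1) replaces each maximal path of adjacent degree-$2$ vertices by a single edge regardless of the sequence in which contractions are performed, and step (2) is a local substitution depending only on the number of variable HDVs along each induced HDV-path. Hence $T \mapsto T^{*}$ is a genuine function, its fibers $\textnormal{Sk}(T^{*})$ are pairwise disjoint, and by Proposition \ref{nimskeletoniffdescendant} the descendant classes $D_n(T^{*})$ inherit the same disjointness.
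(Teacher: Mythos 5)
Your argument is correct and is exactly the reasoning the paper leaves implicit when it states ``the following proposition should be clear, and we omit its proof'': the NIM skeleton is a well-defined function on NIM trees by Proposition~\ref{nimhasuniquenimskeleton}, its fibers are the skeleton classes, and Proposition~\ref{nimskeletoniffdescendant} identifies these with the descendant classes, so disjointness is immediate. The extra remark on the order-independence of Algorithm~\ref{nimskeletonalg} is a sensible piece of bookkeeping that the paper takes for granted.
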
 



It then follows that the descendant classes partition the NIM trees. Thus, to count the NIM trees, it suffices to count the descendants of all possible molecules. 

\subsubsection{OGF for \texorpdfstring{$\mathcal{A}$}{A}, the NIM atoms}
Marking the number of vertices by $\mathcal{Z}$, the number of variable HDVs by $\mathcal{U}$, and the number of components by $\mathcal{J}$, we have the following combinatorial specification of $\mathcal{A}$:
\begin{equation*}
    \textstyle\mathcal{A}
    \cong 
    (\mathcal{Z}^4\times\mathcal{J})
    +
    (\mathcal{Z}^5\times \mathcal{U}\times \mathcal{J})+\bigcup_{k\geq 2}\left(\mathcal{Z}^{4k}\times \mathcal{U}^{k}\times \mathcal{J}\right).
\end{equation*} 
Let $A(z,u,r)$ be the OGF for $\mathcal{A}$. By the symbolic methods for OGFs \cite{flajolet_sedgewick_2013}, this specification translates  as $A(z,u,r)=z^4r+z^5ur+\sum_{k\geq 2}z^{4k}u^kr
=r z^4 \left(1 + u z + \frac{u^2 z^4}{1 - u z^4}\right)$. 
\subsubsection{OGF for \texorpdfstring{$\mathcal{N}^*$}{mathcal N star}, the molecules (NIM skeletons)}

We now find the OGF for the combinatorial class of molecules, $\mathcal{N}^*$ (equivalently, NIM skeletons by Proposition \ref{singlenimskeletoniffdescendant}). Of course, the atoms themselves are molecules. Moreover, any non-atomic molecule is a sequence of vertex-concatenated members of $\mathcal{A}$ sequence, under symmetric equivalence about the central path. Vertex-concatenation of two atoms from $\mathcal{A}$ results in the resulting vertex count being one less than the sum of that of each atom. Therefore, removal of one $\mathcal{Z}$ is required for each concatenation of an atom to a molecule under construction, which translates to the removal of $\mathcal{Z}$ for every atom in $\mathcal{A}$ that is used to construct a molecule of size $\geq 2$. 

We now take advantage of the fact that the number of bridge points between vertex-concatenated atoms in a molecule is exactly one more than the number of its component atoms. Since these bridge points are where we can add paths to the molecule to build descendants, \emph{we will now let $\mathcal{J}$ mark the bridge points between the atoms of the molecule}. Thus, in the remainder of the enumeration, we will adjoin $\mathcal{J}$ to each term when necessary to account for the single junction left out by counting only the total number of atomic components in the molecule.

For brevity, we adopt multi-index notation to write $\*x=(z,u,r)$ and $\*x^k=(z^k,u^k,r^k)$ for each $k\geq 1$. By the above comments, we have the combinatorial specification that $\mathcal{N}^*
    \cong
    \mathcal{J}\times
    \left(\mathcal{Z}\times\USeq_{\geq 1}(\mathcal{A}/\mathcal{Z})
    \right)$, 
which, by the symbolic method and Definition \ref{USEQ}, translates to the following OGF for $\mathcal{N}^*$, denoted $N^*(\*x)$:
\begin{equation*} \label{nstar}
     \textstyle N^*(\*x)= zr\left(\frac{1}{2}\frac{1}{1-A(\*x)/z}+\frac{1}{2} \frac{1+A(\*x)/z}{1-A\left(\*x^2\right)/z^2}-1\right).
\end{equation*}

\subsubsection{OGF for \texorpdfstring{$\mathcal{N}^*_s$}, the \emph{symmetric} molecules}
Denote $\mathcal{N}_{s}$ (resp. $\mathcal{N}_{a}$) as the set of NIM trees that are symmetric (resp. asymmetric) about the center of its central path. Let $\mathcal{N}^*_{s}$ (resp. $\mathcal{N}^*_{a}$) denote the combinatorial class of molecules that are symmetric (resp. asymmetric) about the center of its central path. We now provide a recursive specification for the symmetric molecules $\mathcal{N}^*_{s}$. 

Since $\mathcal{N}^*$ only counts asymmetric molecules under reflection about the central path as equivalent, we need to add an extra copy of the asymmetric molecules when their orientations with respect to the center are reversed. We take this into account below with the third term. 

Before we proceed, we remark on notation. Let $\mathcal{C}$ be a combinatorial class marked by three variables $\mathcal{Z}$, $\mathcal{U}$, and $\mathcal{J}$. For brevity, let $\left(\mathcal{C}\right)^2$ denote $\mathcal{C}\left[\mathcal{Z}\mapsto\mathcal{Z}^2,\mathcal{U}\mapsto\mathcal{U}^2,\mathcal{J}\mapsto\mathcal{J}^2\right]$. This is the combinatorial class formed from $\mathcal{C}$ by taking pairs of elements from $\mathcal{C}$, each pair containing the same element twice.\footnote{Note that this is different from $\mathcal{C}\times\mathcal{C}=\mathcal{C}^2$, which simply is the class of ordered pairs of \emph{any} two elements from $\mathcal{C}$, not ordered pairs of the same element} We then have the following specification:
\begin{equation*}
        \mathcal{N}^*_{s}
        \cong 
        \underbrace{\mathcal{J}\times\mathcal{A}}_{\text{atoms}}
        +     \underbrace{\left((\mathcal{N}^*)^2+(\mathcal{N}^*_{a})^2\right)/(\mathcal{Z}\times\mathcal{J})}_{\substack{\text{symmetric molecules composed of} \\ \text{two copies of a smaller molecule}}}
        +
        \underbrace{\mathcal{A}\times ((\mathcal{N}^*)^2+(\mathcal{N}^*_{a})^2)/(\mathcal{Z}^2\times\mathcal{J})}_{\substack{\text{symmetric molecules composed of} \\ \text{two copies of a smaller molecule} \\ \text{at opposite ends of a central atom}}}.
\end{equation*} 
Applying the simple observation that $\mathcal{N}^*_{a}=\mathcal{N}^*-\mathcal{N}^*_{s}$, the above simplifies as
\begin{equation*}
        \mathcal{N}^*_{s}
        \cong 
        \mathcal{J}\times\mathcal{A}
        +
        \left(2(\mathcal{N}^*)^2-(\mathcal{N}^*_{s})^2\right)/(\mathcal{Z}\times\mathcal{J})
        +
        \mathcal{A}\times \left(2(\mathcal{N}^*)^2-(\mathcal{N}^*_{s})^2\right)/(\mathcal{Z}^2\times\mathcal{J}).
\end{equation*} 
By symbolic methods this translates to the multivariate functional equation given by
\begin{equation}\label{ns}
    \textstyle N^*_{s}(\mathbf{x})=r A(\mathbf{x})+\frac{2N^*(\mathbf{x}^2)}{zr}\left(1+A(\mathbf{x})/z\right)
    -
    \frac{N^*_{s}(\mathbf{x}^2)}{zr}\left(1+A(\mathbf{x})/z\right),
\end{equation} 
from which we can extract coefficients. 

\subsubsection{OGF for \texorpdfstring{$\mathcal{N}_{s}$}{Ns}, the symmetric NIM \emph{trees}}

Any symmetric NIM tree $T$ can be constructed from two copies of some smaller NIM tree. There are four cases we must consider which correspond to the four possible combinations of parity in the variable HDVs $\mathcal{U}$ and the bridges $\mathcal{J}$. One of these four cases will next be shown to be impossible:

\begin{proposition}\label{nooddodd} No NIM tree $T$ has an odd number of variable HDVs and an odd number of bridges.
\begin{proof} Suppose for a contradiction any NIM skeleton is a member of $\mathcal{N}_{\text{sym,o,o}}$. There is an odd number of variable HDVs, so since the skeleton must be symmetric about the central path there must be a central variable HDV, a variable HDV that sits in the exact center of the central path. On the other hand, there is an odd number of junction points between atoms (including endpoints), so it follows that there must be a junction point between two atoms in the exact center of the central path. But a variable HDV has $\delta(v)\geq 3$ and a junction point/endpoint has $\delta(v)\leq 2$, a contradiction. \end{proof}
\end{proposition}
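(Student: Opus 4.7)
The plan is to argue by contradiction, essentially unpacking the notation $\mathcal{N}_{\text{sym,o,o}}$ that the paper is about to exclude. I would suppose there is a symmetric molecule (equivalently, a symmetric NIM skeleton) with an odd number of variable HDVs and an odd number of bridge points, and then pin down a single central vertex of the central path that is forced to play two mutually exclusive roles.

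First I would use the fact that symmetry about the center of the central path induces an involution on that path which permutes both the set of variable HDVs and the set of bridge points setwise. Any involution whose domain contains an odd-sized invariant subset must fix at least one element of that subset, and the only point on the central path fixed by a reflection about its center is the center itself. Applied to variable HDVs this forces a central variable HDV $v$; applied to bridge points this forces a central bridge point $w$. Since both must lie at the unique fixed point of the reflection, we conclude $v = w$.

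Next I would read off a numerical contradiction from the definitions: a variable HDV satisfies $\delta(v) \geq 3$, a property inherited from every atom in which such a marked vertex appears, whereas a bridge point has $\delta(w) = 0$ by the very definition of a bridge of length $0$. The same vertex cannot satisfy both, so no symmetric NIM skeleton with both counts odd exists.

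The step I expect to be the main obstacle is making the fixed-point argument airtight when the reflection could a priori be edge-centered rather than vertex-centered. I would handle this by observing that an edge-centered reflection of a path pairs up all of its vertices (and, in particular, all variable HDVs and all bridge points on it), so any odd-sized invariant subset immediately forces the reflection to be vertex-centered. Once that case is ruled out, the coincidence $v = w$ and the $\delta$-value clash finish the argument.
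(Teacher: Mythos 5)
Your proof is correct and takes essentially the same route as the paper's: both locate a forced central variable HDV and a forced central bridge point at the unique fixed point of the reflection, and derive a contradiction from the incompatible values of $\delta$ at that vertex. Your explicit involution/fixed-point argument (including ruling out an edge-centered reflection) is just a more careful rendering of the step the paper leaves implicit.
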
  

Let $\mathcal{N}_{s,e,o}$ be the combinatorial class of symmetric NIM trees with an even number of variable HDVs and an odd number of bridges, and similarly define $\mathcal{N}_{s,e,e}$, $\mathcal{N}_{s,o,e}$, and $\mathcal{N}_{s,o,o}$ in the obvious way. $\mathcal{N}_{s,o,o}=\varnothing$ by Proposition \ref{nooddodd}, so the remaining three classes partition the symmetric NIM trees $\mathcal{N}_{s}$.
Hence $\mathcal{N}_s\cong\mathcal{N}_{s,e,e}+\mathcal{N}_{s,e,o}+\mathcal{N}_{s,o,e}$, and so
\begin{equation}\label{nsymsum}
    N_s(\*x)=N_{s,e,e}(\*x)+N_{s,e,o}(\*x)+N_{s,o,e}(\*x).
\end{equation}

\begin{proposition}\label{evenskelevennim} 
If $T\in\mathcal{N}_{s}$, then $T^*\in\mathcal{N}^*_{s}$.
\begin{proof} 
Suppose instead $T$ is symmetric but $T^*$ is asymmetric. $T$ is a result of adding pendants to $T$ at vertices marked $\mathcal{U}$ and adding paths at junctions marked $\mathcal{J}$, so by Proposition \ref{nooddodd} the number of $\mathcal{U}$s is even or the number of $\mathcal{J}$s is even. 

Suppose there is an even number of $\mathcal{U}$s. Then whatever pendants are added to one half of the central path must also be added to the corresponding HDV mirrored across the center of the central path. But there must be more HDVs on one side of the path than on the other, meaning this task is impossible. The same problem occurs in the case of an even number of $\mathcal{J}$s. Therefore, we can build every symmetric NIM tree from $\mathcal{N}^*_{s}$ as our foundation.
\end{proof}
\end{proposition}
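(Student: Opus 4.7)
The plan is to prove the statement directly: any reflection automorphism $\sigma$ of $T$ about the center of its central path induces a reflection automorphism $\sigma^*$ of $T^*$ about the center of the central path of $T^*$. The main engine is the functoriality of Algorithm \ref{nimskeletonalg}: both of its steps (edge-contracting adjacent pairs of degree-$2$ vertices, and replacing each maximal induced subpath of HDVs with the unique atom of $\mathcal{A}$ having the same number of variable HDVs) are defined purely in terms of degrees and adjacencies, which are invariant under graph automorphisms. Consequently, any automorphism of $T$ descends canonically to an automorphism of the intermediate graph after Step 1 and then to an automorphism of $T^*$.

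First I would check that if $\sigma$ reverses the central path of $T$, then $\sigma$ setwise preserves the HDVs, variable HDVs, bridge points, and pendants, since each of these classes is defined via the degree function and, for variable HDVs, the threshold $\deg \geq 4$. This already implies $\sigma$ commutes with Step 1 of Algorithm \ref{nimskeletonalg}: the contraction rule is formulated only in terms of the degree-$2$ vertices along the central path, so the image of a pair to be contracted is again a pair to be contracted, and $\sigma$ descends to a well-defined automorphism of the contracted graph which still reverses (the image of) the central path.

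Next I would verify that $\sigma$ commutes with Step 2. The crucial observation here is that each individual atom in $\mathcal{A}$ is itself symmetric about its own central axis (this is immediate from Figure \ref{fig:atoms}; each atom is characterized by a number of consecutive variable HDVs together with its fixed pendant configuration, and both halves match). Therefore, when the induced automorphism reverses the order of maximal HDV-induced subpaths along the central path, each such subpath gets mapped to a subpath with the same number of variable HDVs, and the atom replacing it is setwise preserved. Assembling these local equivariances yields an automorphism $\sigma^*$ of $T^*$ which reverses the central path of $T^*$, so $T^* \in \mathcal{N}^*_s$ as required.

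The main obstacle is the equivariance of Step 2—namely, that the identification of an HDV-induced subpath of $T$ with its replacement atom is natural under automorphisms. This in turn reduces to the two simple facts noted above: the atoms are indexed by a single isomorphism invariant (the number of variable HDVs on the central path) and each atom possesses its own central reflection. Once these points are granted, the proposition follows from unwinding the definitions, and no appeal to Proposition \ref{nooddodd} or to the parity classes $\mathcal{N}_{s,e,e}$, $\mathcal{N}_{s,e,o}$, $\mathcal{N}_{s,o,e}$ is needed.
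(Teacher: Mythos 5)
Your proof is correct, but it takes a genuinely different route from the paper's. The paper argues by contradiction: it assumes $T$ symmetric with $T^*$ asymmetric, invokes Proposition~\ref{nooddodd} to conclude the number of variable HDVs or the number of bridges is even, and then argues that mirror-matching pendant or bridge additions along the central path is impossible when $T^*$ has more HDVs on one side than the other. Your argument instead proceeds directly and structurally: a reflection automorphism $\sigma$ of $T$ witnessing $T\in\mathcal{N}_s$ descends through both steps of Algorithm~\ref{nimskeletonalg} because each step is defined purely in terms of degree and adjacency data (which $\sigma$ preserves), so $\sigma$ induces a reflection automorphism $\sigma^*$ of $T^*$, giving $T^*\in\mathcal{N}^*_s$ immediately. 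This buys you a cleaner and more robust argument: it requires no case split on parities, no appeal to Proposition~\ref{nooddodd}, and no informal count of ``more HDVs on one side.'' The one point your proof does need (and the paper's does not make explicit in this proposition, though it records it in the definition of a molecule) is that each atom in $\mathcal{A}$ is symmetric about its own center, which you correctly identify as the lynchpin of the Step~2 equivariance. You also rely implicitly on the uniqueness of the NIM skeleton (Proposition~\ref{nimhasuniquenimskeleton}) so that $T\mapsto T^*$ is a well-defined canonical construction through which automorphisms can descend; that is already in the paper and poses no gap.
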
 

\begin{proposition}\label{oeinvariant} For any molecule $T^*$, any $T\in D(T^*)$ has the same number of variable HDVs and the same number of bridge/bridge points as $T^*$.
\begin{proof}
Fix a molecule $T^*$. We have by Proposition \ref{nimskeletoniffdescendant} that $T\in \text{Sk}(T^*)$, so applying algorithm \ref{nimskeletonalg} to $T$ gives $T^*$. Step 1 of the algorithm edge contracts bridges to a bridge point (which is a bridge), which leaves the total number of bridges unchanged. Step 2 swaps each path/singleton component of the graph induced by the variable HDVs in the molecule with $k$ HDVs to the unique atom in $\mathcal{A}$ with $k$ HDVs, thus leaving the total number of variable HDVs unchanged. \end{proof}
\end{proposition}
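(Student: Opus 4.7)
The plan is to verify the invariance directly from the definition of a descendant. Recall that $T \in D(T^*)$ means $T$ is obtained from $T^*$ by a (possibly empty) sequence of two kinds of operations: (a) attaching a pendant vertex to a variable HDV of the current graph, and (b) extending a bridge point into a longer bridge by inserting interior degree-$2$ vertices inline on the central path at that location. The strategy is to induct on the number of such operations, showing that each preserves both the count of variable HDVs and the count of bridges/bridge points; the base case ($T = T^*$ after zero operations) is trivial.

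For operation (a), attaching a pendant vertex at a variable HDV $v$ increases $\deg_T(v)$ by $1$, so $v$ remains a variable HDV (its degree was already $\geq 4$). The new vertex has degree $1$ and is therefore not an HDV, while no other vertex's degree changes. Hence the set of variable HDVs is unchanged. Moreover the new pendant lies off the central path, so the central path, and with it the collection of bridges along it, is unchanged. For operation (b), inserting interior degree-$2$ vertices into an existing bridge increases that bridge's length but does not create a new bridge (the bridge is bounded on both sides by HDVs or by end-vertices of the central path, and the inserted vertices are strictly interior, so no new maximal degree-$\leq 2$ segment is formed). Variable HDVs are also untouched, since the operation adds only degree-$2$ vertices on the central path.

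Combining the base case with these two preservation facts, both counts are invariants of the entire descendant construction, so any $T \in D(T^*)$ has the same number of variable HDVs and the same number of bridges/bridge points as $T^*$. An equivalent (and slightly cleaner) formulation I would fall back on if needed is to invoke Proposition \ref{nimhasuniquenimskeleton} and verify instead that Algorithm \ref{nimskeletonalg} preserves both counts: Step~1 edge-contracts only within each maximal bridge, collapsing it to a single bridge point without changing the number of bridges or touching any HDV; Step~2 swaps each induced-path component of the HDV-subgraph with the unique atom on the same number of variable HDVs, preserving that count componentwise. The main thing to be careful about is the bookkeeping in operation (b): a length-$0$ bridge point and a length-$\ell$ bridge each count as exactly one bridge, so extending a bridge point changes its length but not the total bridge count.
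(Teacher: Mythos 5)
Your primary argument is correct and takes the opposite direction from the paper's: you work \emph{forward} from the molecule, inducting on the build operations (pendant attachment at a variable HDV, bridge extension at a bridge/bridge point) and verifying that each step is count-preserving, with the base case $T = T^*$ trivial. The paper instead works \emph{backward}, invoking Proposition~\ref{nimskeletoniffdescendant} to conclude that Algorithm~\ref{nimskeletonalg} applied to $T$ returns $T^*$, and then checking that each step of that algorithm (edge-contraction of bridges in Step~1, atom-replacement in Step~2) preserves both counts. The two proofs are mirror images of each other around the bijection between descendants and skeleton classes. Your forward version is a bit more self-contained, reading the invariance directly off the definition of $D(T^*)$ without detouring through the skeleton machinery or the uniqueness statement; the cost is that you implicitly rely on the descendant construction being presented as a finite sequence of atomic operations, which the paper's definition does state but does not formalize as an induction. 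The paper's backward version plugs cleanly into the already-established framework of Propositions~\ref{nimhasuniquenimskeleton}--\ref{nimskeletoniffdescendant}. You in fact identify this equivalence yourself and offer the paper's argument as a fallback, so you have the full picture; either write-up would be acceptable. One small point worth keeping in your version is the bookkeeping remark you already flag: a bridge point counts as one bridge of length $0$, so extending it changes its length but not the bridge count, which is exactly why operation~(b) is count-preserving.
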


Taking terms with even powers of $u$ and even powers of $r$ from $N_s^*(z,u,r)$ by elementary techniques for generating functions, we obtain that
\begin{equation*}
    N^*_{s,e,e}(z,u,r)=
    \frac{1}{2}\left(\frac{1}{2}(N^*_{s}(z,u,r)+N^*_{s}(z,-u,r))+\frac{1}{2}(N^*_{s}(z,u,-r)+N^*_{s}(z,-u,-r))\right).
\end{equation*} 
We can further simplify things by noticing that the result of Proposition \ref{nooddodd} implies that a NIM tree with an even odd of variable HDVs can immediately be deduced to have an even number of bridge points and vice versa. Therefore, $\mathcal{N}_{s,e,o}\cong\mathcal{N}_{s,all,o}$ and $\mathcal{N}_{s,o,e}\cong\mathcal{N}_{s,o,all}$. We then have by elementary manipulation of generating functions that
\begin{align*}
    &N^*_{s,e,o}(z,u,r)=N^*_{s,all,o}(z,u,r)=\textstyle\frac{1}{2}(N^*_{s}(z,u,r)-N^*_{s}(z,u,-r)),
    \\
    &N^*_{s,o,e}(z,u,r)=N^*_{s,o,all}(z,u,r)=\textstyle\frac{1}{2}(N^*_{s}(z,u,r)-N^*_{s}(z,-u,r)).
\end{align*} Propositions \ref{evenskelevennim} and \ref{oeinvariant} imply that enumerating symmetric NIM trees with given parities of variable HDVs and bridges may be done by enumerating symmetric trees of the same parities in the descendant class $D(T^*)$. In other words, \begin{align*}
    \mathcal{N}_{s,e,e}&\cong\bigcup_{T^*\in \mathcal{N}^*_{s,e,e}}\{T:T\in D(T^*)\text{ and $T$ is symmetric}\},
\end{align*} 
with analogous expressions for $\mathcal{N}_{s,e,o}$ and $\mathcal{N}_{s,o,e}$.
We now use the above specifications to find the OGFs for these respective three classes, recalling that their sum will give the OGF for $\mathcal{N}_s$ by Equation \eqref{nsymsum}. 

\begin{proposition} Let the OGF for $\mathcal{N}_{s,e,e}$, $\mathcal{N}_{s,e,o}$, and $\mathcal{N}_{s,o,e}$ be given by $N_{s,e,e}(z)$, $N_{s,e,o}(z)$, and $N_{s,o,e}(z)$, respectively. Then we have the following:
\begin{align}
    N_{s,e,e}(z)&= \textstyle N^*_{s,e,e}(z, \frac{1}{\sqrt{1 - z^2}},\frac{1}{\sqrt{1 - z^2}}),\label{nimsymee}
   \\
    N_{s,e,o}(z)&= \textstyle \frac{\sqrt{1 - z^2}}{1-z} N^*_{s,e,o}(z,\frac{1}{\sqrt{1 - z^2}}, \frac{1}{\sqrt{1 - z^2}}),\label{nimsymeo}
   \\
    N_{s,o,e}(z)&= \textstyle \frac{\sqrt{1 - z^2}}{1-z}N^{*}_{s,o,e}(z, \frac{1}{\sqrt{1 - z^2}},\frac{1}{\sqrt{1 - z^2}}).\label{nimsymoe}
\end{align}
\begin{proof} 
We first prove \eqref{nimsymee}. Fix a molecule $T^*\in\mathcal{N}_{s,e,e}$. Fix some symmetric $T\in D(T^*)$. Then $T^*$ must be symmetric by Proposition \ref{evenskelevennim}. Moreover, $T$ must have the same number of variable HDVs ($\mathcal{U}$) and the same number of bridges ($\mathcal{J}$) as $T^*$ by Proposition \ref{oeinvariant}. Thus, if $T$ has $k$ variable HDVs, $\mathcal{U}^k$, we can entirely determine $T$ using only one half of its central path since the other side must be its reflection about the center of the central path. $D(T^*)$ can therefore be counted by vectors with nonnegative entries in $\mathbb{Z}^{k/2}$, each entry indicating how many pendants must be added to the molecule $T^*$ at that variable HDV to achieve the number found on that of $T$. These are given by the specification $\Seq_{k/2}\left(\mathcal{Z}^2\right)$, the square coming from the fact that one $\mathcal{Z}$ on a variable HDV immediately implies that $\mathcal{Z}$ be on the variable HDV reflected across the central path as well. From this, we obtain the following specification: 
    \begin{equation*}
        \mathcal{N}_{s,e,e}\cong\mathcal{N}^*_{s,e,e}\left[\mathcal{\mathcal{U}}^2\mapsto\Seq\left(\mathcal{Z}^2\right),\mathcal{\mathcal{J}}^2\mapsto\Seq\left(\mathcal{Z}^2\right)\right],
    \end{equation*} 
    which translates to the OGF $\textstyle N_{s,e,e}(z)=N^*_{s,e,e}(z, \frac{1}{\sqrt{1 - z^2}},\frac{1}{\sqrt{1 - z^2}})$ as claimed.
    
    We now prove \eqref{nimsymeo}. We can apply the same reasoning from when specifying $\mathcal{N}_{s,e,e}$ from $\mathcal{N}^*_{s,e,e}$, except that what were two halves of the central path in the $\mathcal{N}_{s,e,e}$ case are now two symmetric components attached by a central bridge. That is, that now there is an odd number of bridges, there must be a central bridge. This central bridge has no counterpart reflected about the center of the central path---it \emph{is} the center of the central path---so a bridge of any length can be added freely without affecting anything else. We therefore have the specification
    \begin{equation*} \mathcal{N}_{s,e,o}\cong\Seq(\mathcal{Z})\times\mathcal{N}^*_{s,e,o}/\mathcal{J}\left[\mathcal{\mathcal{U}}^2\mapsto\Seq\left(\mathcal{Z}^2\right),\mathcal{\mathcal{J}}^2\mapsto\Seq\left(\mathcal{Z}^2\right)\right],
    \end{equation*} where removal of a $\mathcal{J}$ corrects for the central bridge already accounted for by the leading $\Seq(\mathcal{Z})$. This translates to the OGF $\textstyle N_{s,e,o}(z)=
    \frac{\sqrt{1 - z^2}}{1-z}
    N^{*}_{s,e,o}(z, \frac{1}{\sqrt{1 - z^2}},\frac{1}{\sqrt{1 - z^2}})$ as claimed.
    
    We conclude by proving \eqref{nimsymoe}. We can apply the same reasoning from when specifying $\mathcal{N}_{s,e,e}$ from $\mathcal{N}^*_{s,e,e}$, except that now there is an odd number of variable HDVs, and thus there must be a central variable HDV. This central variable HDV has no counterpart reflected about the center of the central path---it \emph{is} the center of the central path---so its pendants can be added freely without affecting anything else. We therefore have the specification $$\mathcal{N}_{s,o,e}\cong\Seq(\mathcal{Z})\times\mathcal{N}^*_{s,o,e}/\mathcal{U}\left[\mathcal{\mathcal{U}}^2\mapsto\Seq\left(\mathcal{Z}^2\right),\mathcal{\mathcal{J}}^2\mapsto\Seq\left(\mathcal{Z}^2\right)\right],$$
      where removal of a $\mathcal{U}$ corrects for the extra central HDV already accounted for by the leading $\Seq(\mathcal{Z})$. This translates to the OGF $\textstyle N_{s,o,e}(z)=
        \frac{\sqrt{1 - z^2}}{1-z}
         N^{*}_{s,o,e}(z, \frac{1}{\sqrt{1 - z^2}},\frac{1}{\sqrt{1 - z^2}})$ as claimed.
\end{proof}
\end{proposition}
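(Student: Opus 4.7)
The plan is to prove each of the three identities by constructing a combinatorial specification that builds the symmetric NIM trees in each parity class from the symmetric molecules of the same parity class, then translating each specification to an OGF via the symbolic method. The combinatorial content is essentially that Propositions \ref{evenskelevennim} and \ref{oeinvariant} tell us that symmetric NIM trees lift uniquely to symmetric molecules in the same parity class, and symmetry forces every decoration added at a non-central HDV or bridge point to be mirrored by an identical decoration at its reflection.

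For \eqref{nimsymee}, the plan is the following. Fix $T^*\in\mathcal{N}^*_{s,e,e}$. Since $T^*$ has an even number $2k$ of variable HDVs and an even number $2\ell$ of bridge points, every variable HDV and every bridge point has a mirror partner across the central path, so they come in $k+\ell$ mirror pairs. A symmetric descendant $T\in D(T^*)$ is determined by independently choosing, for each pair, a nonnegative integer specifying how many pendants (or bridge-extension vertices) to add to each side of the pair; this is exactly the class $\Seq(\mathcal{Z}^2)$ (each sequence entry contributes one $\mathcal{Z}$ to each side). This gives the substitution
\begin{equation*}
    \mathcal{N}_{s,e,e}\cong\mathcal{N}^*_{s,e,e}\left[\mathcal{U}^2\mapsto\Seq(\mathcal{Z}^2),\ \mathcal{J}^2\mapsto\Seq(\mathcal{Z}^2)\right],
\end{equation*}
and applying the OGF composition rule with $\Seq(\mathcal{Z}^2)$ having OGF $\tfrac{1}{1-z^2}$ yields $u=r=\tfrac{1}{\sqrt{1-z^2}}$, producing \eqref{nimsymee}.

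For \eqref{nimsymeo}, the key new feature is that having an odd number of bridges in a symmetric tree forces exactly one bridge to sit at the exact center of the central path, fixed by the reflection. All remaining bridges and all variable HDVs pair up and contribute $\Seq(\mathcal{Z}^2)$ as in the e,e case. The central bridge, however, has no mirror partner, so it can be freely extended by any number of vertices, contributing a factor of $\Seq(\mathcal{Z})$. To avoid over-counting, the central bridge point in $T^*$ must be removed before applying the paired substitution (otherwise it would be treated as half of a mirror pair); this is the role of the $/\mathcal{J}$. The resulting specification translates to $\tfrac{1}{1-z}\cdot\sqrt{1-z^2}\cdot N^*_{s,e,o}(z,\tfrac{1}{\sqrt{1-z^2}},\tfrac{1}{\sqrt{1-z^2}})=\tfrac{\sqrt{1-z^2}}{1-z}N^*_{s,e,o}(z,\tfrac{1}{\sqrt{1-z^2}},\tfrac{1}{\sqrt{1-z^2}})$. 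The proof of \eqref{nimsymoe} is then entirely analogous, swapping the roles of bridges and variable HDVs: the odd count of variable HDVs forces one central variable HDV whose pendants need not be mirrored, contributing $\Seq(\mathcal{Z})$, with a $/\mathcal{U}$ correction to prevent double-counting.

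The main obstacle is the bookkeeping around the central bridge (in the e,o case) and the central variable HDV (in the o,e case): one has to verify carefully that the division $/\mathcal{J}$ or $/\mathcal{U}$ in the specification is well-defined (every molecule in the relevant class indeed contains such a central marker, by Proposition \ref{nooddodd} combined with the parity definitions) and that the paired substitution $\mathcal{U}^2,\mathcal{J}^2\mapsto\Seq(\mathcal{Z}^2)$ only acts on the remaining, truly paired markers. Once this is in place, the translation of the specifications into OGFs is a routine application of the symbolic method and the rule $u^2\mapsto\tfrac{1}{1-z^2}\Rightarrow u\mapsto\tfrac{1}{\sqrt{1-z^2}}$, and each of the three identities drops out directly.
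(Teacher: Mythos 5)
Your proposal is correct and follows essentially the same route as the paper's proof: the same substitutions $\mathcal{U}^2,\mathcal{J}^2\mapsto\Seq(\mathcal{Z}^2)$ for mirror-paired markers, the same $\Seq(\mathcal{Z})$ factor with a $/\mathcal{J}$ (resp.\ $/\mathcal{U}$) correction for the unpaired central bridge (resp.\ central variable HDV), and the same translation $u=r=\tfrac{1}{\sqrt{1-z^2}}$. Your added remark on verifying that the division by $\mathcal{J}$ or $\mathcal{U}$ is well-defined via Proposition \ref{nooddodd} is a small but welcome refinement of the paper's argument.
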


We obtain that the OGF for the symmetric NIM trees, $N_s(z)$, is given by 
\begin{multline}\label{nsym}
  \textstyle N_{s}(z)
   =
   N^*_{s,e,e}(z, \frac{1}{\sqrt{1 - z^2}},\frac{1}{\sqrt{1 - z^2}})
   + 
   \frac{\sqrt{1 - z^2}}{1-z}
   N^*_{s,e,o}(z,\frac{1}{\sqrt{1 - z^2}}, \frac{1}{\sqrt{1 - z^2}})
   +\textstyle
   \frac{\sqrt{1 - z^2}}{1-z}
     N^{*}_{s,o,e}(z, \frac{1}{\sqrt{1 - z^2}},\frac{1}{\sqrt{1 - z^2}}).
\end{multline}

\subsubsection{OGF for \emph{asymmetric} NIM trees, \texorpdfstring{$\mathcal{N}_a$}{Na}}

Since any descendant of an asymmetric tree is asymmetric itself, we have that all asymmetric NIM trees that are descendants of \emph{asymmetric} molecules are given by the descendants of anything in $\mathcal{N}_a^*$, which is specified as simply $\textstyle\mathcal{N}^*_{a}\left[\mathcal{U}\mapsto \Seq(\mathcal{Z}),\mathcal{J}\mapsto\Seq(\mathcal{Z})\right]$.
The remaining asymmetric NIM trees must then be descendants of symmetric molecules. Work is made much simpler, however, by observing that asymmetric descendants of symmetric molecules are exactly the result of taking half of what remains after starting with the total number of ways to make NIM trees \emph{without} regards to symmetry and subtracting out the ways that result in symmetric trees, the latter of which we have already classified. In other words, we have the following specification for $\mathcal{N}_{a}$: 
\begin{align*}
   \textstyle 
   \mathcal{N}_{a}
   \cong
   \mathcal{N}^*_{a}\left[\mathcal{U}\mapsto \Seq(\mathcal{Z}),\mathcal{J}\mapsto\Seq(\mathcal{Z})\right]\nonumber
   &+
   \textstyle \frac{1}{2}\left(\mathcal{N}^*_{s,e,e}\left[\mathcal{U}\mapsto \Seq(\mathcal{Z}),\mathcal{J}\mapsto\Seq(\mathcal{Z})\right] -\mathcal{N}_{s,e,e}\right)\nonumber
   \\
   &+
   \textstyle\frac{1}{2}\left(\mathcal{N}^*_{s,e,o}\left[\mathcal{U}\mapsto \Seq(\mathcal{Z}),\mathcal{J}\mapsto\Seq(\mathcal{Z})\right]-\mathcal{N}_{s,e,o}\right)
   \\
   &+
   \textstyle\frac{1}{2}\left(\mathcal{N}^*_{s,o,e}\left[\mathcal{U}\mapsto \Seq(\mathcal{Z}),\mathcal{J}\mapsto\Seq(\mathcal{Z})\right]-\mathcal{N}_{s,o,e}\right)\nonumber.
\end{align*} This specification together with Equations \eqref{nimsymee}, \eqref{nimsymeo}, and \eqref{nimsymoe} above then translate to the below OGF for $\mathcal{N}_{a}$, which we denote by $N_a(z)$.
\begin{align}\label{nasym}
    \textstyle N_{a}(z)
    =\textstyle 
    N^*_{a}(1,\frac{1}{1-z},\frac{1}{1-z}) \nonumber
    &\textstyle +
    \frac{1}{2}
    \left(N^*_{s,e,e}(z,\frac{1}{1-z},\frac{1}{1-z})-N^*_{s,e,e}(z,\frac{1}{\sqrt{1-z^2}},\frac{1}{\sqrt{1-z^2}})\right) \nonumber
    \\
    &\textstyle +
    \frac{1}{2} \left(N^*_{s,e,o}(z,\frac{1}{1-z},\frac{1}{1-z})-\frac{\sqrt{1-z^2}}{1-z} N^*_{s,e,o}(z,\frac{1}{\sqrt{1-z^2}},\frac{1}{\sqrt{1-z^2}})\right) 
    \\
    &\textstyle +
    \frac{1}{2} \left(N^*_{s,o,e}(z,\frac{1}{1-z},\frac{1}{1-z})-\frac{\sqrt{1-z^2}}{1-z} N^*_{s,o,e}(z,\frac{1}{\sqrt{1-z^2}},\frac{1}{\sqrt{1-z^2}})\right). \nonumber
\end{align} 

\subsubsection{Final Result}
The symmetric NIM trees $\mathcal{N}_s$ and the asymmetric NIM trees $\mathcal{N}_a$ together partition all NIM trees $\mathcal{N}$, so $\mathcal{N}\cong\mathcal{N}_s+\mathcal{N}_a$. We then have that the OGF for $\mathcal{N}$, which we denote by $N(z)$, is given by the sum of \eqref{nsym} and \eqref{nasym}. Finally adding in the paths (including the singleton (which has OGF $\frac{z}{1-z}$) to the sum of $N_{s}(z)$ and $N_{a}(z)$, $N(z)$ simplifies as:
\begin{align}
 \textstyle N(z)=\frac{z}{1-z}+&\textstyle\frac{1}{2}N^*_{s,e,e}(z,\frac{1}{\sqrt{1-z^2}},\frac{1}{\sqrt{1-z^2}})+\frac{1}{2}N^*_{s,e,e}(z,\frac{1}{1-z},\frac{1}{1-z})\nonumber
 \\
 &+
 \textstyle\frac{\sqrt{1-z^2}}{2(1-z)}\textstyle N^*_{s,e,o}(z,\frac{1}{\sqrt{1-z^2}},\frac{1}{\sqrt{1-z^2}})+\frac{1}{2}N^*_{s,e,o}(z,\frac{1}{1-z},\frac{1}{1-z}) \label{nimgenfunctionugly}
 \\
 &+
 \textstyle\frac{\sqrt{1-z^2}}{2(1-z)}\textstyle N^*_{s,o,e}(z,\frac{1}{\sqrt{1-z^2}},\frac{1}{\sqrt{1-z^2}})+\frac{1}{2}N^*_{s,o,e}(z,\frac{1}{1-z},\frac{1}{1-z}),\nonumber
\end{align} 
from which coefficients can be extracted.

\subsubsection{Asymptotics}
We can employ singularity analysis (\cite{flajolet_sedgewick_2013}, Chapter IV) to study the asymptotic behavior of $N(n)$ and obtain asymptotic ratio of NIM trees to caterpillars. Let the combinatorial class $\mathcal{N}_{\text{ord}}$ be the class of ordered NIM trees, which are simply NIM trees but are distinguished by their orientation from left to right so that each asymmetric regular NIM tree corresponds to two separate ordered NIM trees. The class of symmetric NIM trees $\mathcal{N}_{\text{sym}}$ is asymptotically vanishing in $\mathcal{N}$, so $\lim_{n\to\infty}\frac{N(n)}{N_{\text{ord}}(n)}=\frac{1}{2}$.

\begin{theorem}\label{onehalf} 
$N(n)\underset{n\to\infty}{\sim}c^n$, where $c=1.8265...$.
\begin{proof} 
$\mathcal{N}_{\text{ord}}$ is specified as 
\begin{equation*}
    \mathcal{N}_{\text{ord}}\cong\Seq_{\geq 1}(\mathcal{Z})+\Seq_{\geq 1}\left(\left(\mathcal{A}\circ\left(\mathcal{Z},\Seq(\mathcal{Z}),\Seq(\mathcal{Z})\right)\right)/\mathcal{Z}\right),
\end{equation*}
which translates to the following OGF:
\begin{align*}
    N_{\text{ord}}(z)
    &=\frac{z}{1-z}+\frac{A(z,\frac{1}{1-z},\frac{1}{1-z})/z}{1-A(z,\frac{1}{1-z},\frac{1}{1-z})/z}
    =\frac{z \left(z^2+z-1\right) \left(z^3-2 z^2+z-1\right)}{1-3z+3z^2-2z^3+2z^5-z^6},
\end{align*}
where $\frac{z}{1-z}$ accounts for the singleton and the simple paths. Applying the methods of singularity analysis (see Chapter IV in \cite{flajolet_sedgewick_2013}), it follows since the smallest pole in absolute value is the solution $\rho$ to the equation $1-3z+3z^2-2z^3+2z^5-z^6= 0$ with $\rho=0.54749048...$, that $N_{\text{ord}}(n)\sim (1/\rho)^n=(1.8265...)^n$, so since $\lim_{n\to\infty}\frac{N(n)}{N_{\text{ord}}(n)}=\frac{1}{2}$ the result follows. 
\end{proof}
\end{theorem}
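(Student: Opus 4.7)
The plan is to derive a closed-form OGF for $\mathcal{N}_{\text{ord}}$, locate its dominant pole, apply meromorphic singularity analysis, and then transfer the asymptotics to $N(n)$ via the limit $N(n)/N_{\text{ord}}(n)\to\tfrac12$ stated at the start of the proof.

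First I would translate the given specification for $\mathcal{N}_{\text{ord}}$ into an OGF. An ordered NIM tree is either a simple path (contributing $\frac{z}{1-z}$) or a nonempty ordered vertex-concatenated sequence of atoms along the central path, in which each variable HDV ($\mathcal{U}$) and each bridge point ($\mathcal{J}$) is independently filled by a (possibly empty) stack of pendants or a bridge extension, both given by $\Seq(\mathcal{Z})=\frac{1}{1-z}$. Vertex-concatenation shares one vertex between consecutive atoms, which is what produces the factor $1/z$ on each copy inside the outer $\Seq_{\geq 1}$. Substituting the closed form for $A(z,u,r)$ derived earlier with $u=r=\tfrac{1}{1-z}$ into $\frac{A/z}{1-A/z}$ and simplifying, I expect the fractions to collapse into the rational expression $\frac{z(z^2+z-1)(z^3-2z^2+z-1)}{1-3z+3z^2-2z^3+2z^5-z^6}$ displayed in the theorem.

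Second I would analyze the singularities of $N_{\text{ord}}(z)$. Writing $D(z)=1-3z+3z^2-2z^3+2z^5-z^6$, I would certify by numerical root-isolation (for instance, via Sturm sequences applied to $D(z)D(-z)$, which is a polynomial in $z^2$) that $\rho\approx 0.54749048$ is the unique root of $D$ of minimum modulus and that the numerator $z(z^2+z-1)(z^3-2z^2+z-1)$ does not vanish there, so the dominant singularity is a simple pole. The standard transfer theorem for rational (in fact, meromorphic) functions (Theorem IV.9 of \cite{flajolet_sedgewick_2013}) then yields $N_{\text{ord}}(n)\sim C\,\rho^{-n}$ for some $C>0$, with $1/\rho=1.8265\ldots$.

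Finally I would promote this estimate to $N(n)$ using $\lim_{n\to\infty}N(n)/N_{\text{ord}}(n)=\tfrac12$. A symmetric NIM tree is essentially determined by one half of its central path, so each of the component OGFs $N^*_{s,e,e}$, $N^*_{s,e,o}$, $N^*_{s,o,e}$ appearing in \eqref{nimgenfunctionugly} is, after the $\frac{1}{\sqrt{1-z^2}}$-substitutions, a function of $z^2$ (up to the mild $\sqrt{1-z^2}/(1-z)$ prefactors), which pushes the dominant singularity of $N_s$ out to $\sqrt{\rho}>\rho$. Hence $N_s(n)=O(\rho^{-n/2})$ is exponentially subdominant, and since $N_{\text{ord}}(n)=2N_a(n)+N_s(n)$ while $N(n)=N_a(n)+N_s(n)$, dividing yields the claimed ratio. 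Combining, $N(n)\sim\tfrac{C}{2}\rho^{-n}$, so $N(n)$ has exponential growth rate $c=1/\rho=1.8265\ldots$ as claimed. The main obstacle is the certification that $\rho$ is the \emph{unique} dominant zero of the sextic $D$---no other root of equal or smaller modulus, and no cancellation with the numerator---a finite but delicate verification since $D$ admits no obvious rational factorization and one must rule out conjugate complex poles of matching modulus by exact polynomial arithmetic on the reciprocal polynomial.
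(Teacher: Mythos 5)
Your proposal matches the paper's argument: translate the $\mathcal{N}_{\text{ord}}$ specification into a rational OGF, locate the dominant pole $\rho$ of the sextic denominator, apply meromorphic transfer, and halve the result using $N(n)/N_{\text{ord}}(n)\to\tfrac12$. You in fact supply more rigor than the paper at two points---a concrete plan (Sturm sequences, checking the numerator) to certify $\rho$ is the unique dominant pole, and an explicit justification that $N_s$ is exponentially subdominant so the $\tfrac12$ limit holds---so the route is identical and the execution is, if anything, more careful.
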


\begin{corollary} Let $C(n)$ denote the number of caterpillar graphs on $n$ vertices. Then $\lim_{n\to\infty}\frac{N_{\text{ord}}(n)}{C(n)}=0$.
\begin{proof} 
$C(n)=2^{n-4}+2^{\lfloor (n-4)/2\rfloor}$ is known from \cite{FrankSchwenk}, so the result is immediate by Theorem \ref{onehalf}.
\end{proof}
\end{corollary}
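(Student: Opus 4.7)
The plan is to compare the exponential growth rates of $N_{\text{ord}}(n)$ and $C(n)$. Theorem~\ref{onehalf} already gives $N_{\text{ord}}(n) \sim (1/\rho)^n = (1.8265\ldots)^n$, where $\rho \approx 0.5475$ is the smallest positive root of $1 - 3z + 3z^2 - 2z^3 + 2z^5 - z^6$, so all that is needed on the caterpillar side is an asymptotic growth rate for $C(n)$.

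For $C(n)$ I would simply invoke the classical closed-form count of Harary and Schwenk, which states that $C(n) = 2^{n-4} + 2^{\lfloor (n-4)/2 \rfloor}$ for $n \geq 3$. The second term is square-root smaller than the first, so $C(n) \sim 2^{n-4}$ and in particular $C(n)$ grows like $2^n$ up to a constant factor.

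Putting the two asymptotics together, the ratio $N_{\text{ord}}(n)/C(n)$ behaves like a constant times $(1.8265\ldots/2)^n$, which tends to $0$ geometrically fast. The one small numerical verification needed is that $1/\rho < 2$, equivalently that $\rho > 1/2$; this is checked by evaluating the singularity polynomial at $z = 1/2$, where it takes the value $1 - \tfrac{3}{2} + \tfrac{3}{4} - \tfrac{1}{4} + \tfrac{1}{16} - \tfrac{1}{64} = \tfrac{3}{64} > 0$, so by continuity the smallest positive root must exceed $1/2$.

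There is no real obstacle in this argument; once Theorem~\ref{onehalf} and the Harary--Schwenk formula are in hand, the corollary is a one-line asymptotic comparison. The only ``subtle'' point worth mentioning explicitly in the write-up is that we are comparing $N_{\text{ord}}$ (the ordered count, for which the asymptotic base $1/\rho$ was proved) rather than $N$ itself to $C(n)$, which is exactly what the statement asks for; the factor of $\tfrac{1}{2}$ relating $N(n)$ and $N_{\text{ord}}(n)$ asymptotically is immaterial since both are still of order $(1.8265\ldots)^n$ and hence negligible compared to $2^n$.
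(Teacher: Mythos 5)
Your argument is correct and follows the same route as the paper: invoke the Harary--Schwenk closed form $C(n)=2^{n-4}+2^{\lfloor(n-4)/2\rfloor}$ and compare growth rates using Theorem~\ref{onehalf}. The paper states this in one line; your version simply spells out the ratio comparison and adds the (reasonable, if slightly redundant given the stated numerical value of $\rho$) sanity check that $1/\rho<2$ by evaluating the denominator polynomial at $z=1/2$.
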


Computation analysis yields the following conjecture, which has been checked up to $n=300$:

\begin{conjecture}\label{nimrecurrence} $N(n)$, the number of NIM trees on $n$ vertices, satisfies the following 15-depth linear recurrence for $n\geq 16$.
\begin{align}
    N(n)=& \  2N(n - 1) + N(n - 2) - 3 N(n - 3)
    +2 N(n - 4) -N(n - 5)\nonumber
    \\
    &- 2 N(n - 6)+ N(n - 7) +3 N(n - 8) - 4 N(n - 9) - N(n - 10)
    \\
    &+ 2 N(n - 11) - 2 N(n - 12)+2 N(n - 13) + N(n - 14) - N(n - 15).\nonumber
\end{align}
\end{conjecture}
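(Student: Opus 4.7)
The plan is to show that $N(z)$ is a rational function of $z$ whose denominator in lowest terms divides the polynomial
\begin{equation*}
Q(z) = 1 - 2z - z^2 + 3z^3 - 2z^4 + z^5 + 2z^6 - z^7 - 3z^8 + 4z^9 + z^{10} - 2z^{11} + 2z^{12} - 2z^{13} - z^{14} + z^{15},
\end{equation*}
whose coefficients encode the recurrence in Conjecture \ref{nimrecurrence}. Via the standard correspondence between rational OGFs and constant-coefficient linear recurrences, any representation $N(z) = P(z)/Q(z)$ forces $N(n)$ to satisfy the recurrence for all sufficiently large $n$, and the explicit bound $n \geq 16$ is then pinned down by direct comparison with the initial values of $N(n)$ recorded in Figure \ref{fig:nimTable}.

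The crucial step is establishing rationality of $N(z)$. I would use the decomposition $N(n) = \frac{1}{2}(N_{\text{ord}}(n) + N_s(n))$, where $N_{\text{ord}}$ is the ordered NIM-tree count (each asymmetric NIM tree contributing two ordered versions and each symmetric one contributing one). Rationality of $N_{\text{ord}}(z)$, with denominator $1 - 3z + 3z^2 - 2z^3 + 2z^5 - z^6$, is already obtained in the course of Theorem \ref{onehalf}. For $N_s(z)$, formula \eqref{nsym} depends on the scaling functional equation \eqref{ns}, which is not manifestly rational; to bypass this, I would re-specify symmetric NIM trees combinatorially by their \emph{left half} about the center of the central path, together with a finite menu of central gadgets (central bridge, central HDV with pendants, or central atom, matching the trichotomy $\mathcal{N}_{s,e,e}, \mathcal{N}_{s,e,o}, \mathcal{N}_{s,o,e}$). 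The class $\mathcal{H}$ of left halves admits a specification using only $\Seq$ and disjoint-union constructions in $z$ alone, with no $\*x \mapsto \*x^2$ scaling, so its OGF $H(z)$ is rational with some denominator $Q_H(z)$. Then $N_s(z) = \sum_i C_i(z)\, H(z^2)$, with one summand per central gadget, is rational in $z$ with denominator dividing $Q_H(z^2)$ times polynomial factors from the $C_i$.

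With both $N_{\text{ord}}(z)$ and $N_s(z)$ rational, the final step is to form $N(z)$, reduce to lowest terms, and verify the denominator equals $Q(z)$. Plausibly $\deg Q_H = 6$ by the same central-path atom structure underlying $N_{\text{ord}}$, so $\deg Q_H(z^2) = 12$; combined with the degree-$6$ denominator of $N_{\text{ord}}$, a shared polynomial factor of degree $3$ between $Q_H(z^2)$ and $1 - 3z + 3z^2 - 2z^3 + 2z^5 - z^6$ would produce $\deg Q = 15$ as conjectured. I expect the main obstacle to be the symbolic bookkeeping: explicitly writing down the left-half specification with the correct boundary conditions for each central-gadget case, combining everything into lowest terms, and confirming the resulting denominator polynomial. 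This looks like a tractable but tedious computer-algebra task rather than a deep conceptual hurdle; afterwards, a direct comparison of the first $30$ or so coefficients of $P(z)/Q(z)$ with Figure \ref{fig:nimTable} closes the argument.
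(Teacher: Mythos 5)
First, note that the paper does not prove this statement: it is presented explicitly as a \emph{conjecture}, supported only by a computational check up to $n=300$. So there is no proof in the paper to compare yours against; a correct argument here would genuinely strengthen the paper. Your overall strategy --- establish that $N(z)$ is a rational function whose reduced denominator divides $Q(z)$, then fix the starting index $n\geq 16$ by comparing finitely many coefficients --- is the right one, and your two key ideas (the decomposition $N(n)=\tfrac{1}{2}\left(N_{\mathrm{ord}}(n)+N_s(n)\right)$, and re-specifying symmetric NIM trees by a ``left half'' plus a finite menu of central gadgets so as to bypass the non-rational-looking $\mathbf{x}\mapsto\mathbf{x}^2$ functional equation \eqref{ns}) are both sound and are the natural route to rationality.

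However, as written this is a plan rather than a proof, and the one piece of concrete evidence you offer that the plan terminates in the conjectured $Q(z)$ is wrong. You predict that $Q(z)$ arises as essentially the least common multiple of the denominator $D(z)=1-3z+3z^2-2z^3+2z^5-z^6$ of $N_{\mathrm{ord}}(z)$ and a degree-$12$ polynomial $Q_H(z^2)$, sharing a degree-$3$ factor; that accounting presupposes $D(z)\mid Q(z)$. But $D(z)$ does not divide $Q(z)$: indeed $D(1)=1-3+3-2+2-1=0$ while $Q(1)=1$, so not even the factor $(1-z)$ of $D(z)$ --- which enters $N_{\mathrm{ord}}(z)$ through the $\tfrac{z}{1-z}$ path term --- survives into $Q(z)$. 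Hence if the conjecture is true, the pole of $\tfrac12 N_{\mathrm{ord}}(z)$ at $z=1$ (and possibly further factors of $D$) must cancel exactly against matching poles of $\tfrac12 N_s(z)$, and your proposal neither notices nor accounts for this cancellation. The gap is therefore twofold: the class $\mathcal{H}$ of left halves and its OGF are never actually constructed, so rationality of $N_s(z)$ is asserted rather than proved; and the identification of the reduced denominator of $N(z)$ with a divisor of $Q(z)$ --- which is the entire content of the conjecture --- is deferred to an unexecuted computation whose expected shape you have misjudged. Until that computation is carried out and the cancellations verified, the statement remains a conjecture.
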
 

\bibliographystyle{plain}
\bibliography{bibliography.bib}

\end{document}